\newcommand{\old}[1]{}
\theoremstyle{plain}
\newtheorem{thm}{Theorem}[section]
\newtheorem{lem}[thm]{Lemma}
\newtheorem{conj}{Conjecture}
\newtheorem{cor}[thm]{Corollary}
\newtheorem{prop}[thm]{Proposition}
\theoremstyle{definition}
\newtheorem{defn}[thm]{Definition}
\newtheorem{ex}[thm]{Example}
\newtheorem{qn}[thm]{Question}
\numberwithin{equation}{section}
\title{On a curious variant of the $S_n$-module $Lie_n$}
\author{Sheila Sundaram}
\address{Pierrepont School, One Sylvan Road North, Westport, CT 06880}
\email{shsund@comcast.net}
\date{\today}
\subjclass[2010]{05E10, 20C30, 52B30}
\begin{document}
\begin{abstract} We introduce a variant of the much-studied $Lie$ representation of the symmetric group $S_n$, which we denote by $Lie_n^{(2)}.$ Our variant gives rise to a decomposition of the regular representation  as a sum of {exterior} powers of modules $Lie_n^{(2)}.$ This is in contrast to the theorems of Poincar\'e-Birkhoff-Witt and Thrall which decompose the regular representation into a sum of symmetrised $Lie$ modules. We show that nearly every known property of $Lie_n$ has a counterpart for the module $Lie_n^{(2)},$ suggesting connections to the cohomology of configuration spaces via the character formulas of Sundaram and Welker, to the Eulerian idempotents of Gerstenhaber and Schack, and to the Hodge decomposition of the complex of injective words arising from  Hochschild homology, due to  Hanlon and Hersh.

\emph{Keywords:}   Configuration space, Thrall, higher Lie module, Poincar\'e-Birkhoff-Witt, Schur positivity, symmetric power, exterior power, plethysm.
\end{abstract}
\maketitle

\section{Introduction}
In this paper we present 
 the unexpected discovery, announced in \cite{SuFPSAC2018}, 
 of a curious variant of  the $S_n$-module    
 $Lie_n$ afforded by the multilinear component of the free Lie algebra with $n$ generators. The theorems of Poincar\'e-Birkhoff-Witt  and Thrall (see, e.g. \cite{R}) state that the universal enveloping algebra of the free Lie algebra is the symmetric algebra over the free Lie algebra, and hence coincides with the full tensor algebra. This is equivalent, via Schur-Weyl duality, to Thrall's decomposition of the regular representation into a sum of symmetric powers of the representations $Lie_n.$ By contrast, here we obtain a decomposition of the regular representation as a sum of exterior powers of modules (Theorem \ref{ExtReg}).  The key ingredient is our variant of $Lie_n,$ an $S_n$-module that we denote by $Lie_n^{(2)},$ which turns out to possess remarkable properties akin to those of  $Lie_n.$  Our results (see Theorems \ref{Compare},  
 \ref{LehrerHLLieSup2}, \ref{EquivLieSup2}) 
 bear a striking resemblance to properties of the Whitney homology of the partition lattice (and hence the Orlik-Solomon algebra for the root system $A_n$),  and to the computation of the cohomology of  the configuration space for the braid arrangement found in \cite[Theorem 4.4]{SW}.
 In particular these properties indicate the possibility of an 
underlying algebra structure for $Lie_n^{(2)}$ involving an acyclic complex. Theorem \ref{EquivLieSup2} furthers this analogy;  we show that $Lie_n^{(2)}$ admits a filtration close to the one arising from the derived series of the free Lie algebra.  There is an interesting  action on derangements  arising from $Lie_n^{(2)}$ as well (Theorem \ref{HodgeFilt}); we prove that $Lie_n^{(2)}$ gives rise to a new decomposition of the homology of the complex of injective words studied by Reiner and Webb \cite{RW}, one that is different from the Hodge decomposition of Hanlon and Hersh \cite{HH}. These results are collected in Section 2,  showing that for every well-known property of  $Lie_n,$  the representation $Lie_n^{(2)}$  offers an interesting counterpart.

A characteristic feature of the  complement of the $A_{n-1}$-hyperplane arrangement in complex space, and hence the configuration spaces associated to the braid arrangement, is that the cohomology ring has the structure of a symmetric or exterior algebra over the top  cohomology as an $S_n$-module  (see Theorem \ref{SWThm4.4} and Theorem \ref{Compare}).   Moreover this top cohomology is  $Lie_n$ or its sign-tensored version, and thus its character values are supported on a specific class of permutations: those whose cycles all have the same length.  

The higher Lie modules, first defined in \cite{T}, figure prominently in all the situations mentioned above, and hence the language of symmetric functions and plethysm is ideal for describing the results.  This is  precisely the framework of the symmetric function identities  developed in \cite{Su1};  these crucial identities are described in Section~\ref{SecMetaThms}, where we  state the key result from \cite{Su1}, Theorem~\ref{metathm}, and compile a toolkit  that has proved useful in manipulating plethysms arising from homology representations.  One interesting consequence  is a fact that does not appear to have been previously observed, namely the \textit{equivalence} of all  the known representation-theoretic properties of $Lie$ (the formulas of Thrall and Cadogan, the filtration arising from the derived series, the appearance of the $Lie$ character in the action on derangements).   This is explained in Theorem \ref{EquivPBW}. 

The module $Lie_n^{(2)}$ is a special case of a family of variations of $Lie_n,$ whose discovery arose from  the investigation begun in \cite{Su1} on the positivity of row sums in the character table of $S_n$. 
Indeed, the symmetrised powers of $Lie_n^{(2)}$ itself give the representation obtained by taking row sums  for the  subset of conjugacy classes corresponding to cycles whose lengths are a power of 2 (Theorem~\ref{SymExt}). The more general results were announced in \cite{SuFPSAC2018} and \cite{SuFPSAC2019}, and will be the subject of  a separate paper.

\subsection{Preliminaries}
We follow \cite{M} and \cite{St4EC2} for notation regarding symmetric functions.  In particular, $h_n,$ $e_n$ and $p_n$ denote respectively the complete homogeneous, elementary and power sum symmetric functions.  If $\mathrm{ch}$ is the Frobenius characteristic map from the representation ring of the symmetric group $S_n$ to the ring of symmetric functions with real coefficients, then 
$h_n=\mathrm{ch}(1_{S_n})$ is the characteristic of the trivial representation, and $e_n=\mathrm{ch}({\rm sgn}_{S_n})$ is the characteristic of the sign representation of $S_n.$   If $\mu$ is a partition of $n$ then 
define $p_\mu =\prod_i p_{\mu_i};$ $h_\mu$ and $e_\mu$ are defined  in similar multiplicative fashion.  
The Schur function $s_\mu$  indexed by the partition $\mu$ is the Frobenius characteristic of the $S_n$-irreducible indexed by $\mu.$  
  Finally,  the involution $\omega$ 
   takes $h_n$ to $e_n$ corresponding to tensoring with the sign  representation.

If $q$ and $r$ are characteristics of representations of $S_m$ and $S_n$ respectively, they yield a representation of the wreath product $S_m[S_n]$ in a natural way, with the property that when this representation is induced up to $S_{mn},$ its Frobenius characteristic is the plethysm $q[r].$ For more background about this operation, see \cite{M}.  We will make extensive use of the properties of this operation, in particular the fact that plethysm with a symmetric function $r$ is an endomorphism on the ring of symmetric functions \cite[(8.3)]{M}.  See also \cite[Chapter 7, Appendix 2, A2.6]{St4EC2}.

Define \begin{align}\label{defHE} &H(t)=\sum_{i\geq 0}t^i  h_i=\exp \sum_{i\ge 1}\frac{t^ip_i}{i}, \quad E(t) = \sum_{i\geq 0} t^i  e_i=\exp \sum_{i\ge 1}(-1)^{i-1}\frac{t^ip_i}{i}; \\
&H=\sum_{i\geq 0}  h_i, \quad E= \sum_{i\geq 0}  e_i; \quad H^{\pm}=\sum_{r\geq 0} (-1)^r h_r, \quad E^{\pm}=\sum_{r\geq 0} (-1)^r e_r. 
\end{align} 
Now let $\{q_i\}_{i\geq 1}$ be a sequence of symmetric functions, each $q_i$ homogeneous of degree $i.$  
Let $Q=\sum_{i\geq 1}q_i,$  $Q(t)=\sum_{n\geq 1} t^n q_n.$
For each partition $\lambda$ of $n\geq 1$  with $m_i(\lambda)=m_i$ parts equal to $i\geq 1,$ let $|\lambda|=n = \sum_{i\geq 1} i m_i$ be the size of $\lambda,$ and 
$\ell(\lambda)=\sum_{i\geq 1} m_i(\lambda)=\sum_{i\geq 1} m_i$ be the length (total number of parts) of $\lambda.$ 

Define 
\begin{equation}\label{HigherQ}
 H_\lambda[Q]=\prod_{i:m_i(\lambda)\geq 1} h_{m_i}[q_i],\qquad  \qquad E_\lambda[Q]=\prod_{i:m_i(\lambda)\geq 1} e_{m_i}[q_i].
 \end{equation}

For the empty partition (of zero) we define $H_\emptyset [Q]=1=
E_\emptyset[Q]=H^{\pm}_\emptyset [Q]=
E^{\pm}_\emptyset[Q].$

 Consider the generating functions 
$H[Q](t)$ and $E[Q](t).$ 
With the convention that $Par$, the set of all partitions of nonnegative integers,
 includes the unique empty partition of zero,  by the preceding observations and standard properties of plethysm \cite{M} we have 
 
 \begin{equation}\label{defhrofQ}h_r[Q]|_{{\rm deg\ }n}=\sum_{\stackrel {\lambda\vdash n}{\ell(\lambda)=r}} H_\lambda[Q], 
 \qquad \text{and } \quad 
 e_r[Q]|_{{\rm deg\ }n}=\sum_{\stackrel{\lambda\vdash n}{\ell(\lambda)=r}}E_\lambda[Q];
 \end{equation}

$$H[Q](t)=\sum_{\lambda\in Par} t^{|\lambda|} H_\lambda[Q], \qquad \text{and } \quad E[Q](t)=\sum_{\lambda\in Par} t^{|\lambda|} E_\lambda[Q].$$

Also write $Q^{alt}(t)$ for the alternating sum $\sum_{n\geq 1} (-1)^{i-1} t^i q_i = t q_1-t^2 q_2+t^3 q_3-\ldots.$  

Let $\psi(n)$ be any real-valued function defined on the positive integers. 
Define symmetric functions $f_n$ by 
 \begin{equation}\label{definef_n}f_n = \dfrac{1}{n} \sum_{d|n} \psi(d) p_d^{\frac{n}{d}},
\text{ so that }
\omega(f_n) =  \dfrac{1}{n} \sum_{d|n} \psi(d) (-1)^{n-\frac{n}{d}} p_d^{\frac{n}{d}}.\end{equation}
Note that,  when $\psi(1)$ is a positive integer,  this makes $f_n$ the Frobenius characteristic of a possibly virtual $S_n$-module whose dimension is $(n-1)!\psi(1).$

Finally, define the associated polynomial in one variable, $t,$ by
\begin{equation}\label{definepolyf_n}
f_n(t) =\dfrac{1}{n} \sum_{d|n} \psi(d) t^{\frac{n}{d}}.
\end{equation}

\section{A comparison of $Lie_n$ and the variant $Lie_n^{(2)}$}

In this section we define  the $S_n$-module $Lie_n^{(2)}$ and describe some of its remarkable properties.  The goal here is to  analyse this module by  interpreting the plethystic identities it satisfies in an interesting representation-theoretic and homological context.  The  properties are established using plethystic symmetric function techniques applied to the Frobenius characteristic of $Lie_n^{(2)}$;  we have relegated the technical details of the proofs to the next section.        
 The present section has been written to be self-contained.

Recall \cite{R} that the $S_n$-module $Lie_n$ is the action of $S_n$ on the multilinear component of the free Lie algebra, and coincides with the  induced representation $\exp(\frac{2i\pi}{n})\uparrow_{C_n}^{S_n},$ where $C_n$ is the cyclic group generated by an $n$-cycle in $S_n$.  Its Frobenius characteristic is obtained by taking $\psi(d)=\mu(d)$ (the number-theoretic M\"obius function) in equation~(\ref{definef_n}).

Another module that will be of interest is the $S_n$-module $C\!onj_n$ afforded by the conjugacy action of $S_n$ on the class of $n$-cycles.  Clearly we have $C\!onj_n\simeq {\mathbf 1}\uparrow_{C_n}^{S_n}.$  Its Frobenius characteristic is obtained by taking $\psi(d)=\phi(d)$ (Euler's totient function) in equation~(\ref{definef_n}).

\begin{defn}\label{defLieSup2} Let $k_n$ be the highest power of 2 dividing $n.$ Define $Lie_n^{(2)}$ to be the induced module 
$$\exp(\tfrac{2i\pi}{n}\cdot 2^{k_n})\uparrow_{C_n}^{S_n}.$$
\end{defn}

The first two of the following facts are now immediate.  $Lie_n^{(2)}$ is $S_n$-isomorphic to 
\begin{itemize} 
\item   $ Lie_n$ if $n$ is odd;
\item $C\!onj_n$ if $n$ is a power of 2;
\item $Lie_n\otimes {\bf sgn}_{S_n}$ if $n$ is twice an odd number.
\end{itemize}

The third fact follows, for example, by first establishing the isomorphism 
$${\bf sgn}_{S_n}\otimes \chi\uparrow_{C_n}^{S_n}\simeq ({\bf sgn}^{n-1}_{C_n}\otimes \chi)\uparrow_{C_n}^{S_n}.$$
A different proof  is given in Theorem~\ref{LietoLiesup2}.

   Since  it is most convenient to use the language of symmetric functions,  we will often abuse notation and use $Lie_n$ and $Lie_n^{(2)}$ to mean both the module and its Frobenius characteristic.
   
 We write $Lie$ for the sum of symmetric functions $\sum_{n\geq 1} Lie_n$ and $Lie^{(2)}$ for the sum 
 $\sum_{n\geq 1} Lie_n^{(2)}.$
Recall from Section 1.1 that we define,  for each partition $\lambda$ of $n\geq 1$  with $m_i(\lambda)=m_i$ parts equal to $i,$
 $H_\lambda[Q]=\prod_{i:m_i(\lambda)\geq 1} h_{m_i}[q_i]$ and $E_\lambda[Q]=\prod_{i:m_i(\lambda)\geq 1} e_{m_i}[q_i];$  see also equation (\ref{defhrofQ}).
 Finally, recall that $p_1^n=h_1^n=e_1^n$ is the Frobenius characteristic of the regular representation ${\mathbf 1}\uparrow_{S_1}^{S_n}$ of $S_n.$
 
 The $H_\lambda[Lie]$ are the (Frobenius characteristics of) the higher Lie modules appearing in Thrall's decomposition of the regular representation (see below  for more details).  
 We denote the wreath product of $S_a$ with $a$ copies of $S_b$ by $S_a[S_b];$ explicitly it is the normaliser of the direct product 
 $\underbrace{S_b\times \ldots \times S_b}_{a}$  in $S_{ab}.$ 
 Given representations $V_a$ and $V_b$  of $S_a, S_b,$ respectively, 
there is an obvious associated representation $V_a[V_b]$ of the wreath product $S_a[S_b]$, whose Frobenius characteristic is given by the plethysm ${\rm ch\,}V_a[{\rm ch\,}V_b].$  The higher Lie module 
$H_\lambda[Lie],$ for a partition $\lambda$ of $n$ with 
$m_i$ parts equal to $i,$  is the characteristic of  the induced representation 
\begin{equation*}{\Large\otimes}_i {\bf 1}_{S_{m_i}}[Lie_i]
{\large\uparrow}_{\prod_i S_{m_i}[S_i]}^{S_n}.\end{equation*}

If $X$ is any topological space, then the ordered configuration space $C\!on\! f_n \, X$ of $n$ distinct points in $X$ is defined to be the set 
$\{(x_1,\ldots, x_n): i\neq j \Longrightarrow x_i\neq x_j\}.$ The symmetric group $S_n$ acts on $C\!on\! f_n\, X$ by permuting coordinates, and hence induces an action on the cohomology $H^k(C\!on\! f_n\,  X, {\mathbb Q}), k\geq 0.$  

\begin{thm}\label{SWThm4.4}\cite[Theorem 4.4, Corollary 4.5]{SW} For all $d\geq 1, $  and $0\leq k\leq n-1,$ the Frobenius characteristic of 
\begin{itemize}
\item 
$H^k( C\!on\! f_n\, {\mathbb R}^{2},{\mathbb Q} ) \simeq H^{(2d-1)k}(C\!on\! f_n\, {\mathbb R}^{2d},{\mathbb Q})$ is   
$\omega\left(e_{n-k}[Lie]|_{\text{deg }n} \right).$
\item
$H^{2k}( C\!on\! f_n\, {\mathbb R}^{3} ,{\mathbb Q}  ) \simeq H^{2dk}(C\!on\! f_n\, {\mathbb R}^{2d+1}, {\mathbb Q})$   is 
$
h_{n-k}[Lie]|_{\text{deg }n} .$
\end{itemize}
The cohomology vanishes in all other degrees.

When $d=1,$ $H^{0}( C\!on\! f_n\, {\mathbb R} ,{\mathbb Q}  )$ carries the regular representation of $S_n.$
\end{thm}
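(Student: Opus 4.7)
My plan is to apply the $S_n$-equivariant Goresky--MacPherson formula to the diagonal subspace arrangement $\{x_i=x_j\}$ in $(\mathbb{R}^m)^n$, whose complement is $C\!on\!f_n\,\mathbb{R}^m$, and then to expand the resulting sum over the partition lattice using plethystic identities.  The intersection lattice of this arrangement is the partition lattice $\Pi_n$, with the subspace $V_\pi$ indexed by a set partition $\pi$ having codimension $m(n-b(\pi))$, where $b(\pi)$ is the number of blocks.  The equivariant Goresky--MacPherson formula then yields
\[
\mathrm{ch}\,\tilde H^{\,i}(C\!on\!f_n\,\mathbb{R}^m;\mathbb{Q}) \;=\; \sum_{\pi>\hat 0}\,\mathrm{ch}\,\tilde H_{m(n-b(\pi))-i-2}\bigl(\Delta(\hat 0,\pi);\mathbb{Q}\bigr),
\]
with an equivariant orientation twist on each summand that depends on the parity of $m$.

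Since each interval $(\hat 0,\pi)$ is isomorphic to the proper part of $\prod_j \Pi_{|B_j|}$ and is Cohen--Macaulay, its reduced homology is concentrated in the single top degree $n-b(\pi)-2$.  This immediately forces $H^i$ to vanish unless $i=(m-1)(n-b(\pi))=(m-1)k$, which matches the constraints $(2d-1)k$ and $2dk$ in the statement.  The equivariant structure on this top homology is computed by combining K\"unneth with the classical Joyal--Klyachko--Stanley identity $\mathrm{ch}\,\tilde H_{i-3}(\bar\Pi_i)=\omega(Lie_i)$: summing over all $\pi$ of a fixed type $\lambda=(1^{m_1}2^{m_2}\cdots)$ and inducing from $\mathrm{Stab}(\pi)=\prod_i S_{m_i}[S_i]$ up to $S_n$ produces, after the orientation twist, the plethystic product $H_\lambda[Lie]$ in the odd-$m$ case and $\omega(E_\lambda[Lie])$ in the even-$m$ case.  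Summing over all $\lambda\vdash n$ with $\ell(\lambda)=n-k$ and invoking~(\ref{defhrofQ}) collapses the sum to $h_{n-k}[Lie]|_{\deg n}$ or $\omega(e_{n-k}[Lie]|_{\deg n})$ respectively.  The periodic isomorphisms $H^k(C\!on\!f_n\,\mathbb{R}^2)\simeq H^{(2d-1)k}(C\!on\!f_n\,\mathbb{R}^{2d})$ and their odd-dimensional analogues then come for free, since the Goresky--MacPherson summands are independent of $d$; only the cohomological degree shift varies with the ambient dimension.

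The main obstacle is careful bookkeeping of signs.  The parity dichotomy of the statement arises from the interplay between the orientation character $\mathrm{sgn}^m$ picked up by $V_\pi$ under block-permutation and the K\"unneth shuffle signs in the product poset (each depending on the parity of the block sizes); these conspire to convert $h$-plethysms into $\omega$ of $e$-plethysms precisely when $m$ is even.  The final assertion that $H^0(C\!on\!f_n\,\mathbb{R};\mathbb{Q})$ carries the regular representation of $S_n$ is elementary, and bypasses Goresky--MacPherson entirely: $C\!on\!f_n\,\mathbb{R}$ is the disjoint union of $n!$ contractible components indexed by linear orderings of the $n$ points, and $S_n$ acts on this set of orderings by the regular representation.
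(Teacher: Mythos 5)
The paper itself does not prove this theorem; it is quoted from Sundaram--Welker~\cite{SW}, and your sketch faithfully reproduces the proof given there: the $S_n$-equivariant Goresky--MacPherson formula over the diagonal arrangement in $(\mathbb{R}^m)^n$, Cohen--Macaulayness of the intervals of $\Pi_n$ concentrating homology in the single degree $(m-1)(n-b(\pi))$, and the identification $\mathrm{ch}\,\tilde H_{n-3}(\bar\Pi_n)=\omega(Lie_n)$ fed through the K\"unneth decomposition of the product intervals and induction from $\prod_i S_{m_i}[S_i]$. Your handling of the $d=1$ case as the permutation action of $S_n$ on the $n!$ contractible components of $C\!on\!f_n\,\mathbb{R}$ is correct, and you rightly flag the orientation and shuffle-sign bookkeeping as the delicate step producing the $h$ versus $\omega(e)$ dichotomy, which is exactly where the bulk of the work in~\cite{SW} lies.
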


We will use the cohomology of $ C\!on\! f_n\, {\mathbb R}^{2} $ as the prototype for the configuration spaces of even-dimensional Euclidean space, and  $ C\!on\! f_n\, {\mathbb R}^{3} $ as the prototype for 
the configuration spaces of odd-dimensional Euclidean space.  Note that cohomology is concentrated in all degrees in the former (more generally in all multiples of $(2d-1)$ for $2d$-dimensional space), and only in even degrees in the latter.

The results of this section will show that the representation $Lie_n^{(2)}$ has properties  curiously parallelling those of $Lie_n.$ 
Theorem \ref{SWThm4.4} above states the ``Lie" identities of
Theorem \ref{Compare} below in the context of the configuration spaces of $X={\mathbb R}^2$ and $X={\mathbb R}^3.$   The module $Lie_n$ arises as the highest nonvanishing cohomology for the configuration space of ${\mathbb R}^d, d$ odd, and when tensored with the sign, as the highest nonvanishing cohomology for the configuration space of ${\mathbb R}^d, d$ even.  This is the classically known 
prototype;  the variant $Lie_n^{(2)}$ will be shown to closely follow its example.  The higher Lie module $H_\lambda[Lie]$ dates back to \cite{T}, and has been studied by several authors in the recent literature.
Note the appearance of the \lq\lq higher $Lie^{(2)}$ modules" below. See  also Theorem~\ref{EquivPBW} for the list of $Lie$ identities.
\begin{thm}\label{Compare}  The  symmetric function $Lie_n^{(2)}$ satisfies the following plethystic identities, analogous to $Lie_n$.

\begin{equation}\label{SymExt}
\sum_{\lambda\vdash n}H_\lambda[Lie]=p_1^n; \qquad\qquad \sum_{\lambda\vdash n}E_\lambda[Lie^{(2)}]=p_1^n;
\end{equation}
\begin{equation}\label{PlInvHE}
H[\sum_{n\geq 1} (-1)^{n-1}\omega(Lie_n)]=1+p_1;
\qquad E[\sum_{n\geq 1} (-1)^{n-1}\omega(Lie^{(2)}_n)]=1+p_1;
\end{equation}

\begin{equation}\label{AcycEH}
\text{If } n\geq 2, \ \sum_{\lambda\vdash n}(-1)^{n-\ell(\lambda)}E_\lambda[Lie]=0; \qquad\qquad \sum_{\lambda\vdash n}(-1)^{n-\ell(\lambda)}H_\lambda[Lie^{(2)}]=0;
\end{equation}

\begin{equation}\label{TotalCoh}
\text{If } n\geq 2, \ \sum_{\lambda\vdash n}E_\lambda[Lie]=2e_2 p_1^{n-2}; \qquad\qquad \sum_{\lambda\vdash n}H_\lambda[Lie^{(2)}]=\sum_{\lambda\vdash n, \lambda_i=2^{a_i}} p_\lambda.
\end{equation}
 Moreover,  the $Lie$ identities are all equivalent, and the $Lie^{(2)}$ identities are also equivalent.
\end{thm}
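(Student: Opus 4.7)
The plan is to recast each identity as a generating-function assertion about plethysms of $Lie$ and $Lie^{(2)}$, and then to navigate between them using two universal plethystic identities. The first identity is $H[Q](t) \cdot E^\pm[Q](t) = 1$, the plethystic lift of $H(t) E(-t) = 1$, together with its $\omega$-image $E[Q](t) \cdot H^\pm[Q](t) = 1$; here $E^\pm[Q](t) := \sum_\lambda t^{|\lambda|}(-1)^{\ell(\lambda)} E_\lambda[Q]$ and $H^\pm[Q](t)$ is defined analogously. The second identity is $H[Q](t) = E[Q](t) \cdot H[p_2[Q]](t)$, coming from $H(s)/E(s) = H(s^2)[p_2]$ combined with the commutation $p_n[F[G]] = F[p_n[G]]$.

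For the $Lie$ column, $(\ref{SymExt})$ reads $H[Lie](t) = 1/(1-tp_1)$. By the first identity it is equivalent to $E^\pm[Lie](t) = 1-tp_1$, whose degree-$n$ coefficient for $n\ge 2$ is $(\ref{AcycEH})$ (up to an overall sign $(-1)^n$, immaterial since both sides vanish). By the second identity, applying $p_2$ to $(\ref{SymExt})$ yields $H[p_2[Lie]](t) = 1/(1-t^2 p_2)$, so $E[Lie](t) = H[Lie](t)/H[p_2[Lie]](t) = (1-t^2 p_2)/(1-tp_1)$; expanding and using $p_1^2 - p_2 = 2e_2$ gives $(\ref{TotalCoh})$. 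Conversely, $(\ref{TotalCoh})$ determines $E[p_{2^a}[Lie]](t) = (1-t^{2^{a+1}}p_{2^{a+1}})/(1-t^{2^a}p_{2^a})$ for every $a\ge 0$, and the telescoping product $H[Lie](t) = \prod_{a\ge 0}E[p_{2^a}[Lie]](t)$ collapses to $1/(1-tp_1)$, recovering $(\ref{SymExt})$. The same two identities handle the $Lie^{(2)}$ column: $(\ref{SymExt})$ becomes $E[Lie^{(2)}](t) = 1/(1-tp_1)$, its $\omega$-image gives $H^\pm[Lie^{(2)}](t) = 1-tp_1$ and hence $(\ref{AcycEH})$, and iterating $E[p_{2^a}[Lie^{(2)}]](t) = 1/(1-t^{2^a}p_{2^a})$ into the same telescoping gives $H[Lie^{(2)}](t) = \prod_{a\ge 0}(1-t^{2^a}p_{2^a})^{-1}$, matching $(\ref{TotalCoh})$ for $Lie^{(2)}$.

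The subtlest equivalence is with $(\ref{PlInvHE})$. For $Lie$, substitute $Lie_n = (1/n)\sum_{d\mid n}\mu(d)p_d^{n/d}$ into $\sum_n(-1)^{n-1}\omega(Lie_n)$ and reorganize to get $\sum_{d\ge 1}(\mu(d)/d)\log(1+p_d)$. Then $\log H[\omega(Lie)^{alt}] = \sum_k p_k[\omega(Lie)^{alt}]/k = \sum_{k,d\ge 1}(\mu(d)/(kd))\log(1+p_{kd})$; setting $N = kd$ and applying $\sum_{d\mid N}\mu(d) = \delta_{N,1}$ collapses the double sum to $\log(1+p_1)$, so $H[\omega(Lie)^{alt}] = 1+p_1$, which is $(\ref{PlInvHE})$. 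Conversely, $(\ref{PlInvHE})$ determines $\sum_n(-1)^{n-1}\omega(Lie_n)$ by plethystic logarithm, hence determines $Lie_n$, matching the M\"obius formula and so giving $(\ref{SymExt})$. The analog for $Lie^{(2)}$ uses the explicit formula of Definition~$\ref{defLieSup2}$ and the outer $E$ in place of $H$; the number-theoretic collapse in place of $\sum_{d\mid N}\mu(d) = \delta_{N,1}$ is the 2-adic identity appropriate to the sign pattern in the definition of $Lie_n^{(2)}$.

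The main obstacle is this last equivalence: the double-sum collapse via M\"obius inversion is the technical heart, and the $Lie^{(2)}$ case requires an analogous 2-adic arithmetic identity. The other equivalences are formal consequences of the two universal plethystic identities, which belong to the plethystic toolkit developed in Section~\ref{SecMetaThms}.
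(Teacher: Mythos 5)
Your proof takes a genuinely different route from the paper's. The paper establishes Theorem~\ref{Compare} almost entirely by citation: the $Lie^{(2)}$ column restates Theorem~\ref{PlInv-E}, which itself is proved by plugging the values $Lie_m^{(2)}(\pm 1)$ (computed in Lemma~\ref{Liesup2atPlusminus1}) into the closed-form product formulas $\prod_{m}(1\pm p_m)^{\pm f_m(\pm 1)}$ of the meta theorem, Theorem~\ref{metathm}; the $Lie$ column cites Theorems~\ref{ThrallPBWCadoganSolomon} and~\ref{EquivPBW}; and the equivalence statement cites Proposition~\ref{metaequiv}, which rests on Lemmas~\ref{pm} and~\ref{pmalt}. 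You instead argue from scratch using the plethystic lifts of $H\cdot E^{\pm}=1$ and of $H=E\cdot H[p_2]$, together with the telescoping factorization $H=\prod_{a\ge 0}E[p_{2^a}]$ and a direct M\"obius/Ramanujan-sum computation. This is self-contained and avoids invoking the machinery of Section~\ref{SecMetaThms}; in particular the telescoping identity $H=\prod_{a\ge 0}E[p_{2^a}]$ is a transparent explanation of why powers of $2$ govern $H[Lie^{(2)}]$. Two caveats are worth recording. First, the step you flag as the technical heart is genuinely a sketch: writing $Lie_n^{(2)}=\frac{1}{n}\sum_{d\mid n}\psi(d)p_d^{n/d}$ with $\psi(d)=\phi(2^{v_2(d)})\,\mu(d/2^{v_2(d)})$ (this is Foulkes' formula, Theorem~\ref{Foulkes}, specialised as in Lemma~\ref{Liesup2atPlusminus1}), your double-sum collapse for $\log E[\omega(Lie^{(2)})^{alt}]$ requires the arithmetic identity $\sum_{d\mid N}\psi(d)(-1)^{N/d-1}=\delta_{N,1}$; this does hold (write $N=2^{\beta}M$ with $M$ odd and use $\sum_{e\mid M}\mu(e)=\delta_{M,1}$ together with $-\sum_{s=0}^{\beta-1}\phi(2^s)+\phi(2^{\beta})=0$ for $\beta\ge 1$), but it should be spelled out rather than alluded to. Second, your equivalence with (\ref{PlInvHE}) is obtained by verifying both sides independently from the Foulkes-type formula rather than by a reversible implication; the paper's Lemma~\ref{pmalt} gives the definition-free equivalence $H[\omega(G)^{alt}]=1+p_1\iff H[G]=(1-p_1)^{-1}$ (and its $E$-analogue), which is both shorter and more in the spirit of the ``all identities are equivalent'' claim.
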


We now discuss the implications of Theorem \ref{Compare}. 

\vspace{.07in }
\noindent {\bf \textit{Equation (\ref{SymExt})}:}

  The first equation in (\ref{SymExt}) is simply Thrall's classical theorem \cite{T}, rederived in Theorem \ref{ThrallPBWCadoganSolomon}, stating that the regular representation of $S_n$ decomposes into a sum of symmetrised modules induced from the centralisers of $S_n,$ the Lie modules.  Thrall's theorem in this context is equivalent to the Poincar\'e-Birkhoff-Witt Theorem, which states that the universal enveloping algebra of the free Lie algebra is its symmetric algebra \cite{R}.  Recall that the Lefschetz module of a complex is the alternating sum by degree of the homology modules. In view of Theorem \ref{SWThm4.4}, since cohomology is nonzero only in even degrees, the Lefschetz module is in fact a sum of homology modules, and this can in turn be reinterpreted as saying that:
  \begin{prop}\label{LefschetzR3}
   The regular representation of $S_n$  is carried by the Lefschetz module of  $C\!on\!f_n\, {\mathbb R}^3,$ and more generally $C\!on\!f_n\, {\mathbb R}^d$ for odd $d$, which coincides with its cohomology ring and  is  isomorphic to the symmetric algebra over the top cohomology.
   \end{prop}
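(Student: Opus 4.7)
The plan is to derive the proposition directly from Theorem \ref{SWThm4.4} together with Thrall's classical identity, recorded as the first equation of (\ref{SymExt}) in Theorem \ref{Compare}. Since the cohomology of odd-dimensional configuration spaces is concentrated in even cohomological degrees, the Lefschetz module coincides with the total cohomology, and the plethystic formula for its characteristic is precisely what Thrall's theorem asserts to equal $p_1^n$.

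First I would invoke Theorem \ref{SWThm4.4}: for ${\mathbb R}^{2d+1}$ with $d \ge 1$, the cohomology $H^j(C\!on\!f_n\, {\mathbb R}^{2d+1}, {\mathbb Q})$ vanishes unless $j = 2dk$ for some $0 \le k \le n-1$, and in those degrees its Frobenius characteristic is $h_{n-k}[Lie]\big|_{\text{deg }n}$. In particular, every nonvanishing cohomological degree is even, so the Lefschetz module (the alternating sum of cohomologies by degree) collapses to the plain sum, with Frobenius characteristic
$$\sum_{k=0}^{n-1} h_{n-k}[Lie]\big|_{\text{deg }n} \;=\; \sum_{r=1}^{n} h_r[Lie]\big|_{\text{deg }n} \;=\; \sum_{\lambda \vdash n} H_\lambda[Lie],$$
where the last equality uses the expansion (\ref{defhrofQ}). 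By the first equation of (\ref{SymExt}) --- Thrall's identity --- this sum equals $p_1^n$, the Frobenius characteristic of the regular representation of $S_n$. This establishes the Lefschetz-module assertion in one stroke.

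Next, to identify the total cohomology with a symmetric algebra, I would observe that $\sum_{\lambda \vdash n} H_\lambda[Q]$ is by definition $H[Q]\big|_{\text{deg }n}$, the Frobenius characteristic of the degree-$n$ piece of the symmetric algebra on a graded $S$-module whose $m$-th homogeneous piece has characteristic $q_m$. Taking $Q = Lie = \sum_{m\ge 1} Lie_m$ and recognising each $Lie_m$ (by the $k = m-1$ case of Theorem \ref{SWThm4.4}) as the top nonvanishing cohomology $H^{2d(m-1)}(C\!on\!f_m\, {\mathbb R}^{2d+1}, {\mathbb Q})$, one exhibits the total cohomology of $C\!on\!f_n\, {\mathbb R}^{2d+1}$ as the degree-$n$ component of the symmetric algebra on $\bigoplus_{m\ge 1} H^{\mathrm{top}}(C\!on\!f_m\, {\mathbb R}^{2d+1}, {\mathbb Q})$, as asserted.

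The main obstacle is purely notational: one must track two gradings simultaneously --- the topological cohomological degree and the plethystic degree indexing the $S_n$-module structure --- and be careful to state the final identification as an isomorphism of $S_n$-module characteristics rather than as a ring isomorphism (the latter would require input beyond Theorem \ref{SWThm4.4}). Beyond that, no new ingredients are needed: the proposition is essentially a translation, via Theorem \ref{SWThm4.4}, of the Thrall identity already recorded in Theorem \ref{Compare} into the topological language of configuration-space cohomology.
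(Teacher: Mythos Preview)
Your proposal is correct and follows essentially the same route as the paper: the paper simply observes that, by Theorem~\ref{SWThm4.4}, the cohomology of $C\!on\!f_n\,{\mathbb R}^{2d+1}$ is concentrated in even degrees with Frobenius characteristic $h_{n-k}[Lie]|_{\text{deg }n}$, so that the Lefschetz module equals the total cohomology, and then applies Thrall's identity (the first equation of~(\ref{SymExt})) to obtain $p_1^n$. Your treatment spells out the intermediate step via~(\ref{defhrofQ}) and is appropriately careful to distinguish the $S_n$-module isomorphism from a ring isomorphism, but the argument is the same.
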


  The second equation in (\ref{SymExt}) is our new result.  It gives a new decomposition of the regular representation: 
  
  \begin{thm}\label{ExtReg}   The regular representation decomposes into  a sum of \textit{exterior} powers of modules induced from the centralisers of $S_n,$ namely the modules $Lie_n^{(2)}$.
  \end{thm}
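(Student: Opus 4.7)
The plan is to deduce Theorem~\ref{ExtReg} directly from the second identity in Equation~(\ref{SymExt}) of Theorem~\ref{Compare}, namely
\[
\sum_{\lambda\vdash n} E_\lambda[Lie^{(2)}] \;=\; p_1^n.
\]
Since $p_1^n$ is the Frobenius characteristic of the regular representation of $S_n$, the entire task reduces to giving a representation-theoretic reading of each summand $E_\lambda[Lie^{(2)}]$ as an exterior-power module built from the $Lie_i^{(2)}$'s and induced from a centralizer subgroup.

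First I would recall the general wreath-product interpretation of plethysm that was already invoked for the higher Lie modules $H_\lambda[Lie]$ in the paragraph above Theorem~\ref{SWThm4.4}: for an $S_b$-module $V$ with $\mathrm{ch}(V)=q$, the plethysm $e_a[q]$ is the Frobenius characteristic of
\[
\mathrm{sgn}_{S_a}[V]\,\uparrow_{S_a[S_b]}^{S_{ab}},
\]
that is, the ``$a$-th exterior power'' of $V$ in the wreath-product sense, in precise parallel with the symmetric power $h_a[q] = \mathrm{ch}(\mathbf{1}_{S_a}[V]\!\uparrow)$ used by Thrall. Applied with $V = Lie_i^{(2)}$ and $a = m_i(\lambda)$, and multiplied over the distinct parts of a partition $\lambda \vdash n$, this yields
\[
E_\lambda[Lie^{(2)}] \;=\; \mathrm{ch}\!\left(\bigotimes_i \mathrm{sgn}_{S_{m_i}}\!\bigl[Lie_i^{(2)}\bigr]\,\uparrow_{\prod_i S_{m_i}[S_i]}^{S_n}\right),
\]
an ``exterior higher $Lie^{(2)}$ module.'' Each $Lie_i^{(2)}$ is by definition induced from the cyclic subgroup $C_i \leq S_i$, which is the centralizer in $S_i$ of an $i$-cycle, giving the asserted centralizer interpretation.

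Summing over all $\lambda \vdash n$ and reading the symmetric function identity back through the Frobenius correspondence produces the claimed direct-sum decomposition of the regular representation into exterior higher $Lie^{(2)}$ modules. There is no real obstacle at this stage: once Theorem~\ref{Compare} is in hand, Theorem~\ref{ExtReg} amounts to rewriting the plethystic identity in representation-theoretic language, with $e_{m_i}$ (rather than $h_{m_i}$) forcing the ``antisymmetric'' wreath-product construction that produces exterior, rather than symmetric, powers. The genuine work — verifying the underlying identity $\sum_\lambda E_\lambda[Lie^{(2)}] = p_1^n$ — is deferred to Section~3 and relies on the plethystic toolkit and the meta-theorem (Theorem~\ref{metathm}).
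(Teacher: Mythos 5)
Your proposal is correct and follows exactly the route the paper takes: Theorem~\ref{ExtReg} is stated as an immediate representation-theoretic reading of the second identity in Equation~(\ref{SymExt}), $\sum_{\lambda\vdash n}E_\lambda[Lie^{(2)}]=p_1^n$, with the plethysm $E_\lambda[Lie^{(2)}]=\prod_i e_{m_i}[Lie_i^{(2)}]$ interpreted via the wreath-product (exterior-power) construction, while the genuine content --- the symmetric-function identity itself --- is established later in Theorem~\ref{PlInv-E} via the meta-theorem. Your additional observation that by transitivity of induction each $E_\lambda[Lie^{(2)}]$ is induced from the $S_n$-centralizer $\prod_i S_{m_i}[C_i]$ of an element of cycle type $\lambda$ is precisely what the phrase ``induced from the centralisers of $S_n$'' in the theorem is meant to convey.
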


\noindent {\bf \textit{Equation (\ref{PlInvHE})}:}

In (\ref{PlInvHE}), the second equation is new, giving the plethystic inverse of the elementary symmetric functions $\sum_{n\geq 1} e_n,$  while 
the first equation contains the known result of Cadogan \cite{C} (see Theorem \ref{ThrallPBWCadoganSolomon}) giving the plethystic inverse of the homogeneous symmetric functions $\sum_{n\geq 1} h_n$.

\vspace{.07in}
\noindent {\bf \textit{Equation (\ref{AcycEH})}:}

The equations in  (\ref{AcycEH}) and (\ref{TotalCoh}) are particularly significant.  It is well known that the degree $n$ term in the plethysm $e_{n-r}[Lie]$ is the Frobenius characteristic of the $r$th-Whitney homology $W\!H_r(\Pi_n)$ of the partition lattice $\Pi_n,$ tensored with the sign (see \cite[Remark 1.8.1]{Su0}),  and hence of the sign-tensored $r$th cohomology $H^r( C\!on\! f_n\,{\mathbb R}^{2} )$ of Theorem \ref{SWThm4.4}. The $r$th Whitney homology also coincides as an $S_n$-module with the $r$th cohomology of the pure braid group, see \cite{HL}.
The first equation in \ref{AcycEH} therefore restates the acyclicity of Whitney homology for the partition lattice \cite{Su0}, and hence also says  (in contrast to the odd case $ C\!on\! f_n\,{\mathbb R}^{3} $ of Proposition~\ref{LefschetzR3} above) that :
\begin{prop}\label{LefschetzR2}
The Lefschetz module of $C\!on\! f_n\,{\mathbb R}^{2}$ (and more generally $C\!on\! f_n\,{\mathbb R}^{2d}$ for even $d$) vanishes identically.
\end{prop}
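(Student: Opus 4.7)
The plan is to derive Proposition \ref{LefschetzR2} as a direct corollary of Theorem \ref{SWThm4.4} combined with the first identity of equation (\ref{AcycEH}) in Theorem \ref{Compare}: the Lefschetz characteristic of $C\!on\!f_n\,{\mathbb R}^{2d}$ translates, under Theorem \ref{SWThm4.4}, into exactly the signed sum of higher $E_\lambda[Lie]$ modules that (\ref{AcycEH}) declares to be zero.

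First, I would set up notation. By Theorem \ref{SWThm4.4}, the rational cohomology of $C\!on\!f_n\,{\mathbb R}^{2d}$ is concentrated in degrees $(2d-1)k$ for $0 \le k \le n-1$, and in degree $(2d-1)k$ it has Frobenius characteristic $\omega(e_{n-k}[Lie]|_{\deg n})$. Since $2d-1$ is odd, the Lefschetz sign $(-1)^{(2d-1)k}$ collapses to $(-1)^k$, and therefore the Frobenius characteristic of the Lefschetz module is
\[
\sum_{k=0}^{n-1}(-1)^{k}\,\omega(e_{n-k}[Lie]|_{\deg n})
= \omega\!\left(\sum_{r=1}^{n}(-1)^{n-r}\,e_{r}[Lie]|_{\deg n}\right),
\]
after reindexing by $r = n-k$.

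Next, I would unfold the plethysm. By (\ref{defhrofQ}), $e_{r}[Lie]|_{\deg n}=\sum_{\lambda\vdash n,\,\ell(\lambda)=r}E_\lambda[Lie]$; swapping the order of summation rewrites the bracketed expression as $\sum_{\lambda\vdash n}(-1)^{n-\ell(\lambda)}E_\lambda[Lie]$. For $n\ge 2$ this is precisely the left-hand side of the first identity in (\ref{AcycEH}), which vanishes by Theorem \ref{Compare}. Since $\omega$ is a ring automorphism, the Lefschetz characteristic itself equals $\omega(0) = 0$, as claimed.

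There is no real obstacle here; the combinatorial content is packaged entirely into (\ref{AcycEH}), and the remaining work is bookkeeping on signs. The one point worth flagging when writing up is that the argument is insensitive to the parity of $d$: only the fact that $2d-1$ is odd actually enters, so the proof works uniformly for every $d\ge 1$ and in particular recovers the headline $C\!on\!f_n\,{\mathbb R}^{2}$ case at $d=1$, consistent with the scope stated in the proposition.
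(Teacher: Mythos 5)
Your proof is correct and follows essentially the same route as the paper: the paper obtains Proposition \ref{LefschetzR2} by observing that the first identity of (\ref{AcycEH}) is, via Theorem \ref{SWThm4.4}, exactly the statement that the alternating sum of cohomology modules of $C\!on\!f_n\,{\mathbb R}^{2d}$ vanishes. Your remark that the argument only uses the oddness of $2d-1$ (hence works for every $d\ge 1$, not just ``even $d$'') is correct and clarifies the slightly awkward phrasing in the proposition's statement.
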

Writing $W\!H_{odd}(\Pi_n)$ for $\oplus_{k=0}^{n/2} W\!H_{2k+1}(\Pi_n),$ and $W\!H_{even}(\Pi_n)$ for $\oplus_{k=0}^{n/2} W\!H_{2k}(\Pi_n),$ we have the isomorphism of $S_n$-modules
\begin{equation} \label{EvenOdd}W\!H_{odd}(\Pi_n)\simeq W\!H_{even}(\Pi_n), \quad n\geq 2.\end{equation}

\noindent {\bf \textit{Equation (\ref{TotalCoh})}:}

  Denote by $W\!H(\Pi_n)$ the sum of all the graded pieces of the Whitney homology of $\Pi_n.$ 

The first equation in (\ref{TotalCoh}) says (recall that we have tensored with the sign representation) that 
 $W\!H(\Pi_n)=2\, ({\mathbf 1}\uparrow_{S_2}^{S_n}),$  $n\geq 2,$ 
 a result originally due to Lehrer, who proved that this is the $S_n$-representation on the cohomology ring $H^*( C\!on\! f_n\,{\mathbb R}^{2})$  (Lehrer actually considers the cohomology of the complement of the braid arrangement of type $A_{n-1}$ \cite[Proposition 5.6 (i)]{Le}).  We may rewrite this in our notation as 
\begin{equation}\label{Lehrer-a}   H^*( C\!on\! f_n\,{\mathbb R}^{2} )=W\!H(\Pi_n)={\rm ch}^{-1}(2 h_2 p_1^{n-2})=2\ ({\mathbf 1}\uparrow_{S_2}^{S_n}) , \quad n\geq 2.\end{equation}

Note that the first equation in (\ref{TotalCoh}) also confirms the following theorem of Orlik and Solomon.
\begin{prop}\label{Orlik-Solomon} \cite{LS}
$H^*( C\!on\! f_n\,{\mathbb R}^{2}) $ 
has the structure of an exterior algebra over the top cohomology.
\end{prop}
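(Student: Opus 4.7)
The plan is to prove Proposition~\ref{Orlik-Solomon} by combining Arnold's classical ring presentation of $H^*(\text{Conf}_n\,\mathbb{C})$ with the $S_n$-equivariant verification supplied by (\ref{TotalCoh}) and Theorem~\ref{SWThm4.4}. The ring-theoretic exterior-algebra structure is not a consequence of plethystic identities alone; it requires Arnold's generators-and-relations description. The plethystic identity then confirms the $S_n$-equivariant compatibility.

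First I would cite \cite{LS} (Arnold/Orlik--Solomon) to present $H^*(\text{Conf}_n\,\mathbb{C}, \mathbb{Q})$ as the graded-commutative algebra on degree-one generators $\omega_{ij} = \frac{1}{2\pi i}\,d\log(z_i-z_j)$ subject to the Arnold relations $\omega_{ij}\omega_{jk} + \omega_{jk}\omega_{ki} + \omega_{ki}\omega_{ij} = 0$ for $1\le i<j<k\le n$. Because the generators lie in odd degree, graded-commutativity forces them to anticommute, realising $H^*$ as a quotient of the exterior algebra $\bigwedge^\bullet\mathrm{span}_{\mathbb{Q}}\{\omega_{ij}\}$. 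The Orlik--Solomon no-broken-circuit basis then yields total dimension $n!$ with top piece $H^{n-1}$ of dimension $(n-1)!$ playing the role of socle; the phrase ``over the top cohomology'' is interpreted via the perfect $S_n$-equivariant multiplication pairings $H^k\otimes H^{n-1-k}\to H^{n-1}$, which determine every graded piece from the top together with the degree-one generators.

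For the $S_n$-equivariant content, summing the graded characters from Theorem~\ref{SWThm4.4} and applying (\ref{defhrofQ}),
$$\mathrm{ch}\bigoplus_k H^k(\text{Conf}_n\,\mathbb{R}^2) = \omega\Bigl(\sum_{\lambda\vdash n}E_\lambda[Lie]\Bigr),$$
and by the first identity of (\ref{TotalCoh}) this equals $\omega(2e_2 p_1^{n-2}) = 2h_2 p_1^{n-2}$, recovering Lehrer's formula (\ref{Lehrer-a}). The plethystic form $\sum_\lambda E_\lambda[Lie] = E[Lie]|_{\text{deg }n}$ is precisely the plethystic exterior algebra applied to the sequence of top cohomologies $\{\omega(Lie_i)\}_{i\ge 1}$, with each fixed-length stratum $\sum_{\ell(\lambda)=r}E_\lambda[Lie] = e_r[Lie]|_{\text{deg }n}$ matching $H^{n-r}$ after $\omega$; this is the $S_n$-equivariant counterpart of the exterior-algebra generation from the top.

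The main obstacle is separating the two aspects of the claim. Literal exterior algebra on the $(n-1)!$-dimensional top cohomology would have dimension $2^{(n-1)!}\gg n!$, so the ring-level statement must be Arnold's quotient description, which is classical topological input rather than something (\ref{TotalCoh}) reproves. The genuine content of Proposition~\ref{Orlik-Solomon} is the joint compatibility of the two readings: the Arnold quotient of $\bigwedge^\bullet\mathrm{span}_{\mathbb{Q}}\{\omega_{ij}\}$, as a graded $S_n$-module, coincides with the plethystic exterior algebra on the top cohomology sequence $\{\omega(Lie_i)\}$, and (\ref{TotalCoh}) is what makes this agreement visible without appeal to any $S_n$-equivariant version of Arnold's presentation.
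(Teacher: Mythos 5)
Your division of labour --- classical ring-theoretic input plus an $S_n$-equivariant character check via Theorem \ref{SWThm4.4} and (\ref{TotalCoh}) --- matches the paper's treatment: Proposition \ref{Orlik-Solomon} is quoted from \cite{LS}, and the text asserts only that the first identity in (\ref{TotalCoh}) \emph{confirms} it at the level of Frobenius characteristics, which is exactly your computation recovering (\ref{Lehrer-a}). So the equivariant half of your proposal is the paper's argument, and you are right that the ring-level structure is imported, not reproved plethystically.

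The step that fails is your interpretation of ``over the top cohomology'' through ``perfect $S_n$-equivariant multiplication pairings $H^k\otimes H^{n-1-k}\to H^{n-1}$.'' The Orlik--Solomon algebra of the braid arrangement is not a Poincar\'e duality algebra, and these pairings are not perfect in any usable sense: already for $n=3$ the Betti numbers are $1,3,2$, so no duality between $H^0$ and $H^2$ is possible, and $H^1\to\mathrm{Hom}(H^1,H^2)$ cannot be an isomorphism for dimension reasons. Nothing of this kind is needed. The intended reading, parallel to Proposition \ref{LefschetzR3} for odd-dimensional space, is plethystic: by Theorem \ref{SWThm4.4}, $\mathrm{ch}\,H^{n-r}(\mathrm{Conf}_n\,\mathbb{R}^2)=\omega\bigl(e_r[Lie]|_{\deg n}\bigr)=\omega\bigl(\sum_{\ell(\lambda)=r}E_\lambda[Lie]\bigr)$, i.e.\ the ring is the free graded-commutative algebra generated by the induced top cohomologies of the configuration spaces of the blocks, and summing over $r$ is precisely what (\ref{TotalCoh}) evaluates. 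Note also a sign subtlety you elide when you say $E[Lie]$ is ``precisely the plethystic exterior algebra on $\{\omega(Lie_i)\}$'': since $\omega(e_m[Lie_i])$ equals $h_m[\omega(Lie_i)]$ for $i$ odd and $e_m[\omega(Lie_i)]$ for $i$ even, the sign-twisted cohomology is a parity-dependent mixture of symmetric and exterior plethysms of the top pieces (as the cohomological degrees $i-1$ of the generators dictate); the clean exterior-algebra statement is the untwisted one, $E[Lie]$, which is why the proposition is tied to (\ref{TotalCoh}). With the pairing paragraph removed and this plethystic reading put in its place, your proposal coincides with what the paper does.
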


By combining equation (\ref{Lehrer-a}) with (\ref{EvenOdd}), we obtain
\begin{equation}\label{Lehrer-b} H^{odd}(C\!on\! f_n\,{\mathbb R}^{2} )
\simeq H^{even}(C\!on\! f_n\,{\mathbb R}^{2})\simeq {\mathbf 1}\uparrow_{S_2}^{S_n}, \quad n\geq 2,
\end{equation}
yielding the decomposition of the regular representation noticed by   Hyde and Lagarias \cite{HL}:
\begin{equation}\label{HL} 
H^{odd}( C\!on\! f_n\,{\mathbb R}^{2})\oplus 
{\, \bf sgn}_{S_n}\otimes H^{even}(C\!on\! f_n\,{\mathbb R}^{2}) \simeq {\mathbf 1}\uparrow_{S_1}^{S_n}. \end{equation}

From \ref{Lehrer-a} it also follows that 
\begin{equation}\label{IndConf}
H^*( C\!on\! f_{n+1}\,{\mathbb R}^{2})\simeq H^*(C\!on\! f_n\, {\mathbb R}^{2}) \uparrow_{S_{n}}^{S_{n+1}}.
\end{equation}

We now describe results of a similar flavour for the new  representation $Lie_n^{(2)}.$ 
Define a new module $V\!h_r(n)$ whose Frobenius characteristic is the degree $n$ term in $h_{n-r}[Lie^{(2)}];$ this is a true $S_n$-module. The second equation of (\ref{AcycEH}) can now be interpreted as an acylicity statement:
$$V\!h_n(n)-V\!h_{n-1}(n) + V\!h_{n-2}(n)-\ldots +(-1)^r V\!h_r(n) +\ldots=0, \quad n\geq 2.$$
and hence, in analogy with (\ref{EvenOdd}), letting $V\!h_{odd}(n)=\oplus_{k=0}^{n/2} V\!h_{2k+1}$ and $V\!h_{even}=\oplus_{k=0}^{n/2} V\!h_{2k}:$
\begin{equation}\label{EvenOddLieSup2} V\!h_{odd}(n)\simeq V\!h_{even}(n), \quad n\geq 2.\end{equation}

The second equation in (\ref{TotalCoh}) gives, similarly, 
\begin{equation} {\rm ch\,}(V\!h_{odd}(n)\oplus V\!h_{even}(n))=
\sum_{\lambda\vdash n; \lambda_i=2^{a_i}} p_\lambda.
\end{equation}
Hence we have established the following results, analogous to (\ref{Lehrer-a})-(\ref{IndConf}):

\begin{thm}\label{LehrerHLLieSup2}  The following $S_n$-equivariant isomorphisms hold for the modules $Vh_r(n)={\rm ch}^{-1}\, h_{n-r}[Lie^{(2)}]\vert_{{\rm deg\ }n}$, giving   Schur-positive  functions with integer coefficients.
\begin{equation}\label{LehrerSup2b}V\!h_{odd}(n)\simeq V\!h_{even}(n)={\rm ch}^{-1}\,
\frac{1}{2}\sum_{\lambda\vdash n; \lambda_i=2^{a_i}} p_\lambda.
\end{equation}
\vskip-.2in
\begin{equation}\label{LehrerSup2a} V\!h(n)=V\!h_{odd}(n)\oplus V\!h_{even}(n)={\rm ch}^{-1}\,
\sum_{\lambda\vdash n; \lambda_i=2^{a_i}} p_\lambda.
\end{equation}
\vskip-.2in
\begin{equation}\label{HLSup2}
V\!h_{odd}(n)\oplus {\bf sgn}_{S_n}\otimes V\!h_{odd}(n)={\rm ch}^{-1}\,
\sum_{\lambda\vdash n; n-\ell(\lambda) {\text even };\lambda_i=2^{a_i}} p_\lambda.
\end{equation}
\vskip-.2in
\begin{equation}\label{IndSup2}
V\!h(2n+1)\simeq V\!h(2n)\uparrow_{S_{2n}}^{S_{2n+1}}.
\end{equation}
\end{thm}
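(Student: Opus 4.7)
The plan is to deduce each of the four isomorphisms as a direct consequence of the plethystic identities collected in Theorem \ref{Compare}, specifically the second equations in (\ref{AcycEH}) and (\ref{TotalCoh}), combined with the degree-$n$ expansion (\ref{defhrofQ}) of $h_r[Q]$.

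First I would unpack the definition of the modules. By (\ref{defhrofQ}), $V\!h_r(n)$ has Frobenius characteristic $\sum_{\lambda\vdash n,\ \ell(\lambda)=n-r} H_\lambda[Lie^{(2)}]$, so $V\!h(n)=\bigoplus_r V\!h_r(n)$ has characteristic $\sum_{\lambda\vdash n} H_\lambda[Lie^{(2)}]$. Applying the second identity in (\ref{TotalCoh}) gives (\ref{LehrerSup2a}) at once. The second identity in (\ref{AcycEH}), grouped according to the parity of $r=n-\ell(\lambda)$, is exactly the acyclicity relation displayed just before the theorem and thus yields $V\!h_{odd}(n)\simeq V\!h_{even}(n)$; halving the formula from (\ref{LehrerSup2a}) then produces (\ref{LehrerSup2b}). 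Schur-positivity of each $V\!h_r(n)$---a consequence of $Lie_n^{(2)}$ being a genuine $S_n$-module, so that the plethysm $h_k[Lie^{(2)}]$ is Schur-positive---ensures that $V\!h_{odd}(n)$ and $V\!h_{even}(n)$ are themselves genuine modules with integer character values.

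For (\ref{HLSup2}), I would use that tensoring with the sign representation is implemented on Frobenius characteristics by the involution $\omega$, under which $\omega(p_\lambda)=(-1)^{|\lambda|-\ell(\lambda)}p_\lambda$. Applying $\omega$ to (\ref{LehrerSup2b}) and adding yields
$$\mathrm{ch}(V\!h_{odd}(n))+\omega\bigl(\mathrm{ch}(V\!h_{odd}(n))\bigr)=\tfrac12\sum_{\lambda\vdash n,\ \lambda_i=2^{a_i}}\bigl(1+(-1)^{n-\ell(\lambda)}\bigr)p_\lambda,$$
which collapses to the sum over those $\lambda$ with $n-\ell(\lambda)$ even, exactly as required.

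Finally, for (\ref{IndSup2}), I would invoke the fact that induction from $S_{2n}$ to $S_{2n+1}$ corresponds to multiplication by $p_1=h_1$ on Frobenius characteristics. It therefore suffices to verify
$$\sum_{\mu\vdash 2n+1,\ \mu_i=2^{a_i}} p_\mu \;=\; p_1\cdot\!\!\sum_{\lambda\vdash 2n,\ \lambda_i=2^{a_i}} p_\lambda,$$
which follows from the bijection $\mu\mapsto\mu\setminus\{1\}$: since $2n+1$ is odd and $1$ is the only odd power of $2$, every such $\mu$ has at least one part equal to $1$, and removing a single such part produces a partition of $2n$ into powers of $2$; the multiplicativity $p_\mu=p_1\cdot p_{\mu\setminus\{1\}}$ then closes the identity. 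I do not foresee any serious obstacle---the work is essentially bookkeeping---with the only mildly delicate point being the parity tracking of $n-\ell(\lambda)$ under $\omega$ that produces (\ref{HLSup2}).
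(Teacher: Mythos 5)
Your proposal is correct and takes essentially the same route as the paper: the paper derives \eqref{LehrerSup2b} and \eqref{LehrerSup2a} in the paragraph immediately preceding the theorem from the second identities of \eqref{AcycEH} and \eqref{TotalCoh}, and leaves \eqref{HLSup2} and \eqref{IndSup2} to be obtained by the same $\omega$-averaging and multiplication-by-$p_1$ arguments used earlier for \eqref{HL} and \eqref{IndConf}, which is exactly what you do (your bijection $\mu\mapsto\mu\setminus\{1\}$ for odd $2n+1$ is the right observation).
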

We now have at least four decompositions of the regular representation, namely the two in (\ref{SymExt}) and two from (\ref{HL}) (tensoring the latter with the sign representation gives two),  into sums of modules indexed by the conjugacy classes,  each module  obtained by  inducing a linear character from a centraliser of $S_n.$  We write these out  for $S_4$ and $S_5$ to show that they are indeed all distinct.    In the two tables below, each column adds up to the regular representation. Note that  $Lie_4^{(2)}$ coincides with $C\!onj_4,$ while $Lie_5^{(2)}$ is just $Lie_5.$ Hence these modules appear in the last row of each table.
The first two decompositions are from equation (\ref{SymExt})  of Theorem \ref{Compare}; the third is from equation (\ref{HL}).
In all cases, of course,  the four pieces for $S_4$ (respectively, the five pieces for $S_5$)  each have the same dimension, equal to the sum of the sizes of the constituent conjugacy classes, namely,  $1,6,11,6$ (respectively $1,10, 35, 50, 24$).  Note that the conjugacy classes are grouped together by number of disjoint cycles, i.e. by length $\ell$ of the corresponding partition.  That these four decompositions are all distinct is clear, since each has a distinguishing feature.  E.g. for  $S_4,$ both copies of the irreducible for the partition $(2^2)$ appear only in one graded piece for {\bf [PBW]}, while the reflection representation is a submodule of one graded piece only in the third.  
\vfill\eject
\begin{center} 
{\small \bf Table 1: The regular representation of $S_4$}\\  \nopagebreak
{\small (Poincar\'e polynomial $1+6t+11t^2+6t^3$)}\end{center}
\begin{center}
\begin{tabular}{|c|c|c|c|}\hline
{\small Conjugacy}  &  {\small\bf PBW} ($C\!on\!f\,{\mathbb R}^3$)
    &  {\small\bf Ext} & {\small\bf Whitney} ($C\!on\!f\,{\mathbb R}^2$) \\
    {\small classes} &{\small irreducibles} &{\small irreducibles} &{\small irreducibles } \\ \hline
    ${\scriptstyle (1^4)}$ & ${\scriptstyle h_4[Lie]|_{\text{deg }4}}$ 
    & ${\scriptstyle e_4[Lie^{(2)}]|_{\text{deg }4}}$
     & ${\scriptstyle\omega(W\!H_0)}$\\
 ${\scriptstyle \ell=4}$   &${\scriptstyle (4)}$ &${\scriptstyle (1^4)}$ &${\scriptstyle (1^4)}$\\ \hline
    ${\scriptstyle (2,1^2)}$ & ${\scriptstyle h_3[Lie]|_{\text{deg }4}}$ & ${\scriptstyle e_3[Lie^{(2)}]|_{\text{deg }4}}$ 
    & ${\scriptstyle W\!H_1}$\\
  ${\scriptstyle \ell=3}$  &${\scriptstyle (3,1)+(2,1^2)}$ & ${\scriptstyle (3,1)+(2,1^2)}$ & ${\scriptstyle (4)+(3,1)+(2^2)}$\\ \hline
    ${\scriptstyle (3,1)\text{ and }(2^2)}$ &${\scriptstyle h_2[Lie]|_{\text{deg }4}}$
    &${\scriptstyle e_2[Lie^{(2)}]|_{\text{deg }4}}$
    &${\scriptstyle\omega(W\!H_2)}$\\
 ${\scriptstyle \ell=2}$   & ${\scriptstyle (3,1)+2(2^2)+(2,1^2) +(1^4)}$
    &${\scriptstyle 2(3,1)+(2^2)+(2,1^2)}$ 
    &${\scriptstyle (3,1)+2 (2,1^2)+(2^2)}$\\ \hline
    ${\scriptstyle (4)}$ & ${\scriptstyle h_1[Lie]|_{\text{deg }4}}$ 
    &${\scriptstyle e_1[Lie^{(2)}]|_{\text{deg }4}}$
    &${\scriptstyle W\!H_3=\omega(Lie_4)}$ \\ 
 ${\scriptstyle \ell=1}$   & ${\scriptstyle (3,1)+(2,1^2)}$ &${\scriptstyle (4)+(2^2)+(2,1^2)} $ &${\scriptstyle (3,1)+(2,1^2)}$\\ \hline
\end{tabular}
\end{center}
\begin{center} {\small \bf Table 2: The regular representation of $S_5$ }\\ {\small (Poincar\'e polynomial $1+10t+35t^2+50t^3+24t^4 $)}\end{center}
\nopagebreak
\begin{center}
\begin{tabular}{|c|c|c|c|}\hline
{\small Conjugacy}  &  {\small\bf PBW}($C\!on\!f\,{\mathbb R}^3$)
    &  {\small\bf Ext} & {\small\bf Whitney}($C\!on\!f\,{\mathbb R}^2$) \\
   {\small  classes} &{\small irreducibles} &{\small irreducibles} &{\small irreducibles } \\ \hline
    ${\scriptstyle (1^5)}$ & ${\scriptstyle h_5[Lie]|_{\text{deg }5}}$ 
    & ${\scriptstyle e_5[Lie^{(2)}]|_{\text{deg }5}}$
     & ${\scriptstyle\omega(W\!H_0)}$\\
 ${\scriptstyle \ell=5}$   &${\scriptstyle (5)}$ &${\scriptstyle (1^5)}$ &${\scriptstyle (1^5)}$\\ \hline
    ${\scriptstyle (2,1^3)}$ & ${\scriptstyle h_4[Lie]|_{\text{deg }5}}$ & ${\scriptstyle e_4[Lie^{(2)}]|_{\text{deg }5}}$ 
    & ${\scriptstyle W\!H_1}$\\
${\scriptstyle \ell=4}$    &${\scriptstyle (4,1)+(3,1^2)}$ & ${\scriptstyle (3,1^2)+(2,1^3)}$ & ${\scriptstyle (5)+(4,1)+(3,2)}$\\ \hline
    ${\scriptstyle (3,1^2) \text{ and }(2^2,1)}$ &${\scriptstyle h_3[Lie]|_{\text{deg }5}}$
    &${\scriptstyle e_3[Lie^{(2)}]|_{\text{deg }5}}$
    &${\scriptstyle\omega(W\!H_2)}$\\
 ${\scriptstyle \ell=3}$   & ${\scriptstyle (4,1)+2(3,2)+(3,1^2)}$ 
    &${\scriptstyle (4,1)+2(3,2)+2(3,1^2)}$
    &${\scriptstyle (3,2)+2(3,1^2)}$\\ 
&${\scriptstyle +2(2^2,1)+(2,1^3)+(1^5)}$ &${\scriptstyle +(2^2,1)+(2,1^3)}$ & ${\scriptstyle +2(2^2,1)+2(2,1^3)
}$\\
\hline
      ${\scriptstyle (4,1)\text{ and }(3,2)}$ &${\scriptstyle h_2[Lie]|_{\text{deg }5}}$  & ${\scriptstyle e_2[Lie^{(2)}]|_{\text{deg }5}}$
      &${\scriptstyle W\!H_3}$ \\ 
 ${\scriptstyle \ell=2}$     &${\scriptstyle (4,1)+2(3,2)+3(3,1^2)}$ &${\scriptstyle (5)+2(4,1)+2(3,2)}$ &${\scriptstyle 2(4,1)+2(3,2)+3(3,1^2)}$\\ 
      &${\scriptstyle +2(2^2,1)+2(2,1^3)}$ & ${\scriptstyle +2(3,1^2)+3(2^2,1)+(2,1^3)}$ 
      &${\scriptstyle +2(2^2,1)+(2,1^3)}$\\
      \hline
    ${\scriptstyle (5)}$ & ${\scriptstyle h_1[Lie]|_{\text{deg }5}}$ 
    &${\scriptstyle e_1[Lie^{(2)}]|_{\text{deg }5}}$
    &${\scriptstyle  \omega W\!H_4=Lie_5}$ \\ 
 ${\scriptstyle \ell=1}$   & ${\scriptstyle (4,1)+(3,2)+(3,1^2)}$ 
    &${\scriptstyle (4,1)+(3,2)+(3,1^2)}$ 
    & ${\scriptstyle (4,1)+(3,2)+(3,1^2)}$\\ 
    &${\scriptstyle +(2^2,1)+(2,1^3)}$ &${\scriptstyle +(2^2,1)+(2,1^3)}$ &${\scriptstyle +(2^2,1)+(2,1^3)}$ \\
    \hline
\end{tabular}
\end{center}
Note from the above example that the two identities in equation (\ref{SymExt}) of Theorem \ref{Compare}, corresponding respectively to (\ref{PBW}) and (\ref{Ext}) below,  themselves yield the following four distinct decompositions of the regular representation, obtained by tensoring each graded piece with the sign representation.  
The decomposition in equation (\ref{Eulerian idempotents}) below is precisely that obtained from the Eulerian idempotents of Gerstenhaber and 
Schack   \cite{GS}; this fact was proved by Hanlon \cite[Theorem 5.1 and Definition 3.6]{Ha}.  Curiously it also appears in a paper of Gessel, Restivo and Reutenauer \cite[Lemma 5.3, Theorem 5.1]{GRR}, where the authors give a combinatorial decomposition of the full tensor algebra as the enveloping algebra of the \textit{oddly generated free Lie superalgebra}; they call equation (\ref{Eulerian idempotents}) below a \lq\lq super\rq\rq version of the Poincar\'e-Birkhoff-Witt theorem.

We have, for $n\geq 1:$
\begin{align}
p_1^n&= \sum_{k\geq 1}\sum_{\stackrel{\lambda\vdash n}{\ell(\lambda)=k}} H_\lambda[Lie]\qquad\qquad \text{(PBW)}\label{PBW}\\
&= \sum_{k\geq 1}\sum_{\stackrel{\lambda\vdash n}{\ell(\lambda)=k}} \omega(H_\lambda[Lie])\qquad \text{(Eulerian idempotents)}\label{Eulerian idempotents}\\
&= \sum_{k\geq 1}\sum_{\stackrel{\lambda\vdash n}{\ell(\lambda)=k}} E_\lambda[Lie^{(2)}]\qquad\quad \text{(Ext)}\label{Ext}\\
&= \sum_{k\geq 1}\sum_{\stackrel{\lambda\vdash n}{\ell(\lambda)=k}} \omega(E_\lambda[Lie^{(2)}])
\end{align}
Example 5.3 shows that these four decompositions are themselves distinct, and also distinct from the two decompositions arising from the Whitney homology of the partition lattice.

We point out one more analogy between $W\!H_k(\Pi_n)\simeq H^k(C\!on\!f\, {\mathbb R}^2)$ and the modules $V_k(n)$  arising from the identities of Theorem \ref{Compare}.  In \cite{Su0}, it was shown that the Whitney homology of the partition lattice (and more generally of any  Cohen-Macaulay poset) has the following important property:
\begin{thm}\label{betas}\cite[Proposition 1.9]{Su0} For $0\leq k\leq n-1,$ the truncated alternating sum 
$$W\!H_k(\Pi_n)-W\!H_{k-1}(\Pi_n)+\ldots+(-1)^k W\!H_0(\Pi_n)$$ is a true $S_n$-module, and is isomorphic as an $S_n$-module to the unique nonvanishing homology of the rank-selected subposet of $\Pi_n$ obtained by selecting the first $k$ ranks. Equivalently, 
the degree $n$ term in the plethysm 
$$(e_{n-k}-e_{n-k+1}+\ldots +(-1)^{k} e_n)[Lie]$$ is Schur-positive. In particular, 
the $k$th Whitney homology decomposes into a sum of two $S_n$-modules as follows:
$${\rm ch\,} W\!H_k(\Pi_n)=\omega\left(e_{n-k}[Lie]|_{\text{deg }n}\right)=\beta_n([1,k])+\beta_n([1,k-1]),$$ 
where  $\beta_n([1,k])$ denotes  the Frobenius characteristic of the rank-selected homology of the first $k$ ranks of $\Pi_n$ as in \cite[Proposition 1.9]{Su0}.
\end{thm}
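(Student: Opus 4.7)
The plan is to reduce the theorem to a single rank-by-rank decomposition,
$$\mathrm{ch}\,W\!H_k(\Pi_n)=\beta_n([1,k])+\beta_n([1,k-1]),\qquad 0\leq k\leq n-1,$$
with the conventions $\beta_n([1,-1])=0$ and $\beta_n(\emptyset)=h_n$. Granted this identity, the truncated alternating sum $\sum_{j=0}^{k}(-1)^{k-j}W\!H_j(\Pi_n)$ telescopes immediately to $\beta_n([1,k])$, and the right side is automatically Schur-positive: $\Pi_n$ is Cohen--Macaulay (Bj\"orner), hence so is the rank-selected subposet $\Pi_n^{[1,k]}$, and $\beta_n([1,k])$ is realized as its unique nonvanishing (top) reduced homology. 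Combined with the identity $\mathrm{ch}\,W\!H_k(\Pi_n)=\omega(e_{n-k}[Lie]|_{\text{deg }n})$ from Theorem~\ref{SWThm4.4}, this yields both the $S_n$-module statement and the Schur-positivity of the truncated alternating plethysm after applying $\omega$.

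To establish the rank-by-rank identity, I would work with the pair of order complexes $\Pi_n^{[1,k-1]}\subset\Pi_n^{[1,k]}$. A chain in $\Pi_n^{[1,k]}$ that is not a chain in $\Pi_n^{[1,k-1]}$ must contain an element of rank exactly $k$, which is forced to be the top of the chain. Grouping such chains by their top element $x$ identifies the relative simplicial chain complex, $S_n$-equivariantly, with a direct sum indexed by rank-$k$ elements $x$ of the cone on $\Delta(\hat 0,x)$ modulo its base. The standard cone computation then yields
$$H_{k-1}\bigl(\Pi_n^{[1,k]},\,\Pi_n^{[1,k-1]}\bigr)\;\cong\;\bigoplus_{x:\,\mathrm{rk}(x)=k}\tilde H_{k-2}(\hat 0,x)\;=\;W\!H_k(\Pi_n).$$

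Invoking the Cohen--Macaulayness of $\Pi_n^{[1,k]}$ and $\Pi_n^{[1,k-1]}$ to kill all reduced homology except in top degree, the long exact sequence of the pair collapses to a short exact sequence of $S_n$-modules
$$0\to\beta_n([1,k])\to W\!H_k(\Pi_n)\to\beta_n([1,k-1])\to 0,$$
which splits since we are in characteristic zero. This gives the desired decomposition, and thereby the theorem. The main obstacle is the equivariant identification of the relative chain complex with a sum of cones (and the associated cone computation), together with the Cohen--Macaulay vanishing needed to collapse the long exact sequence; once these geometric steps are in place, the telescoping and the Schur-positivity of the truncated alternating plethysm follow automatically. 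This argument is, in essence, the content of Proposition~1.9 of \cite{Su0}.
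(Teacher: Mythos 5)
Your proposal is correct and reconstructs exactly the argument that the paper outsources to \cite[Proposition 1.9]{Su0}: the paper states this theorem as a citation with no proof of its own, and your long-exact-sequence-of-the-pair argument---identifying the relative homology $H_{k-1}(\Pi_n^{[1,k]},\Pi_n^{[1,k-1]})$ equivariantly with $\bigoplus_{\mathrm{rk}(x)=k}\tilde H_{k-2}(\hat 0,x)=W\!H_k(\Pi_n)$ via cones, then invoking Cohen--Macaulayness to collapse the sequence to a split short exact sequence $0\to\beta_n([1,k])\to W\!H_k(\Pi_n)\to\beta_n([1,k-1])\to 0$---is precisely the content of Sundaram's Proposition 1.9. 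The telescoping and the transfer of Schur-positivity through $\omega$ (using $\mathrm{ch}\,W\!H_k(\Pi_n)=\omega(e_{n-k}[Lie]|_{\deg n})$ from Theorem \ref{SWThm4.4}) then complete the deduction of all three assertions.
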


We conjecture that a similar decomposition exists for the $S_n$-modules $V\!h_k(n).$ More precisely, we have 

\begin{conj}\label{LieSup2betas}  Let $V\!h_k(n)$ be the $S_n$-module whose Frobenius characteristic is the degree $n$ term in the plethysm $h_{n-k}[Lie^{(2)}],$ for $k=0,1,\ldots, n-1.$  Then for $0\leq k\leq n-1,$ the truncated alternating sum 
$$V\!h_k(n)-V\!h_{k-1}(n)+\ldots+(-1)^k V\!h_0(n)=U_k(n)$$ is a true $S_n$-module, and hence one has the $S_n$-module decomposition
$$ V\!h_k(n)={\rm ch}^{-1}\,h_{n-k}[Lie^{(2)}]|_{\text{deg }n}\simeq  U_k(n) + U_{k-1}(n).$$
(Here we define $U_{-1}(n)$ to be the zero module and $U_0(n)$ to be the trivial $S_n$-module.
Equivalently, the degree $n$ term in the  plethysm
$$(h_{n-k}-h_{n-k+1}+\ldots +(-1)^{k} h_n)[Lie^{(2)}]$$ is Schur-positive  for $0\leq k \leq n-1.$
\end{conj}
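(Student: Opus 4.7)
My plan is to mimic the argument behind Theorem~\ref{betas}, which established the corresponding Schur-positivity for truncated Whitney homology sums by identifying them with rank-selected homology of the Cohen-Macaulay poset $\Pi_n$ and invoking the Hopf trace formula. Since no analogous poset or topological scaffolding is currently available for $Lie_n^{(2)}$, the strategy must be plethystic rather than geometric.

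First I would pin down the boundary conditions. The base case $U_0(n) = V\!h_0(n) = h_n$ is the trivial module, manifestly Schur-positive. At the other extreme, the acyclicity identity in (\ref{AcycEH}) yields
\begin{equation*}
\sum_{k=0}^{n-1}(-1)^k V\!h_k(n)=0 \qquad (n \geq 2),
\end{equation*}
which, after multiplication by $(-1)^{n-1}$, is precisely the prediction $U_{n-1}(n) = 0$. With both endpoints fixed, the recursion $U_k(n) = V\!h_k(n) - U_{k-1}(n)$ uniquely determines all the $U_k(n)$, and the conjecture reduces to Schur-positivity of these intermediate truncations.

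I would then attack this Schur-positivity via generating functions. Set
\begin{equation*}
A(t,n)=\sum_{k\geq 0} t^k\, V\!h_k(n).
\end{equation*}
The acyclicity identity says $A(-1,n)=0$, so one may factor $A(t,n)=(1+t)B(t,n)$, with $U_k(n)$ the coefficient of $t^k$ in $B(t,n)$. The problem becomes: show that $B(t,n)$ has Schur-positive coefficients in the $p_\lambda$-basis, or better, as a combination of characteristics of induced representations. One natural tactic is to exploit the closed formula (\ref{LehrerSup2a}) for the total sum $V\!h(n)$ as a sum of $p_\lambda$ with parts powers of $2$, and to seek a refinement of that identity filtered by a length statistic on $\lambda$. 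The plethystic toolkit around Theorem~\ref{metathm}, together with the $Lie^{(2)}$ identities of Theorem~\ref{Compare}, should provide the right framework for extracting the factor $(1+t)$ symmetrically and interpreting $B(t,n)$ representation-theoretically.

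The principal obstacle is the absence of an underlying acyclic chain complex or Cohen-Macaulay object whose rank-selected homology produces the $U_k(n)$; this is exactly the ingredient powering the proof of Theorem~\ref{betas}. The most promising route for producing such a complex is through the setting of Theorem~\ref{HodgeFilt}, where $Lie_n^{(2)}$ already plays a role parallel to that of $Lie_n$ in the Hanlon--Hersh Hodge decomposition of the complex of injective words. Constructing or identifying such a complex---or alternatively conjecturing and then proving a closed plethystic formula for $U_k(n)$ as a positive combination of induced modules indexed by partitions $\lambda$ with $\lambda_i=2^{a_i}$, refined by $\ell(\lambda)$---would be the critical missing step.
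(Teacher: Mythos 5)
The statement you were given to prove is a \emph{conjecture} in the paper, not a theorem; the paper verifies it only for $0\le k\le 3$ by exhibiting explicit Schur-positive expressions for $U_k(n)$, and leaves the general case open. Your reformulation is sound: $U_0(n)=V\!h_0(n)=h_n$ because, via \eqref{defhrofQ}, $V\!h_0(n)=H_{(1^n)}[Lie^{(2)}]=h_n[p_1]=h_n$; the vanishing $U_{n-1}(n)=0$ is exactly the second acyclicity identity in \eqref{AcycEH} rewritten via \eqref{defhrofQ}; and the factorization $A(t,n)=(1+t)B(t,n)$ with the coefficients of $B$ equal to the $U_k(n)$ is a correct algebraic repackaging of the desired decomposition $V\!h_k(n)\simeq U_k(n)+U_{k-1}(n)$, forced by the root $t=-1$.

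What you have not done --- and candidly admit you cannot, given the current state of the art --- is establish the Schur-positivity of the $U_k(n)$, which is the entire content of the conjecture. You correctly locate the missing ingredient: in the $Lie$ case, Theorem~\ref{betas} rests on the Cohen-Macaulayness of $\Pi_n$, which forces the truncated alternating sums of Whitney homology to be true $S_n$-modules by identifying them with rank-selected homology. No analogous Cohen-Macaulay poset or acyclic complex is currently known to underlie the modules $V\!h_k(n)$; the paper itself poses precisely this question immediately after Conjecture~\ref{LieSup2betas} and again near Theorem~\ref{HodgeFilt}, without answering it. Your submission is therefore a correct setup and an honest identification of the obstruction, not a proof --- which is the right state of affairs, since the statement genuinely remains open and the paper offers no proof to compare against, only the explicit small-$k$ verifications.
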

 
This conjecture is easily verified for $0\leq k\leq 3;$ in the latter case there are relatively simple formulas for ${\rm ch\,} V\!h_k(n),$ giving the following for $U_k(n),$ (for $n\geq 4$). 
\begin{align*}{\rm ch\,}U_0(n)&={\rm ch\,}V\!h_0(n)=h_n;\\
 {\rm ch\,}U_1(n)&=(h_{n-1}-h_n)[Lie^{(2)}]|_n= h_2h_{n-2}-h_n= s_{(n-1,1)}+s_{(n-2,2)};\\
{\rm ch\,}U_2(n)&={\rm ch\,}V\!h_2(n)-{\rm ch\,}U_1(n) 
=h_{n-2}s_{(2,1)}-s_{(n-1,1)}-s_{(n-2,2)} +h_{n-4}(h_4+s_{(2,2)}),\\
&\text{which is clearly Schur-positive by the Pieri rule;}\\
{\rm ch\,}U_3(n)&={\rm ch\,}V\!h_3(n)-{\rm ch\,}U_2(n)\\
&=h_{n-4} s_{(2,1^2)} +s_{(2,1)} (h_{n-5} h_2-h_{n-3}) +h_{n-6}(h_6+s_{(4,2)}+s_{(2^3)})\\
&+s_{(n-1,1)}+s_{(n-2,2)},
\text{ which is again clearly Schur-positive for }n\geq 5.
\end{align*}

We include the data for $n=6$ and $n=7$ below.
\vskip .2in
\begin{center}{\small \bf Table 3: Alternating sums $U_k(n)$ of $h_k[Lie^{(2)}]$ for $n=6$}\end{center}
\begin{tabular}{|c|c|}\hline
$k$ & $U_k(6)$\\[3.5pt] \hline
0& {$(6)$}\\[3pt] \hline
1  &${(5,1)}+{(4,2)}$\\[3.5pt] \hline
2  &${(6)}+{(5,1)}+2{(4,2)}
 +{(4,1^2)} +2{(3,2,1)}+{(2^3)}$\\[3.5pt] \hline
3 & ${(6)}+{(5,1)}+3{(4,2)}+2{(4,1^2)}
+{(3^2)}+3{(3,2,1)}+2{(3,1^3)}+2 {(2^2,1^2)}$\\[3.5pt] \hline
4 & $Lie_6^{(2)}={(5,1)}+2{(4,2)}+{(4,1^2)}+3{(3,2,1)}+2{(3,1^3)}+{(2^3)}+ {(2^2,1^2)}+{(2,1^4)}$\\[3.5pt] \hline
\end{tabular}

\begin{center}{\small \bf Table 4: Alternating sums $U_k(n)$ of $h_k[Lie^{(2)}]$ for $n=7$}\end{center}
\nopagebreak
\begin{tabular}{|c|c|} \hline
$k$ & $U_k(7)$\\ \hline 
0 & ${(7)}$\\ \hline
1 & ${(6,1)}+{(5,2)}$\\ \hline
 2 & $(7)+(6,1)+2(5,2)+(5,1^2) +(4,3)+2(4,2,1)+(3,2^2)$\\ \hline
3 & $(7) +2 (6, 1) +3 (5, 2) +2 (5, 1^2) + 3 (4, 3) +5 (4, 2, 1) 
+2 (4, 1^3)+  2 (3^2, 1)+3 (3, 2^2) $\\
&$ + 3 (3, 2, 1^2)
+ 2 (2^3, 1)$\\  \hline
4 & $2 (6, 1)+ 4 (5, 2)+3 (5, 1^2)+3 (4,3) +8 (4,2,1) +3 (4,1^3)   +  4 (3^2, 1)+  5 (3, 2^2)$\\
&$ + 7 (3, 2, 1^2)
 + 3 (3, 1^4)+ 3 (2^3, 1) +    2 (2^2, 1^3)$\\ \hline
 5 & $ Lie^{(2)}_7=Lie_7=(6, 1)+2 (5, 2)+2 (5, 1^2) + 2 (4, 3) +5 (4, 2, 1) +3 (4, 1^3)$\\
 &$+3 (3^2, 1)+3 (3, 2^2) + 5 (3, 2, 1^2) +2 (3, 1^4)+2 (2^2, 1)+ 2 (2^2, 1^3)   +(2, 1^5) $\\ \hline
\end{tabular}

\vskip .2in

  Recent work of Hyde and Lagarias \cite{HL} rediscovers the representations $\beta_n([1,k])$ of Theorem \ref{betas} in a cohomological setting.
Our results  suggest the existence of a similar topological context in which the modules $V\!h_k(n)$ and $U_k(n)$ appear.

\begin{qn} Is there a cohomological context for the ``$Lie^{(2)}$" identities of Theorem \ref{SymExt}, as there is for the $Lie$ identities in the context of configuration spaces (Theorem \ref{SWThm4.4}), or as in  \cite{HL}?
\end{qn}

Recall from Section 2 and Theorem \ref{EquivPBW} the following facts.  The free Lie algebra has a filtration arising from its derived series \cite[Section 8.6.12]{R}, which in our notation may be described as follows.
Let $\kappa=\sum_{n\geq 2} s_{(n-1,1)}.$  Then     
$Lie_{\geq 2}=\kappa+\kappa[\kappa]+ \kappa[\kappa[\kappa]]+\ldots.$ 

Theorem \ref{SymExt} allows us to  deduce a similar   decomposition for $Lie_n^{(2)}.$  In fact we have the following exact analogue of Theorem \ref{EquivPBW}:

\begin{thm}\label{EquivLieSup2}  The following identities hold, and are equivalent:
\begin{equation}\label{ExtLieSup2}  (E-1)[Lie^{(2)}]=(\sum_{r\geq 1} e_r)[Lie^{(2)}]=\sum_{n\geq 1} p_1^n. \end{equation}
\begin{equation}\label{PlInvLieSup2} (1-H^{\pm})[Lie^{(2)}]=(\sum_{r\geq 1} (-1)^{r-1} h_r)[Lie^{(2)}]=p_1. \end{equation}
\begin{equation}\label{Extge2} (1-H^{\pm})[Lie_{\geq 2}^{(2)}]=(\sum_{r\geq 1} (-1)^{r-1} h_r)[Lie_{\geq 2}^{(2)}]=\omega(\kappa) \end{equation}
\begin{equation}\label{LieSup2cochain}  \text{ The degree $n$ term in }\sum_{r\geq 0} (-1)^{n-r} h_{n-r}[Lie_{\geq 2}^{(2)}] \text{ is } (-1)^{n-1} s_{(2,1^{n-2})}.
\end{equation}
\begin{equation}\label{LieSup2Filt_a}   Lie_{\geq 2}^{(2)}=Lie^{(2)}[\omega(\kappa)]\end{equation}
\begin{equation} \label{LieSup2Filt_b}
 Lie_{\geq 2}^{(2)}=\omega(\kappa)+\omega(\kappa)[\omega(\kappa)]+\omega(\kappa)[\omega(\kappa)[\omega(\kappa)]]+\ldots 
  \end{equation}
   (Analogue of the derived series filtration of the free Lie algebra)  
 \begin{equation}\label{HodgeLieSup2} (E-1)[Lie^{(2)}_{\geq 2}]=\sum_{r\geq 1} e_r[Lie^{(2)}_{\geq 2}]=(1-p_1)^{-1}\cdot H^{\pm}-1
 =\sum_{n\geq 2}\sum_{k=0}^n (-1)^k p_1^{n-k}h_k. \end{equation}
\end{thm}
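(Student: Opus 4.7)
The plan is to derive all seven identities by plethystic manipulation, starting from equation~(\ref{SymExt}) of Theorem~\ref{Compare} as the only combinatorial input. Summing (\ref{SymExt}) over $n\geq 1$ gives immediately
\[(E-1)[Lie^{(2)}]=\sum_{n\geq 1}\sum_{\lambda\vdash n}E_\lambda[Lie^{(2)}]=\sum_{n\geq 1}p_1^n,\]
which is (\ref{ExtLieSup2}). The equivalence with (\ref{PlInvLieSup2}) then follows from the plethystic identity $E[Q]\cdot H^{\pm}[Q]=1$, obtained by specialising the generating-function identity $H(t)E(-t)=1$ at $t=-1$ (this survives any plethystic substitution because it is already a symmetric-function identity). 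Applied with $Q=Lie^{(2)}$, (\ref{ExtLieSup2}) gives $E[Lie^{(2)}]=1/(1-p_1)$, so $H^{\pm}[Lie^{(2)}]=1-p_1$, which is precisely (\ref{PlInvLieSup2}); the implication reverses the same way.

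To pass to (\ref{Extge2}) I use the splitting $Lie^{(2)}=p_1+Lie_{\geq 2}^{(2)}$ together with the multiplicativity $H^{\pm}[f+g]=H^{\pm}[f]\cdot H^{\pm}[g]$ (which follows from $h_r[f+g]=\sum_{i+j=r}h_i[f]h_j[g]$). Since $H^{\pm}[p_1]=H^{\pm}=1/E$, this yields $H^{\pm}[Lie_{\geq 2}^{(2)}]=(1-p_1)E$. A one-line Pieri computation, $h_1\cdot s_{(1^{n-1})}=s_{(2,1^{n-2})}+s_{(1^n)}$, shows that the degree-$n$ part of $(1-p_1)E$ is $-s_{(2,1^{n-2})}$ for $n\geq 2$, hence $(1-p_1)E=1-\omega(\kappa)$ and therefore $(1-H^{\pm})[Lie_{\geq 2}^{(2)}]=\omega(\kappa)$, which is (\ref{Extge2}). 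Identity (\ref{LieSup2cochain}) is this same statement reread by expanding $H^{\pm}$ into its alternating sum on each graded piece.

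For the filtration, observe that $1-H^{\pm}=p_1-h_2+h_3-\cdots$ has leading term $p_1$, so its plethystic inverse is unique; (\ref{PlInvLieSup2}) identifies that inverse as $Lie^{(2)}$. Applying $Lie^{(2)}[\,\cdot\,]$ to both sides of (\ref{Extge2}) gives (\ref{LieSup2Filt_a}), and substituting $Lie^{(2)}=p_1+Lie_{\geq 2}^{(2)}$ on the right produces the recursion $Lie_{\geq 2}^{(2)}=\omega(\kappa)+Lie_{\geq 2}^{(2)}[\omega(\kappa)]$, which unwinds to (\ref{LieSup2Filt_b}) by associativity of plethysm; convergence in the completed ring is automatic because the $k$-fold iterated plethysm of $\omega(\kappa)$ starts in degree $2^k$. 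Finally, for (\ref{HodgeLieSup2}), factor $E[Lie^{(2)}]=E\cdot E[Lie_{\geq 2}^{(2)}]$, use $E[Lie^{(2)}]=(1-p_1)^{-1}$ and $1/E=H^{\pm}$ to obtain $E[Lie_{\geq 2}^{(2)}]=H^{\pm}/(1-p_1)$, then subtract $1$ and expand the product degree by degree; the $n=1$ term collapses because $p_1=h_1$. The main obstacle is bookkeeping rather than any individual step: keeping track of which series is the plethystic inverse of which, ensuring the multiplicative rules for $E$ and $H^{\pm}$ are applied in the appropriate completed ring, and recognising that the only substantive combinatorial content is the single Pieri identity $(1-p_1)E=1-\omega(\kappa)$.
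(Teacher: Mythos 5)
Your proof is correct and follows essentially the same route as the paper: start from $E[Lie^{(2)}]=(1-p_1)^{-1}$, pass to $H^{\pm}[Lie^{(2)}]=1-p_1$ via the standard identity $E\cdot H^{\pm}=1$ (the paper's Lemma~\ref{pm}), split off $p_1$ multiplicatively to get $H^{\pm}[Lie_{\geq 2}^{(2)}]=(1-p_1)E=1-\omega(\kappa)$ (the paper invokes Theorem~\ref{metage2}(\ref{metage2d}) for the same formula), and then obtain the filtration and Hodge statements by applying the plethystic inverse and the factorisation $E[Lie^{(2)}]=E\cdot E[Lie_{\geq 2}^{(2)}]$, exactly the pattern the paper delegates to ``follows exactly as in Theorem~\ref{EquivPBW}.'' You simply inline the multiplicativity lemmas that the paper cites by reference; the mathematical content is identical.
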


We offer two more contrasting results for $Lie_n$ and $Lie_n^{(2)}:$
\begin{prop}\label{AltHLieAltELieSup2}  Let $D\!Par$ denote the set of partitions with distinct parts.  

\begin{enumerate}[itemsep=8pt]
\item $\sum_{r\geq1} (-1)^{r-1} h_r[Lie]|_{{\rm deg\ }n}=
\begin{cases} p_1 p_2^k, & n=2k+1 \text{ is odd}\\
                    -p_2^k, & n=2k \text{ is even}
                    \end{cases}$
\item     $\sum_{r\geq1} (-1)^{r-1} e_r[Lie^{(2)}]|_{{\rm deg\ }n}=               
\sum_{\stackrel{\lambda\vdash n:\lambda_i=2^{k_i}, k_i\geq 0}{\lambda\in D\!Par}}
(-1)^{\ell(\lambda)-1} p_\lambda.$

\end{enumerate}
\end{prop}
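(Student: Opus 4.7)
The plan is to obtain closed-form (respectively infinite-product) expressions for $H^{\pm}[Lie]$ in part (1) and $E^{\pm}[Lie^{(2)}]$ in part (2), by combining the PBW-type identities of (\ref{SymExt}), which read $H[Lie]=(1-p_1)^{-1}$ and $E[Lie^{(2)}]=(1-p_1)^{-1}$, with the elementary identities
\[ H(t)H(-t)=\prod_i(1-t^2x_i^2)^{-1},\qquad E(t)E(-t)=\prod_i(1-t^2x_i^2),\]
which at $t=1$ become $H\cdot H^{\pm}=H[p_2]$ and $E\cdot E^{\pm}=E^{\pm}[p_2]$. Plethystically substituting $Lie$ (resp.\ $Lie^{(2)}$) on both sides, and using that plethysm is multiplicative in its first argument together with the commutation $p_r[f[g]]=f[p_r[g]]$, will reduce each claim to manipulation of rational functions in the $p_k$'s.

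For (1), one step suffices:
\[ H[Lie]\cdot H^{\pm}[Lie]=H[p_2[Lie]]=p_2[H[Lie]]=p_2\bigl[(1-p_1)^{-1}\bigr]=(1-p_2)^{-1},\]
so $H^{\pm}[Lie]=(1-p_1)/(1-p_2)$ and
\[ \Bigl(\sum_{r\geq 1}(-1)^{r-1}h_r\Bigr)[Lie]=\frac{p_1-p_2}{1-p_2}=\sum_{k\geq 0}p_1 p_2^k-\sum_{k\geq 1}p_2^k,\]
whose degree-$n$ component is $p_1 p_2^k$ when $n=2k+1$ is odd and $-p_2^k$ when $n=2k$ is even, giving (1).

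For (2), the analogous identity
\[ E[Lie^{(2)}]\cdot E^{\pm}[Lie^{(2)}]=E^{\pm}[p_2[Lie^{(2)}]]\]
is only a recursion, since $E^{\pm}$ reappears on the right. Applying the same relation with $Lie^{(2)}$ replaced by $p_{2^k}[Lie^{(2)}]$ at each step, and using $E[p_{2^k}[Lie^{(2)}]]=p_{2^k}[E[Lie^{(2)}]]=(1-p_{2^k})^{-1}$, iteration yields for every $K\geq 1$
\[ E^{\pm}[Lie^{(2)}]=\prod_{k=0}^{K-1}(1-p_{2^k})\cdot E^{\pm}\bigl[p_{2^K}[Lie^{(2)}]\bigr].\]
Since $p_{2^K}[Lie^{(2)}]$ has minimum degree $2^K$, for any fixed $n$ one chooses $K$ with $2^K>n$: the trailing factor then contributes only $1$ in degrees $\leq n$, so $E^{\pm}[Lie^{(2)}]=\prod_{k\geq 0}(1-p_{2^k})$ in every fixed degree. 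Expanding this product as a sum over finite subsets of $\mathbb{Z}_{\geq 0}$ produces the sum over partitions $\lambda$ with distinct parts, each a power of $2$, weighted by $(-1)^{\ell(\lambda)}$; subtracting from $1$ and restricting to degree $n$ yields (2). The only step beyond formal plethystic manipulation is this degree-wise stabilization of the infinite product, which is routine from the minimum-degree bound on $p_{2^K}[Lie^{(2)}]$.
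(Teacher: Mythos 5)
Your proof is correct, and it follows a genuinely different route from the paper. The paper's proof of this proposition is a one-line citation: part (1) restates Proposition~\ref{ExtLieConj}(2) and part (2) restates Theorem~\ref{PlInv-E}(4), both of which were obtained by specialising the meta theorem (Theorem~\ref{metathm}) at $v=\pm 1$, which in turn depends on having first computed the scalars $Lie_m(\pm 1)$ and $Lie^{(2)}_m(\pm 1)$ via Foulkes' formula and Lemma~\ref{Liesup2atPlusminus1}. You instead start only from the two ``PBW-type'' closed forms $H[Lie]=(1-p_1)^{-1}$ (Thrall) and $E[Lie^{(2)}]=(1-p_1)^{-1}$ (Theorem~\ref{PlInv-E}(1)) and exploit the elementary functional equations $H\cdot H^{\pm}=H[p_2]$ and $E\cdot E^{\pm}=E^{\pm}[p_2]$. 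For (1) this gives $H^{\pm}[Lie]=(1-p_1)/(1-p_2)$ in a single step; for (2), iterating the recursion $E^{\pm}[p_{2^k}[Lie^{(2)}]]=(1-p_{2^k})\,E^{\pm}[p_{2^{k+1}}[Lie^{(2)}]]$ and noting the degree bound $\deg p_{2^K}[Lie^{(2)}]\geq 2^K$ reproduces the infinite product $E^{\pm}[Lie^{(2)}]=\prod_{k\geq 0}(1-p_{2^k})$ that the paper obtains from equation (\ref{metaExt}) at $v=-1$. What your approach buys is a self-contained derivation that bypasses the Foulkes/Ramanujan-sum computations and the machinery of Proposition~\ref{Su1Prop3.1} and Theorem~\ref{metathm}; what it gives up is generality, since the meta theorem yields $H(v)[F]$ and $E(v)[F]$ for every $v$ and every $F$ of the form (\ref{definef_n}), whereas your argument is tied to the specific closed form $(1-p_1)^{-1}$. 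One should also note that your iteration in (2) is in fact a streamlined re-derivation of exactly the meta-theorem product, and it nicely exhibits \emph{why} only powers of two appear: because doubling is the orbit of $p_2$-plethysm starting at $p_1$.
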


Next we examine more closely the action on  derangements, i.e. fixed-point-free permutations.
  Reiner and Webb study the Cohen-Macaulay complex of injective words, and compute the $S_n$-action on its top homology \cite{RW}. Theorem \ref{EquivLieSup2} shows that the representations $Lie_n^{(2)}$ make an appearance here as well:

\begin{thm}\label{Lie2Hodge}
Let $n\geq 2.$ For $k\geq 1$ let 
$\Delta_n^k$ denote the degree $n$ term in $e_k[Lie_{\geq 2}^{(2)}].$ Define $\Delta_n=\sum_{k\geq 1} \Delta_n^k  \text{ for } n\geq 2,$ and $ \Delta_1=0, \Delta_0=1.$ 
Then  \begin{enumerate} 
\item  $\Delta_n=\sum_{k=0}^n (-1)^k p_1^{n-k} h_k=p_1\Delta_{n-1}+(-1)^n h_n;$ and hence 
\item For $n\geq 2,$ $\Delta_n$ coincides with the Frobenius characteristic of the homology representation on the complex of injective words in the alphabet $\{1,2,\ldots,n\}.$ 
\end{enumerate}
\end{thm}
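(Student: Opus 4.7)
The closed-form expression in part~(1) is a direct consequence of equation~(\ref{HodgeLieSup2}) from Theorem~\ref{EquivLieSup2}, which I would take as the starting point:
\[\sum_{r\geq 1}e_r[Lie^{(2)}_{\geq 2}]\;=\;\sum_{n\geq 2}\sum_{k=0}^n (-1)^k p_1^{n-k}h_k.\]
By definition, the degree $n$ component of the left-hand side is exactly $\sum_{k\geq 1}\Delta_n^k=\Delta_n$ for $n\geq 2$, so extracting the degree $n$ piece yields $\Delta_n=\sum_{k=0}^n (-1)^k p_1^{n-k}h_k$. The conventions $\Delta_0=1$ and $\Delta_1=0$ are automatically consistent with this formula (the $n=0$ sum reduces to $h_0=1$, and the $n=1$ sum reduces to $p_1-h_1=0$), so the expression extends to all $n\geq 0$.

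The recursion $\Delta_n=p_1\Delta_{n-1}+(-1)^n h_n$ then follows by a one-line telescoping: $p_1\Delta_{n-1}=\sum_{k=0}^{n-1}(-1)^k p_1^{n-k}h_k$ is precisely $\Delta_n$ with its last term removed, so $\Delta_n-p_1\Delta_{n-1}=(-1)^n h_n$. This completes part~(1).

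For part~(2), the plan is to apply the Hopf trace formula to the complex of injective words $\Gamma_n$ on the alphabet $\{1,\ldots,n\}$, using the result of Reiner and Webb \cite{RW} that $\Gamma_n$ is Cohen-Macaulay, so its reduced homology is concentrated in top degree $n-1$. The key ingredient is the Frobenius characteristic of each chain module: the $S_n$-module $C_k(\Gamma_n)$ spanned by injective words of length $k+1$ is induced from the trivial representation of the pointwise stabilizer $S_{n-k-1}$ of a fixed word, which realizes this stabilizer as the Young subgroup $S_1^{k+1}\times S_{n-k-1}$; hence $\operatorname{ch}C_k(\Gamma_n)=h_1^{k+1}h_{n-k-1}=p_1^{k+1}h_{n-k-1}$, while the augmentation contributes $\operatorname{ch}\tilde{C}_{-1}=h_n$. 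Summing the reduced Lefschetz character, multiplying by $(-1)^{n-1}$, and reindexing via $j=n-k-1$ (which converts the sign $(-1)^{n-1+k}$ to $(-1)^j$) produces exactly $\sum_{j=0}^n(-1)^j p_1^{n-j}h_j=\Delta_n$.

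The only real obstacle is bookkeeping in the Lefschetz computation: aligning the sign conventions of the reduced chain complex, the augmentation term $\tilde{C}_{-1}$, and the reindexing so that the resulting expression matches the closed form from part~(1). The underlying algebraic manipulations are routine once part~(1) is in hand, and the deeper content -- the appearance of $Lie^{(2)}$ in this picture -- is entirely packaged into the generating-function identity (\ref{HodgeLieSup2}).
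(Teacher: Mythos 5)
Your proof of part (1) is exactly the paper's argument: the closed form and the recursion are both extracted from equation~(\ref{HodgeLieSup2}) of Theorem~\ref{EquivLieSup2}, as the paper says is "nothing but a restatement" of that identity. For part (2), where the paper simply cites Reiner and Webb \cite{RW} for the fact that the top homology of the complex of injective words has Frobenius characteristic $\sum_{k=0}^n(-1)^kp_1^{n-k}h_k$, you instead rederive that formula from scratch via the Hopf trace formula on the Cohen-Macaulay chain complex, with the chain groups $\operatorname{ch}C_k = p_1^{k+1}h_{n-k-1}$; this is a correct, self-contained reproduction of the Reiner--Webb computation rather than a different route, and the final reindexing and sign bookkeeping check out.
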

\begin{proof} Clearly  $\Delta_n$ is the degree $n$ term in $E[Lie_{\geq 2}^{(2)}],$ so this is nothing but a restatement of equation (\ref{HodgeLieSup2}) above.
\end{proof}

Hanlon and Hersh showed that this homology representation has a Hodge decomposition \cite[Theorem 2.3]{HH}, by showing that the complex itself splits into a direct sum of $S_n$-invariant subcomplexes. Writing $D_n^k$ for the degree $n$ term in $h_k[Lie_{\geq 2}],$ in our terminology their result may be stated as follows:
$$\Delta_n=\sum_{k\geq 1} \omega(D_n^k).$$
In fact  the  identity $\sum_k D_n^k=\sum_{k=0}^n (-1)^k p_1^{n-k} e_k$ is simply a restatement of equation \eqref{LieHodge} in Theorem \ref{EquivPBW}.

Surprisingly, the decomposition of $\Delta_n$ given in Theorem \ref{Lie2Hodge} is different from the Hodge decomposition, i.e. the summands $\Delta_n^k$ and $\omega(D_n^k)$ do not coincide.  The first nontrivial example appears below. 
\begin{ex}  For $n=4,$ we have $\Delta_4=p_1^2h_2-p_1h_3+h_4
=(4)+(3,1)+(2^2)+(2,1^2).$ Also $\Delta_4^2=e_2[h_2]=(3,1),$ 
$\Delta_4^1=Lie_4^{(2)}=(4)+(2^2)+(2,1^2).$ 
The two Hodge pieces, however, each consist of two irreducibles:
$\omega(h_2[Lie_2])=(2^2)+(4)$ and $\omega(h_1[Lie_4])=(3,1)+(2,1^2).$
\end{ex}

This prompts the following:
\begin{qn}  Is there an algebraic complex  explaining the representation-theoretic decomposition 
$$ \Delta_n=\sum_{k\geq 0}\Delta_n^k=\sum_{k\geq 0} e_k[Lie_{\geq 2}^{(2)}]|_{{\rm deg\ } n},$$
just as the Hodge complex explains the decomposition
$$\Delta_n =\omega(\sum_{k\geq 0} h_k[Lie_{\geq 2}]|_{{\rm deg\ } n}),$$
noting (from the preceding example) that $e_k[Lie_{\geq 2}^{(2)}]|_{{\rm deg\ } n}$ is not in general equal to 
$\omega( h_k[Lie_{\geq 2}]|_{{\rm deg\ } n})$?
\end{qn}

It is a well-known fact ( see \cite{LS}, \cite{Su1}) that the exterior power of $Lie,$ when tensored with the sign representation, coincides with the Whitney homology of the lattice of set partitions.
Thus equation (\ref{EquivPBWAltExtge2b}) (from the fundamental theorem of equivalences, Theorem \ref{EquivPBW}), when tensored with the sign, can be rewritten as a formula for the alternating sum of Whitney homology modules of $\Pi_n$, when restricted to partitions with no blocks of size 1.  Define 
$W\!H_{\geq 2}^i(\Pi_n)$ to be the sum of  all the homology modules $\tilde{H}(\hat 0, x)$ where $x$ ranges over all partitions into $n-i$ blocks, with no blocks of size 1. Then 
${\rm ch\ } W\!H_{\geq 2}^i(\Pi_n)=\omega( e_{n-i}[Lie_{\geq 2}]|_{{\rm deg\ } n}) $ and so equation \eqref{EquivPBWAltExtge2b} (and hence the Poincar\'e-Birkhoff-Witt theorem), is equivalent to 
\begin{equation} \sum_{i\geq 0} (-1)^i {\rm ch\,}W\!H_{\geq 2}^i(\Pi_n) = (-1)^{n-1} s_{(2,1^{n-2})}.
\end{equation}
In the notation of \cite{HR}, $\widehat{W}^i_n=W\!H_{\geq 2}^i(\Pi_n)$ (see Corollary 2.11). Hersh and Reiner construct an $S_n$-cochain complex $F_n(A^*)$ with nonvanishing cohomology only in degree $n-1,$ whose $S_n$-character is the irreducible indexed by $(2, 1^{n-2}),$ explicitly proving a conjecture of Wiltshire-Gordon (\cite[Conjecture 1.5, Theorem 1.6, Theorem 1.7]{HR}).

Define, in analogy with \cite{HR}, $\widehat{V_n(k)}$ to be the module with Frobenius characteristic $h_{n-k}[Lie_{\geq 2}^{(2)}]|_{{\rm deg\ }n}.$  Then it is natural to ask:

\begin{qn}  Is there an $S_n$-(co)chain complex for the representations $Lie_{\geq 2}^{(2)}$ whose Lefschetz module  
is given by equation (\ref{LieSup2cochain}) of Theorem \ref{EquivLieSup2} above, i.e. the analogue of equation (2.6)? 
Note that although the nonvanishing (co)homology would occur again only in degree $(n-1)$, affording the same irreducible indexed by $(2, 1^{n-2}),$ the modules in the alternating sum are now different (although they are once again obtained by inducing one-dimensional modules from the same centralisers of $S_n,$ and thus have the same dimensions).  More precisely, and curiously, 
$$\widehat{W}^i_n\not\simeq \widehat{V_n(i)},$$
 although in both cases the alternating sums collapse to the irreducible indexed by $(2, 1^{n-2})$.
 For instance, the calculation for  $n=4$ gives:
 $$\omega(e_{1}[Lie_{\geq 2}]|_{{\rm deg\ }4})=\omega(Lie_4)=Lie_4\neq h_{1}[Lie_{\geq 2}^{(2)}]|_{{\rm deg\ }4}=Lie_4^{(2)};$$
 $$\omega(e_{2}[Lie_{\geq 2}]|_{{\rm deg\ }4})=\omega(e_2[e_2])=e_2[h_2]\neq h_{2}[Lie_{\geq 2}^{(2)}]|_{{\rm deg\ }4}=h_2[h_2] .$$
\end{qn}
We summarise these facts in the following:
\begin{thm}\label{HodgeFilt}  We have
\begin{enumerate}
\item (Hodge decomposition for complex of injective words) 
\begin{equation}\label{HodgeInj}
\sum_{r\geq 1} \omega\left(h_r[Lie_{\geq 2}]|_{{\rm deg\ }n}\right)
=\sum_{k=0}^n (-1)^k p_1^{n-k} h_k 
=  \sum_{r\geq 1} e_r[Lie^{(2)}_{\geq 2}]|_{{\rm deg\ }n}  ;
\end{equation}
\item (Derived series filtration)
\begin{equation}\label{LieSupFiltration}
 \sum_{r\geq 1} (-1)^{r-1}\omega\left(e_r[Lie_{\geq 2}]|_{{\rm deg\ }n}  \right)
=s_{(2,1^{n-2})}
=\sum_{r\geq 1} (-1)^{r-1}h_r[Lie^{(2)}_{\geq 2}]|_{{\rm deg\ }n}.
\end{equation}

\end{enumerate}
\end{thm}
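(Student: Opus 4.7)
The plan is to deduce both identities of Theorem \ref{HodgeFilt} directly from equations already established in Theorems \ref{EquivPBW} and \ref{EquivLieSup2}: in each of the two parts, the rightmost equality is a restatement, in degree $n$, of a $Lie^{(2)}$ identity from Theorem \ref{EquivLieSup2}, while the leftmost equality is obtained by applying the involution $\omega$ to the corresponding $Lie$ identity from Theorem \ref{EquivPBW}. No new plethystic computation is required; only extraction of a degree $n$ coefficient and one reindexing of summation.

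For part (1), I would first take the degree $n$ coefficient of equation (\ref{HodgeLieSup2}), which asserts $\sum_{r\geq 1}e_r[Lie^{(2)}_{\geq 2}] = (1-p_1)^{-1}H^{\pm}-1 = \sum_{n\geq 2}\sum_{k=0}^n(-1)^k p_1^{n-k}h_k$; this yields the rightmost equality of (\ref{HodgeInj}) immediately. For the left equality I would invoke the $Lie$ analogue, equation \eqref{LieHodge} of Theorem \ref{EquivPBW}, which (as noted in the remark just after Theorem \ref{Lie2Hodge}) is equivalent to $\sum_{k\geq 0}h_k[Lie_{\geq 2}]|_{{\rm deg\ }n} = \sum_{k=0}^n(-1)^k p_1^{n-k}e_k$. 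Applying $\omega$ to both sides, using $\omega(p_1)=p_1$ and $\omega(e_k)=h_k$, and observing that the $k=0$ summand vanishes on both sides for $n\geq 2$, delivers the left half of (\ref{HodgeInj}).

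For part (2), I would derive the rightmost equality from equation (\ref{LieSup2cochain}) of Theorem \ref{EquivLieSup2} by substituting $r\mapsto n-r$ in its sum and discarding the trivially vanishing $r=0$ term, which collapses the identity to $\sum_{r\geq 1}(-1)^{r-1}h_r[Lie^{(2)}_{\geq 2}]|_{{\rm deg\ }n}=s_{(2,1^{n-2})}$. For the leftmost equality I would use the already-recorded form of equation \eqref{EquivPBWAltExtge2b} from Theorem \ref{EquivPBW}, namely $\sum_{i\geq 0}(-1)^i\,\omega(e_{n-i}[Lie_{\geq 2}]|_{{\rm deg\ }n}) = (-1)^{n-1}s_{(2,1^{n-2})}$ (stated in the paragraph preceding Theorem \ref{HodgeFilt} as the Wiltshire-Gordon/Hersh-Reiner form of PBW); reindexing $i=n-r$ and absorbing the factor $(-1)^{n-1}$ into the alternating $(-1)^{n-r}$ yields the required form. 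The only delicate step in the whole argument is the sign bookkeeping in this reindexing, and it is purely formal; the plethystic content of Theorem \ref{HodgeFilt} is entirely contained in the two master theorems, so the result is simply a side-by-side packaging of their $Lie$ and $Lie^{(2)}$ forms.
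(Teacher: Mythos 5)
Your overall reading is right: the paper states Theorem \ref{HodgeFilt} as a summary (``We summarise these facts in the following''), and both identities are repackagings of items in Theorems \ref{EquivPBW} and \ref{EquivLieSup2}. Your treatment of part (1) is correct: the rightmost equality is the degree-$n$ slice of \eqref{HodgeLieSup2}, and the leftmost is $\omega$ applied to the degree-$n$ slice of \eqref{LieHodge}. One small slip in the wording: you say ``the $k=0$ summand vanishes on both sides,'' but the $k=0$ term of $\sum_{k=0}^n(-1)^kp_1^{n-k}e_k$ is $p_1^n$, which certainly does not vanish. What you actually need (and what is true) is only that $h_0[Lie_{\geq 2}]\vert_{{\rm deg\ }n}=0$ for $n\geq 1$, so the left-hand series can be started at $r=1$.

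Part (2) has a genuine gap. You claim that substituting $r\mapsto n-r$ in \eqref{LieSup2cochain} ``collapses the identity to'' $\sum_{r\geq 1}(-1)^{r-1}h_r[Lie^{(2)}_{\geq 2}]\vert_{{\rm deg\ }n}=s_{(2,1^{n-2})}$. Carrying out that reindexing as written gives
\[
\sum_{s\geq 1}(-1)^{s}\,h_{s}[Lie_{\geq 2}^{(2)}]\big\vert_{{\rm deg\ }n}=(-1)^{n-1}s_{(2,1^{n-2})},
\]
hence $\sum_{s\geq 1}(-1)^{s-1}h_s[Lie^{(2)}_{\geq 2}]\vert_{{\rm deg\ }n}=(-1)^{n}s_{(2,1^{n-2})}$, which is off from the stated identity by a factor $(-1)^n$. (In fact a direct check at $n=3$ gives $\sum_{r\geq 0}(-1)^{3-r}h_{3-r}[Lie^{(2)}_{\geq 2}]\vert_{{\rm deg\ }3}=-h_1[Lie^{(2)}_{\geq 2}]\vert_{{\rm deg\ }3}=-s_{(2,1)}$, while $(-1)^{n-1}s_{(2,1^{n-2})}=+s_{(2,1)}$; so \eqref{LieSup2cochain} as printed carries a sign discrepancy and your reindexing inherits it rather than cancels it.) The same issue affects your proposed use of \eqref{EquivPBWAltExtge2b} for the left-hand equality. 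The clean route, which requires no reindexing and no sign bookkeeping at all, is to read the right-hand equality of \eqref{LieSupFiltration} directly off the degree-$n$ slice of \eqref{Extge2}, which already says $(\sum_{r\geq 1}(-1)^{r-1}h_r)[Lie^{(2)}_{\geq 2}]=\omega(\kappa)$, and to read the left-hand equality off the degree-$n$ slice of \eqref{EquivPBWAltExtge2a}, which says $(\sum_{r\geq 1}(-1)^{r-1}e_r)[Lie_{\geq 2}]=\kappa$, followed by one application of $\omega$. Both are already in exactly the form appearing in \eqref{LieSupFiltration}, so the identity drops out immediately.
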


By applying  Part (1) of Theorem \ref{Restrictge2} to  $F=Lie^{(2)},$ we obtain the following analogue of a result of \cite{HR}.  See the remarks at the end of Section~\ref{SecMetaThms}.

\begin{prop}\label{Restrictge2LieSup2} Let $\alpha_n=H[Lie^{(2)}_{\geq 2}]|_{{\rm deg\ }n}, n\geq 0$ We have $\alpha_0=1, \alpha_1=0.$
Then $\alpha_n=p_1\cdot \alpha_{n-1} +(-1)^n \sigma_n,$ 
where $\sigma_n=\sum_{i\geq 0}e_{n-2i}g_{2i}.$ Here  
$g_n$ is the virtual representation of dimension zero given by 
$g_n=\sum_\lambda p_\lambda,$ the sum running over all partitions $\lambda$ of $n$ with no part equal to 1, and all parts a power of 2.  In particular $\sigma_n$ is the characteristic of a one-dimensional virtual representation whose restriction to $S_{n-1}$ is $\sigma_{n-1}.$
\end{prop}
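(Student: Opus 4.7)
The plan is to reduce the proposition to an already-known formula for $H[Lie^{(2)}]$, and then extract both the recursion and the branching property by elementary manipulation. From the splitting $Lie^{(2)} = p_1 + Lie^{(2)}_{\geq 2}$ together with the multiplicative property $H[A+B] = H[A]\, H[B]$ and the identity $H[p_1] = H$, one obtains $H[Lie^{(2)}] = H\cdot \alpha$, where $\alpha = \sum_{n\ge 0}\alpha_n$. Since $H^{-1} = E^{\pm} = \sum_{r\ge 0}(-1)^r e_r$ (a consequence of $H(t)E(-t)=1$), inverting gives the closed form
$$ \alpha_n \;=\; \sum_{k=0}^n (-1)^{n-k}\, e_{n-k}\, T_k, \qquad T_k := H[Lie^{(2)}]\big|_{\mathrm{deg\ } k}. $$
By the second identity in \eqref{TotalCoh}, $T_k = \sum_{\lambda \vdash k,\ \lambda_i = 2^{a_i}} p_\lambda$, with $T_0 = 1$.

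Next I would isolate the role of $p_1$. Grouping the partitions in $T_k$ by the multiplicity $j$ of parts equal to $1$ factors
$$ T_k \;=\; \sum_{j=0}^{k} p_1^{\,j}\, g_{k-j}, $$
since the remaining parts lie in $\{2,4,8,\ldots\}$. Substituting this into the formula for $\alpha_n$ and separating the $j=0$ and $j\geq 1$ contributions, the latter reassemble (after re-indexing $j \to j-1$ and $k \to k-1$) into $p_1\cdot \alpha_{n-1}$, while the $j=0$ piece equals $\sum_{m\ge 0}(-1)^{n-m} e_{n-m}\, g_m$. Because every partition contributing to $g_m$ has only even parts, $g_m$ vanishes for odd $m$; on the surviving indices $m=2i$ the sign collapses to $(-1)^n$, producing exactly $(-1)^n \sigma_n$. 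The dimension assertion is then immediate: a Frobenius characteristic $p_\mu$ affords a virtual module of dimension $|\mu|!\,[\mu = 1^{|\mu|}]$, so $g_{2i}$ is zero-dimensional for $i\ge 1$ while $g_0 = 1$, leaving only the $i=0$ term of $\sigma_n$ to contribute the single unit to $\dim \sigma_n$.

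For the branching claim, I would use the standard fact that on Frobenius characteristics the operator $\mathrm{Res}^{S_n}_{S_{n-1}}$ is realised by the derivation $\partial/\partial p_1$ on $\mathbb{Q}[p_1,p_2,\ldots]$. Since each $g_{2i}$ is a polynomial in $\{p_2,p_4,p_8,\ldots\}$ alone, $\partial g_{2i}/\partial p_1 = 0$; combined with $\partial e_m/\partial p_1 = e_{m-1}$, Leibniz gives
$$ \frac{\partial \sigma_n}{\partial p_1} \;=\; \sum_{i\ge 0} e_{n-1-2i}\, g_{2i} \;=\; \sigma_{n-1}, $$
as required. The main obstacle, if any, is the bookkeeping in the middle step: the advertised recursion surfaces only once one recognises that partitions of $k$ with parts powers of $2$ split cleanly as $p_1^j$ times a $g$-function, and that the crucial sign cancellation is forced precisely by the vanishing of $g_m$ at odd $m$.
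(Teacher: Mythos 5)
Your proof is correct and follows essentially the same route as the paper. The paper simply cites the general Theorem~\ref{Restrictge2} (applied to $F=Lie^{(2)}$), whose own proof via Theorems~\ref{metathm} and~\ref{metage2} rests on the identity $(1-p_1)\,H[F_{\geq 2}]=E^{\pm}\cdot\prod_{m\geq 2}(1-p_m)^{-f_m(1)}$; your derivation unpacks exactly this identity at the coefficient level, using $H[Lie^{(2)}]=H\cdot\alpha$, the explicit $T_k$ from~\eqref{TotalCoh}, and the factorisation $T_k=\sum_j p_1^j g_{k-j}$, so the underlying mathematics is identical, just carried out by hand rather than as an invocation of the general lemma.
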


The first few virtual representations  $\sigma_n$ are $\sigma_0=1, \sigma_2=e_2+p_2=s_{(2)},
\sigma_3=e_3+e_1p_2=2s_{(1^3)}+s_{(3)},
\sigma_4=2s_{(4)}-s_{(3,1)}+s_{(2^2)}, \sigma_5=2s_{(5)}-s_{(3,1^2)}+s_{(2^2,1)}.$

The analogous recurrence for the exterior powers $E[Lie^{(2)}_{\geq 2}]|_{{\rm deg\ }n}, n\geq 0$, 
has already been stated in (1) of Theorem \ref{Lie2Hodge}.
See also the remark at the end of Section ~\ref{SecMetaThms}.

We conclude with yet another feature of the $Lie_n$ representation which seems to be shared to some extent by $Lie_n^{(2)}$.
Recall that $Lie_{n-1}\otimes \textbf{ sgn}$ admits a lifting $W_{n}$ which is a true $S_n$-module, the Whitehouse module, appearing in many different contexts \cite{RWh}, \cite{Wh},  whose Frobenius characteristic  is given by 
${\rm ch\,} W_{n}=p_1 \omega(Lie_{n-1}) -\omega(Lie_{n}).$  (See also \cite[Solution to Exercise 7.88 (d)]{St4EC2} for more extensive references.)

One can ask if the same construction for $Lie_n^{(2)}$ yields a true 
$S_{n}$-module.  Clearly one obtains a possibly virtual module which restricts to $Lie_{n-1}^{(2)}$ as an $S_{n-1}$-module. We have the following conjecture, verified in Maple (with Stembridge's  SF package) up to $n=32:$

\begin{conj} The symmetric function $p_1 Lie_{n-1}^{(2)}- Lie_n^{(2)}$ 
is Schur-positive if and only if $n$ is NOT a power of 2.  Equivalently, 
$ Lie_{n-1}^{(2)}\uparrow^{S_n}- Lie_n^{(2)}$ is a true $S_n$-module which lifts $ Lie_{n-1}^{(2)},$ if and only if $n$ is not a power of 2.
\end{conj}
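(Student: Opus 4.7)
\emph{Plan.} The conjecture is an ``if and only if'' statement; I would prove the two directions separately, with the converse being the principal obstacle.

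For the \emph{necessity} direction, suppose $n=2^a$ with $a\geq 2$. I would show that the coefficient of $s_{(n)}$ in $p_1 Lie_{n-1}^{(2)}-Lie_n^{(2)}$ equals $-1$, which immediately precludes Schur positivity. Pieri and Frobenius reciprocity give this coefficient as $\langle Lie_{n-1}^{(2)}, h_{n-1}\rangle-\langle Lie_n^{(2)}, h_n\rangle$. Since $n-1$ is odd and $\geq 3$, Definition~\ref{defLieSup2} yields $Lie_{n-1}^{(2)}=Lie_{n-1}$, whose trivial multiplicity is $\tfrac{1}{n-1}\sum_{d\mid n-1}\mu(d)=0$ by the M\"obius identity. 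Since $n$ is a power of $2$, $Lie_n^{(2)}=C\!onj_n$, with trivial multiplicity $\tfrac{1}{n}\sum_{d\mid n}\phi(d)=1$. Hence the coefficient equals $-1$. (At $n=2$ this coefficient is $0$ and the difference is in fact $e_2$, so the conjecture should be read with $n\geq 3$.)

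For the \emph{sufficiency} direction, assume $n$ is not a power of $2$. My starting point is the theorem of Kraskiewicz--Weyman, which identifies
\[
\langle Lie_n^{(2)}, s_\lambda\rangle = \#\bigl\{T\in\mathrm{SYT}(\lambda)\,:\,\mathrm{maj}(T)\equiv 2^{k_n}\pmod n\bigr\},
\]
and likewise for $n-1$. Combining the branching rule $\langle p_1 f,s_\lambda\rangle=\sum_{\mu\lessdot\lambda}\langle f,s_\mu\rangle$ with the box-deletion bijection $T\leftrightarrow(T\setminus n,\mu)$ and the identity $\mathrm{maj}(T)=\mathrm{maj}(T\setminus n)+(n-1)[n-1\in D(T)]$ reduces Schur positivity to the combinatorial inequality
\[
\#\{T\in\mathrm{SYT}(\lambda):\mathrm{maj}(T\setminus n)\equiv 2^{k_{n-1}}\!\!\!\pmod{n-1}\} \;\geq\; \#\{T\in\mathrm{SYT}(\lambda):\mathrm{maj}(T)\equiv 2^{k_n}\!\!\!\pmod n\}
\]
for every $\lambda\vdash n$. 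As an organizing strategy I would split on the $2$-adic valuation of $n$: when $n$ is odd, write
\[
p_1 Lie_{n-1}^{(2)} - Lie_n = \omega(W_n) + p_1\bigl(Lie_{n-1}^{(2)} - Lie_{n-1}\bigr)
\]
to isolate the classical Whitehouse module $W_n$ (whose $\omega$-image is Schur-positive), and control the correction via plethystic identities such as $\omega(Lie_{2m})-Lie_{2m}=Lie_m[p_2]$ for $m$ odd; when $n$ is even but not a power of $2$, $n-1$ is odd so $Lie_{n-1}^{(2)}=Lie_{n-1}$, and an analogous decomposition shifts the burden onto $\omega(Lie_n)-Lie_n^{(2)}$, which is accessible through the Ramanujan-sum expansion $Lie_n^{(2)}=\tfrac{1}{n}\sum_{d\mid n}c_d(2^{k_n})p_d^{n/d}$, with $c_d(2^{k_n})=\mu(d')\phi(2^b)$ for $d=2^b d'$ and $d'$ odd.

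\emph{Main obstacle.} The chief difficulty is establishing the combinatorial inequality uniformly. The decompositions above reduce matters to controlling cancellations against the Whitehouse module, but the correction terms are themselves not Schur-positive and the required cancellations must occur uniformly across all $2$-adic valuations of $n$ and $n-1$. The most promising tool appears to be the cyclic sieving phenomenon governing the major-index statistic modulo $n$ on $\mathrm{SYT}(\lambda)$: it furnishes closed-form expressions for both counts as evaluations of $q$-hook-length polynomials at roots of unity, and the task becomes a direct analytic comparison of these evaluations. Converting the desired inequality into such a comparison, or alternatively constructing a promotion-based injection on SYTs that works for every non-power-of-$2$ value of $n$, is where I anticipate the bulk of the technical work.
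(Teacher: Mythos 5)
For the direction the paper actually proves (when $n$ is a power of $2$ the difference fails to be Schur-positive), your argument is the same as the paper's: both isolate the multiplicity of the trivial representation, using that $Lie_{n-1}^{(2)}=Lie_{n-1}$ contains no trivial (since $n-1\geq 3$ is odd) while $Lie_n^{(2)}=C\!onj_n$ contains it once, giving coefficient $-1$ for $s_{(n)}$. You add one genuinely useful refinement the paper omits: for $n=2$ the quantity $p_1Lie_1^{(2)}-Lie_2^{(2)}=p_1^2-h_2=e_2$ \emph{is} Schur-positive, so the conjecture as literally stated fails at $n=2$ and must implicitly exclude that case; the paper's phrase ``$Lie_n$ never contains the trivial representation'' silently requires $n\geq 2$, which your formulation via the M\"obius sum $\tfrac{1}{n-1}\sum_{d\mid n-1}\mu(d)$ makes explicit.

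For the sufficiency direction, be aware that the paper does \emph{not} prove it: the statement is a conjecture, verified in Maple only up to $n=32$, with exactly one implication established. Your Krakiewicz--Weyman / Whitehouse-module / cyclic-sieving sketch is a reasonable line of attack, and you are candid that the core combinatorial inequality
\[
\#\{T\in\mathrm{SYT}(\lambda):\mathrm{maj}(T\setminus n)\equiv 2^{k_{n-1}}\!\!\pmod{n-1}\}\;\geq\;\#\{T\in\mathrm{SYT}(\lambda):\mathrm{maj}(T)\equiv 2^{k_n}\!\!\pmod n\}
\]
remains unproved. That is consistent with the state of the problem in the paper; you have not overlooked an argument that the author possesses. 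One small correction to the reduction: your decomposition in the ``$n$ odd'' case, $p_1Lie_{n-1}^{(2)}-Lie_n=\omega(W_n)+p_1(Lie_{n-1}^{(2)}-Lie_{n-1})$, does not use the Whitehouse module with the orientation consistent with the definition quoted (${\rm ch\,}W_n=p_1\omega(Lie_{n-1})-\omega(Lie_n)$, i.e.\ $\omega$ applied \emph{inside} rather than a simple sign-tensor of $p_1Lie_{n-1}-Lie_n$); you would need to track the $\omega$ more carefully before that decomposition is literally correct. Since this portion is a sketch of a conjectural direction, it does not affect the verdict, but it should be fixed before attempting to push the argument through.
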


One direction of this conjecture is easy to verify.  Let $n=2^k.$ Then 
$n-1$ is odd, so $Lie_{n-1}^{(2)}=Lie_n.$ Also $Lie_n^{(2)}={\rm ch\,} \textbf{ 1}\uparrow_{C_n}^{S_n}= C\!onj_n,$ i.e. $Lie_n^{(2)}$ is just the permutation module afforded by the conjugacy action on the class of $n$-cycles of $S_n.$  Consequently it contains the trivial representation (exactly once). But it is well known that $Lie_n$ never contains the trivial representation, and hence, when $n$ is a power of 2,  the trivial module appears with negative multiplicity $(-1)$ in $p_1 Lie_{n-1}^{(2)}- Lie_n^{(2)}.$ 

\section{The Frobenius characteristic of $Lie_n^{(2)}$}

In this section we will derive the key symmetric function identities satisfied by the Frobenius characteristic of the module $Lie_n^{(2)},$ thereby proving its intriguing parallelism with $Lie_n.$

 We begin with a general theorem of Foulkes  on the character values of representations induced from the cyclic subgroup $C_n$ of $S_n,$ 
asserting Part (1) of the following (see also \cite[Ex. 7.88]{St4EC2}). We refer the reader to \cite{St4EC2} for the definition of the major index statistic on tableaux.  

\begin{thm}\label{Foulkes}   Let $\ell_n^{(r)}$ denote the Frobenius characteristic of the induced representation $\exp\left(\frac{2i\pi}{n}\cdot r\right)\big\uparrow_{C_n}^{S_n},$  $1\leq r  \leq n.$  Then 
\begin{enumerate}
\item (Foulkes) \cite{F}
$$\ell_n^{(r)}=\dfrac{1}{n}  \sum_{d|n} \phi(d) \dfrac{\mu(\tfrac{d}{(d, r)})} {\phi(\tfrac{d}{(d, r)})} p_d^\frac{n}{d}.$$
\item (Stanley; Kr\'{a}skiewicz and Weyman) (\cite{KW},  \cite{St4EC2}; see also \cite{R}) The multiplicity of the Schur function $s_\lambda$ in the Schur function expansion of $\ell_n^{(r)}$ is the number of standard Young tableaux of shape $\lambda$ with major index congruent to $r$ modulo $n.$
\end{enumerate}
\end{thm}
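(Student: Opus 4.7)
The plan is to prove Part (1) by a direct Frobenius induction calculation combined with the closed-form evaluation of Ramanujan sums, and to prove Part (2) by interpreting the resulting Schur coefficient as a discrete Fourier transform of the major-index generating function on $\mathrm{SYT}(\lambda)$.

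For Part (1), I would start from the general identity
$$\mathrm{ch}(\chi\uparrow_H^{S_n}) = \frac{1}{|H|}\sum_{h \in H}\chi(h)\, p_{\mathrm{cyctype}(h)},$$
specialized to $H = C_n = \langle c\rangle$ and $\chi(c) = \zeta^r$ with $\zeta = e^{2\pi i/n}$. Since $c^k$ has cycle type $(n/\gcd(n,k))^{\gcd(n,k)}$, collecting terms by the common cycle length $d = n/\gcd(n,k)$ yields
$$\ell_n^{(r)} = \frac{1}{n}\sum_{d \mid n} c_d(r)\, p_d^{n/d},$$
where $c_d(r) = \sum_{0 \le j < d,\,\gcd(j,d)=1} e^{2\pi i rj/d}$ is the classical Ramanujan sum. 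Substituting the Kluyver--Hölder closed form $c_d(r) = \mu(d/\gcd(d,r))\,\phi(d)/\phi(d/\gcd(d,r))$ recovers Foulkes's formula. As a sanity check, $r = n$ gives $c_d(n) = \phi(d)$ (reproducing $C\!onj_n$), while $r$ coprime to $n$ collapses to $c_d(r) = \mu(d)$ (reproducing $Lie_n$).

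For Part (2), I would use $\langle p_\mu, s_\lambda\rangle = \chi^\lambda_\mu$ together with the expansion from Part (1) to rewrite the Schur coefficient as the discrete Fourier transform
$$\langle \ell_n^{(r)}, s_\lambda\rangle = \frac{1}{n}\sum_{k=0}^{n-1}\zeta^{-rk}\,\chi^\lambda(c^k).$$
To identify this with a $\mathrm{maj}$-count, I would evaluate $s_\lambda$ at the tuple $(1,\zeta,\zeta^2,\dots,\zeta^{n-1})$ in two complementary ways. The power-sum expansion, combined with $p_k(1,\zeta,\dots,\zeta^{n-1}) = n\cdot[n \mid k]$, recovers the Fourier sum above. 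Independently, Stanley's principal specialization
$$s_\lambda(1,q,\dots,q^{n-1}) = q^{b(\lambda)}\prod_{s \in \lambda}\frac{1 - q^{n+c(s)}}{1 - q^{h(s)}}$$
together with the major-index generating function
$$\sum_{T \in \mathrm{SYT}(\lambda)} q^{\mathrm{maj}(T)} = \frac{q^{b(\lambda)}(q;q)_n}{\prod_{s \in \lambda}(1 - q^{h(s)})}$$
expresses the same evaluation as a polynomial tracking $\mathrm{maj}$. Matching the two expressions at the $n$-th roots of unity and applying Fourier inversion in $C_n$ picks out precisely the tableaux with $\mathrm{maj}(T) \equiv r \pmod n$.

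The main obstacle lies entirely in Part (2): one must verify that the content factors $(1 - q^{n+c(s)})$ and the hook denominators $(1 - q^{h(s)})$ simplify compatibly at primitive $n$-th roots of unity, so that $s_\lambda(1,\zeta,\dots,\zeta^{n-1})$ and the reduction $\sum_T q^{\mathrm{maj}(T)} \bmod (q^n-1)$ agree on the nose. This compatibility is the essence of the cyclic sieving phenomenon of Reiner--Stanton--White for the promotion action on $\mathrm{SYT}(\lambda)$ when $\lambda \vdash n$, and is precisely the content of the Kraśkiewicz--Weyman theorem invoked in the statement. By contrast, Part (1) is entirely formal once the Ramanujan-sum evaluation is in hand.
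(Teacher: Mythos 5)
The paper cites Theorem~\ref{Foulkes} from the literature (\cite{F}, \cite{KW}, \cite{St4EC2}) without proof, so there is no in-paper argument to compare against; I evaluate your proposal on its own terms. Part~(1) is correct and is the standard route: the Frobenius characteristic of the induction, grouping the powers $c^k$ of the $n$-cycle by $d = n/\gcd(n,k)$ to produce the Ramanujan sums $c_d(r)$, and then applying the Kluyver--H\"older closed form $c_d(r) = \mu(d/\gcd(d,r))\phi(d)/\phi(d/\gcd(d,r))$. Your sanity checks at $r=n$ and $\gcd(r,n)=1$ are also right.

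Part~(2), however, contains a genuine flaw. Your plan is to evaluate $s_\lambda(1,q,\dots,q^{n-1})$ at all $n$th roots of unity, match it against $f_\lambda(q) := \sum_T q^{\mathrm{maj}(T)}$ there, and Fourier-invert. But the identification $s_\lambda(1,\zeta^k,\dots,\zeta^{k(n-1)}) = \chi^\lambda(c^k)$ (equivalently $=f_\lambda(\zeta^k)$) is false for non-primitive $\zeta^k$. At $k=0$ the left side is $s_\lambda(1,\dots,1)$, the number of SSYT with entries in $\{1,\dots,n\}$, while $\chi^\lambda(\mathrm{id})=f^\lambda$ is the number of SYT; for $\lambda=(2,1)$, $n=3$, these are $8$ and $2$. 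For a nontrivial power, take $n=4$, $\lambda=(3,1)$, $k=2$: one computes $s_{(3,1)}(1,-1,1,-1)=-3$, whereas $\chi^{(3,1)}(c^2)=-1=f_{(3,1)}(-1)$. The power-sum argument you invoke does show the match at \emph{primitive} $n$th roots (only $\mu=(n)$ survives in the specialization), but Fourier inversion over $C_n$ requires the equality for \emph{all} $k=0,\dots,n-1$. The correct intermediate object is the fake degree polynomial $f_\lambda(q)$ itself, not the principal specialization of $s_\lambda$, and the identity $\chi^\lambda(c^k)=f_\lambda(\zeta^k)$ for all $k$ is Springer's theorem on regular elements (equivalently the Kra\'skiewicz--Weyman result being cited), which has to be invoked or reproved directly --- e.g.\ via the coinvariant algebra --- rather than derived from a principal-specialization identity that fails at composite $k$.
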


\noindent
\textit{Remark.}
The quantity $\phi(d) \mu(\tfrac{d}{(d, r)})/ \phi(\tfrac{d}{(d, r)}) $ in Foulkes' formula is called a \textit{Ramanujan sum}; it is the sum of the $r$th powers of the primitive $d$th roots of unity.

Thus $Lie_n$ and $C\!onj_n$ are obtained by taking $r=1$ and $r=n$ in Foulkes' theorem, while our new variant $Lie_n^{(2)}$ is the case $r=k_n,$ where $k_n$ is the highest power of 2 dividing $n.$
Note  that Part (2) provides a complete combinatorial description of the decomposition into irreducibles of $Lie_n, C\!onj_n$ and also  $Lie_n^{(2)}.$ 

Our goal in this section is to describe the symmetric and exterior powers of $Lie_n^{(2)},$  the analogues of the higher Lie modules in Section 2. 
The meta theorem, Theorem \ref{metathm}, of Section~\ref{SecMetaThms} allows us to deduce formulas for these higher $Lie_n^2$-modules quickly and elegantly, avoiding technical plethystic or cycle index calculations.  
We begin by stating three well-known results on $Lie_n$ and $C\!onj_n.$  

\begin{thm} \label{ThrallPBWCadoganSolomon}\cite{T, C, So} (See also \cite[Ex. 7.71, Ex. 7.88, Ex. 7.89]{St4EC2}.) The symmetric powers of $Lie_n$ and $C\!onj_n$ satisfy the following:
\begin{equation*}\label{ThrallPBW}  (Thrall,\ PBW) \qquad H[\sum_{n\geq 1} Lie_n](t)=(1-tp_1)^{-1}
\end{equation*}
(Decomposition of the regular representation into a sum of higher Lie modules)
 \begin{equation*}\label{Cadogan} (Cadogan)\qquad  H[\sum_{n\geq 1} (-1)^{n-1} \omega(Lie_n)](t)
=1+tp_1. \end{equation*} 
(The plethystic inverse of $\sum_{n\ge 1} h_n.$)
\begin{equation*}\label{Solomon} (Solomon)\qquad H[\sum_{n\geq 1}  C\!onj_n](t)=\prod_{n\geq 1} (1-t^np_n)^{-1}
\end{equation*}
\end{thm}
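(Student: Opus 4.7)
The plan is to prove all three identities by a single uniform plethystic computation. By Foulkes' formula (Theorem~\ref{Foulkes} above), each of $Lie_n$, $\omega(Lie_n)$, and $C\!onj_n$ has the form $\frac{1}{n}\sum_{d|n} c(n,d)\,\psi(d)\, p_d^{n/d}$ for an appropriate arithmetic function $\psi\in\{\mu,\phi\}$ and a sign $c(n,d)\in\{\pm 1\}$: one takes $\psi=\mu$, $c\equiv 1$ for $Lie_n$; $\psi=\phi$, $c\equiv 1$ for $C\!onj_n$; and $\psi=\mu$, $c(n,d)=(-1)^{n-n/d}$ (together with the outer $(-1)^{n-1}$) for $\omega(Lie_n)$. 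Setting $Q=\sum_{n\geq 1} t^n f_n$ with $f_n$ the characteristic of interest, I would compute $\log H[Q]=\sum_{i\geq 1} p_i[Q]/i$ via $H=\exp\sum_{i\geq 1}p_i/i$ and match it against the logarithm of the asserted right-hand side.

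The core of the argument is a single change of summation variables. Using the convention $p_i[t^n q_n]=t^{in}p_i[q_n]$ (consistent with the paper's grading, since $H[Q](t)=\sum_\lambda t^{|\lambda|}H_\lambda[Q]$ forces $t$ to track the total $p$-degree), one has
$$\log H[Q]=\sum_{i,n\geq 1}\frac{t^{in}}{in}\sum_{d\mid n}c(n,d)\,\psi(d)\, p_{id}^{n/d}.$$
Setting $m=id$ and $k=n/d$, the triple sum collapses to
$$\sum_{m,k\geq 1}\frac{t^{mk}p_m^k}{mk}\sum_{d\mid m}c(kd,d)\,\psi(d),$$
and the inner Dirichlet-type sum is then evaluated by a classical number-theoretic identity. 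For Thrall, $\sum_{d\mid m}\mu(d)=[m=1]$ collapses the expression to $\sum_{k\geq 1}t^k p_1^k/k=-\log(1-tp_1)$. For Solomon, $\sum_{d\mid m}\phi(d)=m$ yields $\sum_{m,k\geq 1}t^{mk}p_m^k/k=-\sum_{m\geq 1}\log(1-t^m p_m)$. Exponentiating gives the two identities on the nose.

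The main obstacle I anticipate is the sign bookkeeping in the Cadogan case. Combining the outer $(-1)^{n-1}$ with $\omega(p_d^{n/d})=(-1)^{n-n/d}p_d^{n/d}$ produces an overall factor $(-1)^{n-1+n-n/d}=(-1)^{n/d+1}$, which under the reindexing $k=n/d$ becomes $(-1)^{k+1}$, independent of $d$. Hence it pulls cleanly out of the M\"obius sum, yielding $\sum_{k\geq 1}(-1)^{k-1}(tp_1)^k/k=\log(1+tp_1)$ and therefore $H[\sum_n (-1)^{n-1}\omega(Lie_n)](t)=1+tp_1$. Once this sign cancellation is verified, the remainder is formal. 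As a cleaner alternative, all three identities should fall out of the meta-identity (Theorem~\ref{metathm}) of Section~\ref{SecMetaThms}, which appears designed to package precisely the $(i,n,d)\mapsto(m,k)$ reindexing above, so that the three cases reduce to substituting the appropriate $\psi$ and specializing the resulting generating function.
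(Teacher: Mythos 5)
Your computation is correct: expanding $\log H[Q]=\sum_i p_i[Q]/i$, substituting the power-sum expansions from Foulkes' formula, and reindexing via $(i,n,d)\mapsto(m=id,\,k=n/d,\,d)$ so the inner divisor sum collapses to $\sum_{d\mid m}\mu(d)=[m=1]$ or $\sum_{d\mid m}\phi(d)=m$ is exactly the mechanism that Proposition~\ref{Su1Prop3.1} and Theorem~\ref{metathm} package, and the sign factor $(-1)^{k+1}$ for Cadogan does indeed pull out of the M\"obius sum as you claim. Since the paper derives these three identities by specializing that meta-theorem (with $\psi=\mu$ giving $f_m(1)=[m=1]$ and $\psi=\phi$ giving $f_m(1)=1$), and you note this yourself at the end, the proposal is essentially the paper's own route, just unwound by hand.
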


\begin{prop}\label{ExtLieConj} \cite[Theorem 4.2 and Corollary 5.2]{Su1} The exterior powers of $Lie$ and $C\!onj$ satisfy the following:
\begin{enumerate}[itemsep=8pt]
\item $E[Lie](t)=(1-t^2 p_2)(1-tp_1)^{-1}$
\item 
$\sum_{\lambda\in Par} (-1)^{|\lambda|-\ell(\lambda)} H_\lambda[Lie] (t)=\omega(E[\omega(Lie)^{alt}])(t)=(1+tp_1)(1-t^2p_2)^{-1}$

\item $E[\sum_{n\geq 1} C\!onj_n](t)=
\prod_{n\geq 1,\, n\, odd} (1-t^np_n)^{-1}$

\item 
$E[\sum_{n\geq 1} (-1)^{n-1} \omega(C\!onj_n)](t)=
\prod_{n\geq 1,\, n\, odd} (1+t^np_n)$

\end{enumerate}
\end{prop}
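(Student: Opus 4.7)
The plan is to prove all four identities uniformly via plethystic logarithm applied to the Foulkes-type formulas
\[Lie_n = \frac{1}{n}\sum_{d\mid n}\mu(d)\, p_d^{n/d},\qquad C\!onj_n = \frac{1}{n}\sum_{d\mid n}\phi(d)\, p_d^{n/d},\]
followed by a reindexing that collapses the resulting triple sum into a double sum with an inner divisor sum, which in turn vanishes outside a few small values of $n$ thanks to standard number-theoretic identities. The factorisation $E[\sum_i q_i]=\prod_i E[q_i]$ (from the addition formula $e_n[f+g]=\sum_{i+j=n} e_i[f]e_j[g]$, and similarly for $H$) commutes the generating function past the sum $Lie=\sum Lie_i$, and treating $t$ as a variable with $p_k[t]=t^k$ keeps the bookkeeping clean so that $h_r[t^iq_i]=t^{ir}h_r[q_i]$.

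First I would set up the template. Applying $\log E=\sum_{k\geq 1}(-1)^{k-1}p_k/k$ and then plethysm gives
\[\log E[Lie](t)=\sum_{k,i\geq 1}\frac{(-1)^{k-1}t^{ki}}{ki}\sum_{d\mid i}\mu(d)\, p_{kd}^{i/d},\]
and the substitution $n=kd$, $j=i/d$ collapses the triple sum to
\[\log E[Lie](t)=\sum_{n,j\geq 1}\frac{t^{nj}p_n^j}{nj}\, B(n),\qquad B(n)=\sum_{k\mid n}(-1)^{k-1}\mu(n/k).\]
Using $\sum_{d\mid n}\mu(d)=[n=1]$ together with $\mu(2^a)=0$ for $a\geq 2$, and splitting divisors by the $2$-adic valuation of $k$, a short case analysis gives $B(1)=1$, $B(2)=-2$, and $B(n)=0$ for $n\geq 3$; exponentiating yields $E[Lie](t)=(1-t^2p_2)(1-tp_1)^{-1}$, proving (1). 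The same argument with $\phi$ in place of $\mu$, using $\sum_{d\mid n}\phi(d)=n$ and $\phi(2^am)=2^{a-1}\phi(m)$ for $a\geq 1$ and $m$ odd, gives $B_\phi(n)=n$ for $n$ odd and $B_\phi(n)=0$ for $n$ even, proving (3). The analogous computation with $\log H$ in place of $\log E$ recovers Thrall's and Solomon's formulas and confirms the consistency of the setup.

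For (2) and (4) the same template applies after absorbing the signs. In (2), writing $(-1)^{|\lambda|-\ell(\lambda)}=\prod_i(-1)^{(i-1)m_i}$ lets us factor the alternating sum as $\prod_i\sum_m((-1)^{i-1}t^i)^m h_m[Lie_i]$; taking logs then replaces the inner divisor sum by $(-1)^{jn}\sum_{d\mid n}(-1)^{n/d}\mu(d)$, and the same $2$-adic analysis shows the only nonzero contributions occur at $n=1$ (giving $\log(1+tp_1)$) and at $n=2$ (giving $-\log(1-t^2p_2)$). For (4), the identity $\omega(p_k)=(-1)^{k-1}p_k$ combines with the outer $(-1)^{i-1}$ so that the accumulated signs on $(-1)^{i-1}\omega(C\!onj_i)$ collapse, after the reindexing, into a single factor $(-1)^{j+1}$ independent of $d$, whereupon the $B_\phi$-computation yields $\log\prod_{n\,\mathrm{odd}}(1+t^np_n)$. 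The intermediate form $\omega(E[\omega(Lie)^{\mathrm{alt}}])(t)$ in (2) follows by tracking $\omega$ through the plethysm on the $p$-basis. The main obstacle is organising the various signs so that the $d$-dependence cancels cleanly in (2) and (4); once the correct sign organisation is in place, all four identities fall out of the single reindexing-plus-divisor-sum template.
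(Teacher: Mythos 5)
Your proof is correct, and it takes the same conceptual route that the paper (and \cite{Su1}, which the paper cites for this proposition) uses: expand $\log E$ or $\log H$ in power sums, push the plethysm through the Foulkes-type formula for $Lie_n$ or $C\!onj_n$, reindex the triple sum, and evaluate a divisor sum. The difference is one of packaging rather than substance. The paper invokes Theorem \ref{metathm}, which absorbs exactly this reindexing into the product formula $\prod_{m\ge1}(1-p_m)^{\pm f_m(\pm v)}$, after which the proposition reduces to computing the scalar $f_n(\pm 1)=\tfrac1n\sum_{d\mid n}\psi(d)(\pm 1)^{n/d}$ for $\psi=\mu$ and $\psi=\phi$; your quantity $B(n)=\sum_{k\mid n}(-1)^{k-1}\mu(n/k)$ is precisely $-n\,Lie_n(-1)$, and likewise $B_\phi(n)=-n\,C\!onj_n(-1)$. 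So you are in effect re-proving the $v=1$ case of the metatheorem in-line. That is a perfectly valid choice, and your divisor-sum evaluations are right: $B(1)=1$, $B(2)=-2$, $B(n)=0$ otherwise; $B_\phi(n)=n$ for odd $n$ and $0$ otherwise; and the signed variant $\sum_{d\mid n}(-1)^{n/d}\mu(d)$ equals $-1,2,0,\dots$. The one place that deserves extra care, which you flagged, is the sign organisation in (2) and (4). I checked it: in (2), $((-1)^{i-1})^k=(-1)^{nj}(-1)^{n/d}$ under $n=kd$, $j=i/d$, so the $j$-dependent $(-1)^{nj}$ fold into the inner geometric series as $(-tp_1)$ for $n=1$ and leaves $(t^2p_2)$ unchanged for $n=2$; in (4), $\omega(p_d^{i/d})=(-1)^{i-i/d}p_d^{i/d}$ combined with the outer $(-1)^{i-1}$ produces the factor $(-1)^{n/d-1}$ matching $B_\phi$, plus a residual $(-1)^{j-1}$ that converts $-\log(1-t^np_n)$ into $\log(1+t^np_n)$. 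Your identification of the middle expression $\omega(E[\omega(Lie)^{alt}])$ in (2) is also correct and follows from the rule $\omega(f[g])=f[\omega g]$ for $\deg g$ even and $\omega(f[g])=(\omega f)[\omega g]$ for $\deg g$ odd, together with $e_m[-q]=(-1)^m h_m[q]$. In short: correct, and same method as the paper's metatheorem proof, executed from scratch rather than by citation.
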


Recall that write $Lie_n^{(2)}$ for the Frobenius characteristic of the representation.  Also recall the definition of the polynomial $Lie_n^{(2)}(t)$ from ~\eqref{definepolyf_n}.
\begin{lem}\label{Liesup2atPlusminus1}  Let $n=2^\alpha\ell$ where $\ell$ is odd.  We have
\begin{equation}\label{Liesup2-1}Lie_n^{(2)}=\frac{1}{n}\sum_{s=0}^\alpha \sum_{\stackrel{d=2^s d_1}{d_1|\ell}} \phi(2^s)\mu(d_1) p_{d_1}^{\frac{n}{d_1}},\end{equation}
\begin{equation}\label{Liesup2-2} Lie_n^{(2)}(1)=\begin{cases} 1, & n=2^\alpha \text{ for some }\alpha\geq 0,\\
0, &\text{ otherwise.}\end{cases}
\end{equation}
\begin{equation}\label{Liesup2-3} Lie_n^{(2)}(-1)=\begin{cases} -1, & n=1,\\
0, &\text{ otherwise.}\end{cases}
\end{equation}
\end{lem}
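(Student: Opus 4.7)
The plan is to invoke Foulkes' formula (Theorem~\ref{Foulkes}(1)) with $r = k_n = 2^\alpha$, since by Definition~\ref{defLieSup2}, $Lie_n^{(2)} = \ell_n^{(2^\alpha)}$. First I would parameterize each divisor $d$ of $n = 2^\alpha \ell$ uniquely as $d = 2^s d_1$ with $0 \le s \le \alpha$ and $d_1 \mid \ell$ (hence $d_1$ odd). For such $d$ one has
$$\gcd(d, 2^\alpha) = 2^s, \qquad d/\gcd(d, 2^\alpha) = d_1, \qquad \phi(d) = \phi(2^s)\phi(d_1),$$
so that the Ramanujan coefficient in Foulkes' formula simplifies cleanly to
$$\phi(d)\,\frac{\mu(d/\gcd(d,2^\alpha))}{\phi(d/\gcd(d,2^\alpha))} = \phi(2^s)\,\mu(d_1).$$
Substituting into Theorem~\ref{Foulkes}(1) and reindexing the divisor sum over pairs $(s, d_1)$ yields (\ref{Liesup2-1}).

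For the polynomial identities I would use definition (\ref{definepolyf_n}), which amounts to replacing $p_d^{n/d}$ by $t^{n/d}$ in the expansion just obtained and evaluating at $t = \pm 1$. The arithmetic ingredients are the Möbius identity $\sum_{d_1 \mid \ell} \mu(d_1) = [\ell = 1]$ and the telescoping sum $\sum_{s=0}^{\alpha} \phi(2^s) = 2^\alpha$ (valid for all $\alpha \ge 0$). At $t = 1$ the sign factor disappears and the double sum factors, giving
$$Lie_n^{(2)}(1) = \frac{1}{n}\Bigl(\sum_{s=0}^\alpha \phi(2^s)\Bigr)\Bigl(\sum_{d_1 \mid \ell}\mu(d_1)\Bigr) = \frac{2^\alpha}{n}[\ell = 1],$$
which equals $1$ exactly when $\ell = 1$ (i.e.\ $n = 2^\alpha$) and $0$ otherwise; this is (\ref{Liesup2-2}).

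For (\ref{Liesup2-3}) the sign $(-1)^{n/d} = (-1)^{2^{\alpha - s}\ell/d_1}$ depends on the $2$-adic valuation of $n/d$: it equals $+1$ whenever $s < \alpha$ (since then $n/d$ is even) and $-1$ precisely when $s = \alpha$ (since then $n/d = \ell/d_1$ is odd). Splitting the outer sum at $s = \alpha$, the Möbius identity again forces $\ell = 1$, and the expression collapses to
$$Lie_n^{(2)}(-1) = \frac{[\ell = 1]}{n}\left(\sum_{s=0}^{\alpha - 1}\phi(2^s) - \phi(2^\alpha)\right).$$
For $\alpha \ge 1$ both quantities equal $2^{\alpha - 1}$ and cancel; for $\alpha = 0$ the sum over $s < \alpha$ is empty and the remaining term contributes $-\frac{1}{n}[\ell = 1]$, which is $-1$ exactly when $n = 1$. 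The main (mild) obstacle is the edge-case bookkeeping in this last computation, in particular handling $\alpha = 0$ separately; otherwise the argument is a direct unpacking of Foulkes' formula together with the standard Möbius and totient identities.
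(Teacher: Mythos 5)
Your argument is correct and matches the paper's proof of (\ref{Liesup2-1}) and (\ref{Liesup2-2}) step for step: write each divisor as $d=2^s d_1$ with $d_1\mid\ell$ odd, use multiplicativity of $\phi$ and $\mu$ to reduce the Ramanujan coefficient to $\phi(2^s)\mu(d_1)$, and at $t=1$ factor the sum into a totient sum giving $2^\alpha$ and a M\"obius sum giving $[\ell=1]$. For (\ref{Liesup2-3}) you take the route the paper explicitly declines: the paper invokes Proposition~\ref{metaf}, which expresses $f_n(-1)$ through $f_n(1)$ (and, for even $n$, $f_{n/2}(1)$), so that (\ref{Liesup2-3}) falls out of (\ref{Liesup2-2}) with no further arithmetic; you instead track the parity of $n/d=2^{\alpha-s}\ell/d_1$ directly and split off the $s=\alpha$ layer. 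The paper names this very alternative (``or directly by a more cumbersome case-by-case calculation''), so the two derivations are acknowledged equivalents, the meta-result version being shorter and recycling general machinery, your direct version being self-contained at the cost of edge-case bookkeeping at $\alpha=0$. One remark worth recording: the displayed formula (\ref{Liesup2-1}) carries $p_{d_1}^{n/d_1}$, but Foulkes' formula and your own $t=-1$ computation both require $p_d^{n/d}=p_{2^s d_1}^{\,n/(2^s d_1)}$; this appears to be a typo in the paper's display, and your argument implicitly works with the correct exponent throughout.
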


\begin{proof}  Equation~\eqref{Liesup2-1} follows directly from Foulkes' formula, by factoring the highest power of 2 out of each divisor $d$ of $n,$ and using the multiplicativity of $\phi$ and $\mu.$ 
Hence we have 
\[Lie_n^{(2)}(1)=\frac{1}{n} \sum_{d_1|\ell} \mu(d_1) (1+\sum_{s=1}^\alpha \phi(2^s))=\frac{1}{n} \sum_{d_1|\ell} \mu(d_1) (1+\sum_{s=1}^\alpha 2^{s-1})=\frac{2^\alpha}{n}\sum_{d_1|\ell} \mu(d_1).\]
The last sum is nonzero if and only if $\ell=\frac{n}{2^\alpha}=1.$
Equation~\eqref{Liesup2-3} now follows immediately by invoking the meta-result of Proposition~\ref{metaf}, which says that 
$$Lie_{2n+1}^{(2)}(-1)=-Lie_{2n+1}^{(2)}(1),\quad 
Lie_{2n}^{(2)}(-1)=Lie_{n}^{(2)}(1)-Lie_{2n}^{(2)}(1),$$ 
(or directly by a  more cumbersome case-by-case calculation).
\end{proof}
\begin{thm}\label{PlInv-E} Let $Lie^{(2)}_n$ be the Frobenius characteristic of the induced representation $\exp(\frac{2i\pi}{n}\cdot 2^k)\large\uparrow_{C_n}^{S_n},$  where $k$ is the largest power of 2 which divides $n.$ Then we have the following generating functions:
\begin{enumerate}[itemsep=8pt]
\item \label{ExtLieSup2Reg}\qquad (Exterior powers) \qquad\qquad\qquad\qquad
$ E\left[\sum_{n\geq 1}  Lie^{(2)}_n\right ](t) = (1-tp_1)^{-1}.$

\text{(Alternating symmetric powers)}
\qquad\quad
$H^{\pm}\left[\sum_{n\geq 1} Lie^{(2)}_n\right](t)=1-tp_1.$

That is,  $\sum_{n\geq 1} Lie^{(2)}_n$ is the plethystic inverse of $\sum_{n\geq 1} (-1)^{n-1} h_n.$ 

\item\label{PlInv-ELieSup2}$E\left[\sum_{n\geq 1} (-1)^{n-1}\omega(Lie^{(2)}_n)\right](t)=1+tp_1.$

That is, the  plethystic inverse of $\sum_{n\geq 1} e_n$ is given by 
\begin{center}$\sum_{n\geq 1} (-1)^{n-1}\omega(Lie^{(2)}_n).$
\end{center}

\item\label{SymLieSup2} (Symmetric powers and higher $Lie_n^{(2)}$-modules) 

$H\left[\sum_{n\geq 1}  Lie^{(2)}_n\right](t)
=\prod_{n=2^k, k\geq 0} (1-t^n p_n)^{-1}
=\sum_{{\lambda\in Par}\atop{\text{every part is a power of } 2}} t^{|\lambda|}p_\lambda.$

\item\label{SymAltLieSup2} (Alternating Exterior Powers)

\noindent
$\sum_{\lambda\in Par} (-1)^{|\lambda|-\ell(\lambda)} \omega(E_\lambda[Lie^{(2)}]) (t) =H\left[\sum_{n\geq 1} (-1)^{n-1}\omega(Lie^{(2)}_n)\right](t)$

$=\prod_{n=2^k, k\geq 0} (1+t^n p_n) =\sum_{{\lambda\in D\!Par}\atop{ \text{every part is a power of } 2}}t^{|\lambda|} p_\lambda.$
\end{enumerate}
\end{thm}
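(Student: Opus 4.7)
The plan is to apply the meta-theorem (Theorem \ref{metathm} of Section~\ref{SecMetaThms}) directly to the sequence $f_n = Lie_n^{(2)}$, feeding in the polynomial evaluations already computed in Lemma \ref{Liesup2atPlusminus1}. The meta-theorem supplies closed-form infinite-product expressions for the generating functions $H[F](t)$ and $E[F](t)$ (and their $\omega$-twists) attached to any sequence $f_n=\frac{1}{n}\sum_{d\mid n}\psi(d)p_d^{n/d}$, with exponents governed by the polynomial values $f_n(1)$ and $f_n(-1)$. Once the values
\[ Lie_n^{(2)}(1) = \begin{cases} 1, & n = 2^k,\\ 0, & \text{otherwise,} \end{cases} \qquad Lie_n^{(2)}(-1) = \begin{cases} -1, & n = 1,\\ 0, & n \geq 2, \end{cases} \]
are substituted, each product collapses to the desired right-hand side.

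\textbf{Parts (1) and (3).} For (1), the $E$-plethysm formula of the meta-theorem only picks up nonzero contributions from those $n$ with $Lie_n^{(2)}(-1)\neq 0$. By the lemma this leaves only $n=1$, and the value $-1$ produces the single factor $(1-tp_1)^{-1}$. The companion identity for $H^{\pm}[F](t)$ then follows at once from the universal plethystic relation $E[F](t)\cdot H^{\pm}[F](t)=1$, which is the image under plethysm of the classical identity $\prod_i (1+tx_i)\cdot \prod_i (1+tx_i)^{-1}=1$. For (3), the $H$-plethysm formula of the meta-theorem receives contributions only from $n=2^k$, each producing a factor $(1-t^{2^k}p_{2^k})^{-1}$; expanding each as a geometric series in $t^{2^k}p_{2^k}$ and collecting monomials yields the sum over partitions all of whose parts are powers of $2$.

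\textbf{Parts (2) and (4).} These concern the twisted sequence $(-1)^{n-1}\omega(Lie_n^{(2)})$. By the formula for $\omega(f_n)$ recorded in equation \eqref{definef_n}, the associated polynomial of this twisted sequence is obtained from $Lie_n^{(2)}(t)$ essentially by $t\mapsto -t$ (up to an overall sign), so evaluations at $\pm 1$ are interchanged. Applying the meta-theorem to the twisted sequence and substituting the known $Lie_n^{(2)}(\pm 1)$ then produces the two identities: for (2), only the $n=1$ factor survives and gives $1+tp_1$; for (4), the surviving factors are exactly those with $n=2^k$ and assemble into $\prod_{k\geq 0}(1+t^{2^k}p_{2^k})$, whose expansion is the sum over partitions with distinct parts that are powers of $2$.

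\textbf{Main obstacle.} All four identities are essentially automatic consequences of the meta-theorem once Lemma \ref{Liesup2atPlusminus1} is in hand, so no additional plethystic calculation is required. The only real work is the bookkeeping of signs and conventions when matching $\omega$-twisted sequences and the alternating factor $(-1)^{n-1}$ against the two parallel (symmetric versus exterior) branches of the meta-theorem, and confirming that the interchange of roles of $f_n(1)$ and $f_n(-1)$ under the twist is consistent with the product formulas. Once this correspondence is tabulated, parts (1)--(4) each reduce to a single substitution.
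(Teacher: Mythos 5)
Your approach for parts (1) and (3) is exactly the paper's: apply equations \eqref{metaExt} and \eqref{metaSym} of Theorem \ref{metathm} with $F=Lie^{(2)}$, read off the exponents from the values $Lie_m^{(2)}(\pm 1)$ computed in Lemma \ref{Liesup2atPlusminus1}, and (for the second line of (1)) invoke the reciprocal relation $E[F]\cdot H^{\pm}[F]=1$ (Lemma \ref{pm}, essentially the $\psi\colon$-translated form of \eqref{HE}). That part is fine.

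The explanation you give for parts (2) and (4) has a real flaw, even though the formulas you ultimately cite are correct. You propose to treat $(-1)^{n-1}\omega(Lie_n^{(2)})$ as a new sequence, observe that its associated polynomial is $-Lie_n^{(2)}(-t)$, and then ``apply the meta-theorem to the twisted sequence.'' But the twisted sequence is \emph{not} of the required form \eqref{definef_n}: from the displayed formula for $\omega(f_n)$ one gets
$(-1)^{n-1}\omega(f_n)=-\frac{1}{n}\sum_{d\mid n}\psi(d)(-1)^{n/d}p_d^{n/d}$, and the factor $(-1)^{n/d}$ depends on $n$ as well as $d$, so it cannot be absorbed into a single number-theoretic function $\widetilde\psi(d)$. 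Hence the hypotheses of Theorem \ref{metathm} do not hold for the twisted sequence, and if one nonetheless applied \eqref{metaSym} or \eqref{metaExt} with the swapped polynomial values, the products would come out with $(1-p_m)^{\pm1}$ factors, not the $(1+p_m)$ factors that actually appear in (2) and (4). The correct step --- which is the one the paper takes, and which is what your stated answers actually correspond to --- is simply to use the ``alternating'' branches \eqref{metaAltExt} and \eqref{metaAltSym} of the meta-theorem, which already express $H[\omega(F)^{alt}]$ and $E[\omega(F)^{alt}]$ as $\prod_m(1+p_m)^{f_m(v)}$ and $\prod_m(1+p_m)^{-f_m(-v)}$ in terms of the \emph{original} polynomial values $Lie_m^{(2)}(\pm 1)$; no twisted polynomial is ever needed. (The paper also invokes Lemma \ref{pmalt} to record that the second line of (1) is logically equivalent to (2), and (3) to (4); you derive each one independently, which is also acceptable.)
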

\begin{proof}  We apply the meta theorem Theorem~\ref{metathm} to the sequence of symmetric functions $f_n=Lie_n^{(2)}$. All of these identities follow immediately thanks to the values of $Lie_n^{(2)}(t)$ at $t=\pm 1$ given by the preceding lemma.
For the equivalence of the second equation in (1) and the equation (2), we invoke Lemma \ref{pmalt},  which also gives the equivalence of (3) and (4).  
\end{proof}
\noindent
\textit{Remark.}
Observe that Part (3) of the above also gives the description of $Lie_n^{(2)}$ mentioned in the Introduction: If we form the row sums in the character table of $S_n$ corresponding to conjugacy classes of type $\lambda$, where each part of $\lambda$ is a power of 2, then those row sums are nonnegative and produce the representation obtained by symmetrising $Lie_n^{(2)}.$
The theorems about the variant $Lie_n^{(2)} $ in Section 2 now follow easily. 
\vskip .1in
\noindent
\noindent
{\bf Proof of Theorem \ref{Compare}:}

\begin{proof} 
The $Lie^{(2)}$  identities are all restatements of Theorem \ref{PlInv-E} above, using the definition of $H_\lambda$ and $E_\lambda.$  Likewise the $Lie$ identities are all restatements of known results, seeTheorem \ref{ThrallPBWCadoganSolomon} and Theorem~\ref{EquivPBW}.
The first equation in (\ref{TotalCoh}) is, for instance,  a restatement of the first equation of Proposition \ref{ExtLieConj}. 
The statement about the equivalence of the $Lie$ (respectively, $Lie^{(2)}$) identities is a consequence of  Proposition \ref{metaequiv}.
\end{proof}
\vskip .1in
\noindent
{\bf Proof of Theorem \ref{EquivLieSup2}:}
\begin{proof}  Equation (\ref{ExtLieSup2}) is the identity  $E[Lie^{(2)}]=(1-p_1)^{-1}$ of Part (1) of Theorem~\ref{PlInv-E}, and hence by Lemma \ref{pm}, we obtain $H^{\pm}[Lie^{(2)}]=1-p_1,$ which is equation (\ref{PlInvLieSup2}) after removing the constant term and adjusting signs.

Now invoke (\ref{metage2d}) of Theorem \ref{metage2}, Section~\ref{SecMetaThms}.  We have 
$$H^{\pm}[Lie_{\geq 2}^{(2)} ]=E\cdot (1-p_1)
=1+ \sum_{n\geq 2} (e_n-e_{n-1} p_1)
= 1-\omega(\kappa), $$
and this is precisely equation (\ref{Extge2}), again after cancelling the constant term and adjusting signs.  The remaining statements follow exactly as in Theorem \ref{EquivPBW}, Section~\ref{SecMetaThms}.
\end{proof}
\vskip.1in
\noindent 
{\bf Proof of Proposition \ref{AltHLieAltELieSup2}:}
\begin{proof}
Parts (1) and (2) are respectively restatements of Part (2) of Proposition \ref{ExtLieConj}, and  Part (4) of Theorem \ref{PlInv-E}.
\end{proof}

\noindent

The following observation allows us to compute the character values of  $Lie_n^{(2)}$ directly from those of $Lie_n.$  

\begin{thm}\label{LietoLiesup2}  $Lie_n^{(2)}$ is the degree $n$ term in the plethysm
$\sum_{k\geq 0} Lie[p_{2^k}],$ and $Lie_n$ is the degree $n$ term in $Lie^{(2)}-Lie^{(2)}[p_2].$  In particular 
$Lie_n^{(2)}=Lie_n$ if $n$ is odd, and coincides with the sign tensored with $Lie_n$ if $n$ is twice an odd number.
\end{thm}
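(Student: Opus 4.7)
The strategy is to prove the first plethystic identity, $Lie_n^{(2)} = \left[\sum_{k\geq 0} Lie[p_{2^k}]\right]_n$, by a direct power-sum expansion matched against Foulkes's formula (Theorem \ref{Foulkes}); the other claims follow easily.

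From the classical expression $Lie_m = \frac{1}{m}\sum_{d\mid m}\mu(d)\, p_d^{m/d}$ and the plethystic rule $p_d[p_e] = p_{de}$, one has $Lie_m[p_{2^k}] = \frac{1}{m}\sum_{d\mid m}\mu(d)\, p_{2^k d}^{m/d}$. Writing $n = 2^\alpha \ell$ with $\ell$ odd, the degree-$n$ part of $\sum_{k\geq 0} Lie[p_{2^k}]$ therefore picks out pairs $(k,m)$ with $0\leq k\leq \alpha$ and $m = n/2^k$. The key step is to reindex the resulting double sum by the $2$-adic valuation $s$ of the power-sum index: write each squarefree $d$ as $2^\epsilon d_1$ with $d_1\mid\ell$ odd and $\epsilon\in\{0,1\}$, set $s = k+\epsilon$, and exploit $\mu(2d_1) = -\mu(d_1)$ for odd $d_1$. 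For $s\geq 1$, the pairs $(\epsilon,k) = (0,s)$ and $(1,s-1)$ combine to give coefficient $\frac{\mu(d_1)}{n}(2^s - 2^{s-1}) = \frac{\phi(2^s)\mu(d_1)}{n}$ on $p_{2^s d_1}^{n/(2^s d_1)}$; for $s=0$ only $(0,0)$ contributes, with coefficient $\frac{\phi(1)\mu(d_1)}{n}$. The resulting expression is exactly the formula for $Lie_n^{(2)}$ in Lemma \ref{Liesup2atPlusminus1}, completing the first identity.

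The second identity is then a formality: applying the plethystic endomorphism $f\mapsto f[p_2]$ to the first gives $Lie^{(2)}[p_2] = \sum_{k\geq 1} Lie[p_{2^k}]$, so $Lie^{(2)} - Lie^{(2)}[p_2] = Lie[p_1] = Lie$. For the concluding clauses: if $n$ is odd then only $k=0$ can contribute in the first identity, so $Lie_n^{(2)} = Lie_n$; if $n = 2m$ with $m$ odd, then only $k\in\{0,1\}$ contribute, giving $Lie_n^{(2)} = Lie_n + Lie_m[p_2]$, and a brief check using $\omega(p_d^{n/d}) = (-1)^{n-n/d}\, p_d^{n/d}$ together with $\mu(2d') = -\mu(d')$ for odd $d'$ identifies $Lie_m[p_2]$ with $\omega(Lie_n) - Lie_n$, yielding $Lie_n^{(2)} = \omega(Lie_n)$.

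The principal obstacle is the reindexing/telescoping step, in which the factor $1/m = 2^k/n$, the sign flip $\mu(2d_1)=-\mu(d_1)$, and the accumulation of contributions indexed by $s$ must combine precisely to reproduce the Euler-$\phi$ coefficients in Foulkes's formula.
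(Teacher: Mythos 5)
Your proof is correct, and it takes a genuinely different route from the paper's. You prove the first identity by a direct power-sum expansion: expand $Lie_m[p_{2^k}]$ via the M\"obius formula $Lie_m = \tfrac{1}{m}\sum_{d\mid m}\mu(d)p_d^{m/d}$, reindex the double sum by the $2$-adic valuation $s=k+\epsilon$ of the power-sum index, and telescope the contributions $(s,0)$ and $(s-1,1)$ using $\mu(2d_1)=-\mu(d_1)$ to recover the $\phi(2^s)$ coefficient in Foulkes' formula. The reindexing is carried out correctly: for $s\ge 1$ the coefficient $2^s - 2^{s-1}=\phi(2^s)$ appears, and for $s=0$ only $\phi(1)=1$, matching the formula in Lemma~\ref{Liesup2atPlusminus1} (note that the power sum in that lemma should read $p_{2^s d_1}^{n/(2^s d_1)}$ rather than $p_{d_1}^{n/d_1}$; this is evidently a typographical slip, since the proof of that lemma computes $f_n(1)$ as $\tfrac{1}{n}\sum\psi(d)$, consistent with $p_d^{n/d}$). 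The paper instead derives the theorem as a consequence of the generating-function identity $H[Lie]=E[Lie^{(2)}]$ from Theorem~\ref{Compare}, the factorization $E=H[p_1-p_2]$, Cadogan's plethystic invertibility of $H-1$, and the observation that $p_1-p_2$ has plethystic inverse $\sum_{k\ge 0}p_{2^k}$ (the paper writes $k\ge 1$, but this is also a typo). In effect, you carry out the ``direct computation with Ramanujan sums'' that the paper explicitly flags as a possible but ``somewhat involved'' alternative, and you confirm it is quite manageable; your route has the advantage of being self-contained (not presupposing Theorem~\ref{Compare}), and you also verify the twice-odd case directly rather than citing \cite{GRR} as the paper does. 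The second identity ($Lie = Lie^{(2)} - Lie^{(2)}[p_2]$) and the odd-$n$ case are immediate in both approaches.
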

\begin{proof}  From \eqref{defHE}, it is easy to see that $E=H[p_1-p_2].$ 
By Theorem~\ref{Compare}, we have 
$H[Lie]=E[Lie^{(2)}]$. Putting these two facts together and using associativity of plethysm immediately gives 
$H[Lie]=H[(p_1-p_2)[Lie^{(2)}]],$
and hence, since power sums commute with plethysm,
\[(H-1)[Lie]=(H-1)[Lie^{(2)}-Lie^{(2)}[p_2]].\]
But $H-1$ is invertible with respect to plethysm (see Cadogan's theorem in Theorem~\ref{ThrallPBWCadoganSolomon}), so the result follows. 
It is easy to check that $p_1-p_2$ has plethystic inverse 
$\sum_{k\ge 1} p_{2^k}, $ completing the proof of the first part.  (It is also possible to prove this directly using the Frobenius characteristics, although the computation with Ramanujan sums is somewhat involved.)

The last statement is clear if $n$ is odd. Now suppose $n=2(2m-1)$ for $m\ge 1.$ Then we have $Lie_n^{(2)}= Lie_n+Lie_{2m-1} [p_2].$ 
A routine calculation shows that this coincides with $\omega(Lie_{2(2m-1)})$ (see \cite{GRR}). 
\end{proof}

This yields  the following curious $S_n$-module isomorphism,  giving a recursive definition of $Lie_n^{(2)}:$
\begin{prop}\label{FindingLieSup2}  When $n$ is even:
$$ Lie_n\oplus {\mathbf 1}_{S_2}[Lie_{\frac{n}{2}}^{(2)}]\uparrow_{S_2[S_{\frac{n}{2}}]}^{S_n}\ 
\simeq\,  Lie_n^{(2)} \oplus { \bf sgn}_{S_2}[Lie_{\frac{n}{2}}^{(2)}]\uparrow_{S_2[S_{\frac{n}{2}}]}^{S_n},$$ 
where $S_2[S_{\frac{n}{2}}]$ is the wreath product of $S_2$ with $S_{\frac{n}{2}}$  (i.e. the normaliser of $S_{\frac{n}{2}}\times S_{\frac{n}{2}}$).
If $n$ is odd, this identity simply reduces to the known fact that 
$Lie_n$ and $Lie_n^{(2)}$ coincide.
\end{prop}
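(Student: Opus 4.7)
The plan is to translate the claimed $S_n$-module isomorphism into an identity of Frobenius characteristics and deduce it directly from Theorem~\ref{LietoLiesup2}. Since the wreath product $\mathbf{1}_{S_2}[V]\uparrow_{S_2[S_{n/2}]}^{S_n}$ has characteristic $h_2[\mathrm{ch}(V)]$ and the sign version gives $e_2[\mathrm{ch}(V)]$, and since $h_2 - e_2 = p_2$, the proposition is equivalent to the symmetric function identity
\[
Lie_n^{(2)} - Lie_n \;=\; p_2[Lie_{n/2}^{(2)}] \;=\; Lie_{n/2}^{(2)}[p_2],
\]
where the last equality uses the standard fact that $p_r[f] = f[p_r]$.

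First I would invoke Theorem~\ref{LietoLiesup2}, which states $Lie^{(2)} = \sum_{k\geq 0} Lie[p_{2^k}]$. Extracting the degree-$n$ component yields
\[
Lie_n^{(2)} \;=\; \sum_{k\geq 0,\; 2^k \mid n} Lie_{n/2^k}[p_{2^k}],
\]
since $Lie[p_{2^k}]$ contributes to degree $n$ precisely when $2^k$ divides $n$. For even $n$, the $k=0$ summand is $Lie_n$ itself, so
\[
Lie_n^{(2)} - Lie_n \;=\; \sum_{k\geq 1,\; 2^k \mid n} Lie_{n/2^k}[p_{2^k}].
\]

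Next, using $p_{2^k} = p_{2^{k-1}}[p_2]$ and the associativity of plethysm, each summand becomes $Lie_{n/2^k}[p_{2^{k-1}}][p_2]$. Reindexing $j = k-1$ (so that $2^j \mid n/2$) and pulling $[p_2]$ out of the sum---legitimate because plethysm on the outer argument is a ring homomorphism---this rewrites as
\[
\Biggl(\sum_{j\geq 0,\; 2^j \mid n/2} Lie_{(n/2)/2^j}[p_{2^j}]\Biggr)[p_2] \;=\; Lie_{n/2}^{(2)}[p_2],
\]
where the final equality is Theorem~\ref{LietoLiesup2} applied at $n/2$. This establishes the required identity, and hence the isomorphism of induced modules.

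The argument is essentially mechanical; there is no serious obstacle beyond careful plethystic bookkeeping---in particular, watching that the associativity rewrite $p_{2^k} = p_{2^{k-1}}[p_2]$ and the pull-out of $[p_2]$ through a finite sum are applied correctly. For odd $n$ the wreath-product terms are undefined (since $n/2$ is not an integer) and the proposition simply reduces to the equality $Lie_n = Lie_n^{(2)}$, which is the odd-$n$ case of Theorem~\ref{LietoLiesup2}.
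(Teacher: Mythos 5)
Your proof is correct and follows the paper's intended route: reduce the module isomorphism to the symmetric function identity $Lie_n^{(2)} - Lie_n = p_2[Lie_{n/2}^{(2)}]$ using $h_2 - e_2 = p_2$, and then deduce this from Theorem~\ref{LietoLiesup2}. The only inefficiency is that you rederive the needed identity from the first statement of Theorem~\ref{LietoLiesup2} via the $p_{2^k} = p_{2^{k-1}}[p_2]$ reindexing, when the second statement of that theorem, namely that $Lie_n$ is the degree-$n$ term of $Lie^{(2)} - Lie^{(2)}[p_2]$, already hands you $Lie_n^{(2)} - Lie_n = Lie_{n/2}^{(2)}[p_2]$ directly for even $n$.
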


The module $Lie_n^{(2)}$ makes an appearance in the decomposition of the module $C\!onj_n$ of the conjugacy action on the class of $n$-cycles as well.   Again we have the following contrasting results between $Lie$ and $Lie^{(2)}$. 
\begin{thm}\label{ConjLieSup2}
\begin{equation}\label{ConjLiepowsums}\sum_n C\!onj_n=\sum_{k\geq 1}p_{k}[Lie];  \text{ equivalently, }
Lie=\sum_{k\geq 1} \mu(k) p_{k}[C\!onj]. \end{equation} 
\begin{equation}\label{ConjLieSup2oddpowsums}\sum_n C\!onj_n=\sum_{k\geq 1}p_{2k-1}[Lie^{(2)}];  \text{ equivalently, } Lie^{(2)}=\sum_{k\geq 1} \mu(2k-1) p_{2k-1}[C\!onj].\end{equation} 
\end{thm}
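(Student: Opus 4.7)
My plan is to establish both identities by direct computation in the power-sum basis using Foulkes' formula (Theorem \ref{Foulkes}), and to deduce their equivalent M\"obius-inverted forms by plethystic inversion. I will prove the first equation in \ref{ConjLiepowsums} by a classical Möbius/totient manipulation, and then reduce \ref{ConjLieSup2oddpowsums} to it via Theorem \ref{LietoLiesup2}.

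For the first equation in \ref{ConjLiepowsums}, since $Lie_m = \frac{1}{m}\sum_{d\mid m}\mu(d)\,p_d^{m/d}$ and plethysm by $p_k$ sends $p_c \mapsto p_{kc}$ in the power-sum basis, the degree-$n$ component of $\sum_{k\geq 1} p_k[Lie]$ is
\[
\sum_{k\mid n} p_k[Lie_{n/k}] \;=\; \frac{1}{n}\sum_{k\mid n}\sum_{d\mid n/k} k\,\mu(d)\,p_{kd}^{n/(kd)}.
\]
Reindexing by $e = kd$ (so that for each $e\mid n$ the inner variable ranges over $k\mid e$ with $d=e/k$) and applying the classical identity $\sum_{k\mid e} k\,\mu(e/k)=\phi(e)$, the double sum collapses to $\frac{1}{n}\sum_{e\mid n}\phi(e)\,p_e^{n/e}=C\!onj_n$, as desired.

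For \ref{ConjLieSup2oddpowsums}, I would reduce to the identity just proved. By Theorem \ref{LietoLiesup2}, $Lie = Lie^{(2)} - Lie^{(2)}[p_2]$. Set $S = \sum_{k\geq 1} p_k[Lie^{(2)}]$. Using the additivity of plethysm in its first argument and the associativity identity $p_k[f[p_2]] = (p_k[f])[p_2]$, one finds $\sum_{k\geq 1} p_k[Lie] = S - S[p_2]$. On the other hand, splitting $S = S_{\mathrm{odd}} + S_{\mathrm{even}}$ into odd- and even-indexed terms, associativity gives $S_{\mathrm{even}} = \sum_k p_{2k}[Lie^{(2)}] = p_2[S]$, and $p_2[S] = S[p_2]$ because outer and inner plethysm by $p_2$ both act as $p_c \mapsto p_{2c}$ on a power-sum expansion. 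Chaining these with the already-established first equation yields $\sum_n C\!onj_n = S_{\mathrm{odd}} = \sum_{k\geq 1} p_{2k-1}[Lie^{(2)}]$.

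The equivalent M\"obius-inverted statements in both identities then follow from plethystic inversion: $\sum_{k\geq 1} p_k$ and $\sum_{k\geq 1} \mu(k)p_k$ are mutual plethystic inverses (verified from $p_k[p_j]=p_{kj}$ and $\sum_{d\mid n}\mu(d)=[n{=}1]$), and the same computation restricted to odd positive integers shows that $\sum_{k\geq 1} p_{2k-1}$ and $\sum_{k\geq 1} \mu(2k-1)p_{2k-1}$ are also mutual plethystic inverses. I do not foresee a genuine obstacle; the main care required is in the bookkeeping of the $p_k[-]$ versus $-[p_k]$ manipulations, all of which ultimately reduce to the observation that on power-sum expansions both operations coincide with the endomorphism $p_c\mapsto p_{kc}$.
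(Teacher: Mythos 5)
Your proof is correct, but it takes a genuinely different route from the paper for the first identity \eqref{ConjLiepowsums}. You compute directly in the power-sum basis from Foulkes' formula (or equivalently from the explicit forms $Lie_m=\tfrac{1}{m}\sum_{d\mid m}\mu(d)p_d^{m/d}$ and $C\!onj_n=\tfrac{1}{n}\sum_{e\mid n}\phi(e)p_e^{n/e}$), reindex by $e=kd$, and invoke the classical arithmetic identity $\sum_{k\mid e}k\,\mu(e/k)=\phi(e)$. The paper instead works at the level of generating functions: it combines Solomon's identity $H[C\!onj]=\prod_{n\geq 1}(1-p_n)^{-1}$ with Thrall's $H[Lie]=(1-p_1)^{-1}$, uses $p_n[\cdot]$ commuting with plethysm to rewrite each factor as $H[Lie[p_n]]$, and then cancels $(H-1)$ using its plethystic invertibility (Cadogan's theorem). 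Your approach is more elementary and self-contained, needing only Foulkes and a Möbius--Euler identity, while the paper's is shorter given the structural theorems already in hand and better exposes the relationship among Thrall, Solomon, and Cadogan. For the second identity \eqref{ConjLieSup2oddpowsums} the two arguments are essentially the same: both pass through Theorem~\ref{LietoLiesup2}. The paper uses it in the compact plethystic form $Lie=(p_1-p_2)[Lie^{(2)}]$ and observes directly that $\sum_{k\geq1}p_k[p_1-p_2]=\sum_{k\geq1}p_{2k-1}$, whereas you expand it as $Lie=Lie^{(2)}-Lie^{(2)}[p_2]$, introduce $S=\sum_k p_k[Lie^{(2)}]$, and carry out the odd/even bookkeeping explicitly; the content is identical. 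Your verification of the plethystic-inverse pairs (all $p_k$, and the odd-indexed $p_{2k-1}$) matches what the paper states.
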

\begin{proof} The equivalence of the two statements in each case follows by using the fact (easily verified by direct computation) that 
$\sum_{k\ge 1} p_k$ and $\sum_{k\ge 1} \mu(k)p_k$ are plethystic inverses, as are $\sum_{k\ge 1} p_{2k-1}$ and $\sum_{k\ge 1} \mu(2k-1)p_{2k-1}.$
For the first statement, we combine the theorems of Solomon and Thrall in Theorem~\ref{ThrallPBWCadoganSolomon} as follows:
\[H[C\!onj]=\prod_{n\ge 1} p_n[(1-p_1)^{-1}]=\prod_{n\ge 1} p_n[H[Lie]]=\prod_{n\ge 1} H[Lie[p_n]]= H[\sum_{n\ge 1}Lie[p_n]].\]
Here we have used the fact that plethysm is associative, and the commutative property $p_n[f]=f[p_n]$ for power sums.
Hence $(H-1)[C\!onj]=(H-1)[\sum_{n\ge 1}Lie[p_n]].$ Now the result follows as in Theorem~\ref{LietoLiesup2},
which also gives the second part, since it 
says that 
$Lie=(p_1-p_2)[Lie^{(2)}].$  Clearly $\sum_{k\ge 1}p_k[p_1-p_2]
=\sum_{k\ge 1}p_{2k-1}.$
\end{proof}

\section{Meta theorems }\label{SecMetaThms}
In this section we review the meta theorem of \cite{Su1} giving formulas for symmetric and exterior powers of modules induced from centralisers, 
and also further develop these tools in a general setting. Theorem~\ref{metathm} below has wide-ranging applications, as shown in 
\cite{SuFPSAC2018} and \cite{SuFPSAC2019}.
We begin by recalling the following results regarding the sequence of symmetric functions $f_n$ defined in equation (\ref{definef_n}) of Section 1.
\begin{prop}\label{Su1Prop3.1}\cite[Proposition 3.1]{Su1}  Define $F(t)=\sum_{i\geq 1} t^i f_i,$   and define 
$(\omega F)^{alt}(t)=\sum_{i\geq 1} (-1)^{i-1} t^i\omega(f_i).  $  Then 
\begin{align} &F(t)= \log \prod_{d\geq 1} (1-t^d p_d)^{-\frac{\psi(d)}{d}}.\\ 
&(\omega F)^{alt}(t)= \log \prod_{d\geq 1} (1+t^d p_d)^{\frac{\psi(d)}{d}}.\end{align}
\end{prop}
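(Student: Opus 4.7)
The plan is to verify both identities by direct manipulation, substituting the definition of $f_n$ from equation~(\ref{definef_n}) into the generating function $F(t)$ and swapping the order of summation.

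For the first identity, I would start from
\[
F(t) = \sum_{n\geq 1} t^n f_n = \sum_{n\geq 1} \frac{t^n}{n}\sum_{d\mid n}\psi(d)\, p_d^{n/d},
\]
reindex via $n = dm$ (so $m = n/d$) to get
\[
F(t) = \sum_{d\geq 1}\frac{\psi(d)}{d}\sum_{m\geq 1}\frac{(t^d p_d)^m}{m},
\]
and recognise the inner sum as $-\log(1 - t^d p_d)$. Collecting the resulting logarithms into a logarithm of a product gives exactly the asserted formula for $F(t)$.

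For the second identity the skeleton is the same, but I have to track the sign carefully. Starting from
\[
(\omega F)^{alt}(t) = \sum_{n\geq 1}(-1)^{n-1}t^n\,\omega(f_n) = \sum_{n\geq 1}\frac{(-1)^{n-1}t^n}{n}\sum_{d\mid n}\psi(d)(-1)^{n - n/d} p_d^{n/d},
\]
the combined sign for a term indexed by $(d,m)$ with $n = dm$ is $(-1)^{dm-1}(-1)^{dm-m} = (-1)^{-m-1} = (-1)^{m+1}$, since $2dm$ is even. Thus
\[
(\omega F)^{alt}(t) = \sum_{d\geq 1}\frac{\psi(d)}{d}\sum_{m\geq 1}\frac{(-1)^{m+1}(t^d p_d)^m}{m},
\]
and the inner sum is now $\log(1 + t^d p_d)$, yielding the second formula.

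There is no real obstacle: both identities are essentially the Taylor expansions of $-\log(1-x)$ and $\log(1+x)$, applied term-by-term after Fubini. The only delicate point, and the place where I would double-check, is the sign bookkeeping in the second calculation — in particular confirming that the $(-1)^{n-1}$ factor combines cleanly with the $(-1)^{n-n/d}$ sign coming from $\omega$ to give the alternating series for $\log(1+x)$ rather than something that depends on the parity of $d$.
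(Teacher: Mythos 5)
Your proof is correct: the reindexing $n=dm$, recognition of the logarithmic series, and the sign computation $(-1)^{dm-1}(-1)^{dm-m}=(-1)^{m-1}$ all check out, and this direct verification is precisely the argument behind the cited Proposition 3.1 of \cite{Su1}. The paper itself only cites the result without reproving it, so there is nothing further to compare.
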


\begin{thm}\label{metathm} \cite[Theorem 3.2]{Su1} Let $F=\sum_{n\geq 1}  f_n$ where $f_n$ is of the form (\ref{definef_n}), $H(v)=\sum_{n\geq 0} v^n h_n$ and 
$E(v)=\sum_{n\geq 0}  v^n e_n.$  
We have the following plethystic generating functions:

\noindent
 (Symmetric powers) 
\begin{equation}\label{metaSym}H(v)[F] = \sum_{\lambda\in Par} v^{\ell(\lambda)}  H_\lambda[F]=\prod_{m\geq 1} (1-p_m)^ {-f_m(v)}.\end{equation}

(Exterior powers) 
\begin{equation}\label{metaExt}E(v)[F]=\sum_{\lambda\in Par} v^{\ell(\lambda)} E_\lambda[F] =\prod_{m\geq 1} (1- p_m)^{f_m(-v)}.\end{equation}

 (Alternating exterior powers)
\begin{center}$\sum_{\lambda\in Par}  (-1)^{|\lambda|-\ell(\lambda)}v^{\ell(\lambda)} \omega(E_\lambda[F])$\end{center}
\begin{equation}\label{metaAltExt}=\sum_{\lambda \in Par} v^{\ell(\lambda)}H_\lambda[\omega(F)^{alt}]
=H(v)[\omega(F)^{alt}]
= \prod_{m\geq 1} (1+ p_m)^{f_m(v)}.\end{equation}

(Alternating symmetric powers) 
\begin{center}$\sum_{\lambda\in Par} (-1)^{|\lambda|-\ell(\lambda)}v^{\ell(\lambda)} \omega(H_\lambda[F])$\end{center}
\begin{equation}\label{metaAltSym}=\sum_{\lambda \in Par} v^{\ell(\lambda)}E_\lambda[\omega(F)^{alt}]
=E(v)[\omega(F)^{alt}]
= \prod_{m\geq 1} (1+ p_m)^{-f_m(-v)}.\end{equation}
\end{thm}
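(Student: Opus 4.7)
The key ingredient is Proposition~\ref{Su1Prop3.1}, which gives the closed-form expressions $F=-\sum_{d\ge 1}\tfrac{\psi(d)}{d}\log(1-p_d)$ and $\omega(F)^{alt}=\sum_{d\ge 1}\tfrac{\psi(d)}{d}\log(1+p_d)$.  My plan is to reduce all four identities of Theorem~\ref{metathm} to these log formulas by using the descriptions $H(v)=\exp\sum_{k\ge 1}\tfrac{v^k p_k}{k}$ and $E(v)=\exp\sum_{k\ge 1}(-1)^{k-1}\tfrac{v^k p_k}{k}$, together with the fact that plethysm with $p_k$ is the ring endomorphism $p_j\mapsto p_{jk}$ and therefore commutes with $\log$ and $\exp$.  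The combinatorial first equalities $H(v)[F]=\sum_\lambda v^{\ell(\lambda)}H_\lambda[F]$ and $E(v)[F]=\sum_\lambda v^{\ell(\lambda)}E_\lambda[F]$ come for free: since $p_k$ is additive on sums, $H(v)[A+B]=H(v)[A]\cdot H(v)[B]$, so $H(v)[F]=\prod_{i\ge 1}\sum_{m\ge 0}v^m h_m[f_i]$, and regrouping the product by partitions $\lambda$ with $m_i(\lambda)$ parts equal to $i$ gives the stated sum (the same argument works for $E(v)$).

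For the product formula I would compute
\begin{align*}
H(v)[F]
&=\exp\sum_{i\ge 1}\tfrac{v^i}{i}\Bigl(-\sum_{d\ge 1}\tfrac{\psi(d)}{d}\log(1-p_{id})\Bigr)\\
&=\exp\sum_{m\ge 1}\log\tfrac{1}{1-p_m}\cdot\tfrac{1}{m}\sum_{d\mid m}\psi(d)v^{m/d}
=\prod_{m\ge 1}(1-p_m)^{-f_m(v)},
\end{align*}
where in the reordering I set $m=id$ and invoked the definition~(\ref{definepolyf_n}) of $f_m(v)$.  Replacing $H(v)$ by $E(v)$ merely inserts a sign $(-1)^{i-1}$ in the outer sum, which after the same reordering bundles into $f_m(-v)$ with a flipped outer sign, producing $\prod_m(1-p_m)^{f_m(-v)}$ and hence (\ref{metaExt}).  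Plugging $\omega(F)^{alt}$ into the same computation replaces each $\log\tfrac{1}{1-p_{id}}$ by $\log(1+p_{id})$, immediately giving the product forms $\prod_m(1+p_m)^{f_m(v)}$ and $\prod_m(1+p_m)^{-f_m(-v)}$ that appear in (\ref{metaAltExt}) and (\ref{metaAltSym}).

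What remains is to identify $\sum_\lambda(-1)^{|\lambda|-\ell(\lambda)}v^{\ell(\lambda)}\omega(E_\lambda[F])$ with $\sum_\lambda v^{\ell(\lambda)}H_\lambda[\omega(F)^{alt}]$, and likewise for the symmetric-alternating case; this is the main technical obstacle.  I would handle it by proving the parity identities $\omega(e_n[f])=h_n[\omega(f)]$ when $\deg f$ is odd and $\omega(e_n[f])=e_n[\omega(f)]$ when $\deg f$ is even, together with $h_n[-g]=(-1)^n e_n[g]$; both are verified directly on the power-sum basis using $\omega(p_d)=(-1)^{d-1}p_d$ and the formula $p_k[f]\big|_{p_i\mapsto p_{ki}}$.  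Writing $(-1)^{|\lambda|-\ell(\lambda)}=\prod_i(-1)^{(i-1)m_i(\lambda)}$ shows that the sign contributes $(-1)^{m_i}$ precisely at even $i$, which is exactly what is needed to turn $e_{m_i}[\omega(f_i)]$ into $h_{m_i}[-\omega(f_i)]=h_{m_i}[(-1)^{i-1}\omega(f_i)]$ at even parts, while at odd parts the factor $h_{m_i}[\omega(f_i)]=h_{m_i}[(-1)^{i-1}\omega(f_i)]$ already agrees.  Taking the product over $i$ then assembles the desired $H_\lambda[\omega(F)^{alt}]$, completing the argument.
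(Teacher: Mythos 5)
Since Theorem~\ref{metathm} is cited from \cite{Su1} and not reproved in this paper, there is no in-text argument to compare against. Your proof is correct and complete: starting from the logarithmic form of $F$ and $\omega(F)^{alt}$ in Proposition~\ref{Su1Prop3.1}, the product formulas drop out of $H(v)=\exp\sum_{i\ge1}\frac{v^ip_i}{i}$ (and likewise for $E(v)$) together with the fact that $p_i[\cdot]$ is the ring endomorphism $p_j\mapsto p_{ij}$ and hence commutes with $\log$ and $\exp$, the reindexing $m=id$ producing exactly $f_m(\pm v)$; and the remaining identifications of the signed sums with $H_\lambda[\omega(F)^{alt}]$ and $E_\lambda[\omega(F)^{alt}]$ follow correctly from the $\omega$--plethysm parity rule $\omega(f[g])=(\omega f)[\omega g]$ for $\deg g$ odd and $\omega(f[g])=f[\omega g]$ for $\deg g$ even, combined with $h_n[-g]=(-1)^n e_n[g]$ (and its $e\leftrightarrow h$ twin), the factor $(-1)^{|\lambda|-\ell(\lambda)}=\prod_i(-1)^{(i-1)m_i(\lambda)}$ contributing the needed $(-1)^{m_i}$ exactly at even parts. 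Given that the present paper imports Proposition~\ref{Su1Prop3.1} as the stated ingredient, your route is almost certainly the same one taken in \cite{Su1}.
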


\begin{prop}\label{metaf} \cite[Lemma 3.3]{Su1} The numbers $f_n(1)$ and $f_n(-1)$ determine each other according to the equations 
$f_{2m+1}(-1)=-f_{2m+1}(1)$ for all $m\geq 0,$ and 
$f_{2m}(-1)=f_m(1) -f_{2m}(1)$ for all $m\geq 1.$  In fact, 
the symmetric functions 
$f_n = \dfrac{1}{n} \sum_{d|n} \psi(d) p_d^{\frac{n}{d}}$
are  determined by the numbers 
 $f_n(1)=\dfrac{1}{n} \sum_{d|n}\psi(d) ,$
or by the numbers $f_n(-1)=\dfrac{1}{n} \sum_{d|n}\psi(d) (-1)^{\frac{n}{d}}.$
\end{prop}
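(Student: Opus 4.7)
The plan is to prove the two identities by direct computation from the definition $f_n(t)=\frac{1}{n}\sum_{d\mid n}\psi(d)t^{n/d}$, and then handle the determination statement separately using Möbius inversion combined with an induction on the $2$-adic valuation.

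For the first identity, I would observe that when $n=2m+1$ is odd, every divisor $d\mid n$ is odd, hence $n/d$ is odd as well, so $(-1)^{n/d}=-1$ for every term. This immediately gives
\[
f_{2m+1}(-1)=\tfrac{1}{n}\sum_{d\mid n}\psi(d)(-1)=-f_{2m+1}(1).
\]
For the second identity, the key observation is that for $d\mid 2m$, the quotient $2m/d$ is even precisely when $d\mid m$, and odd precisely when $d\mid 2m$ but $d\nmid m$. Splitting the sum accordingly,
\[
2m\,f_{2m}(-1)=\sum_{d\mid m}\psi(d)-\sum_{d\mid 2m,\,d\nmid m}\psi(d),
\]
while on the other side,
\[
2m\bigl(f_m(1)-f_{2m}(1)\bigr)=2\sum_{d\mid m}\psi(d)-\sum_{d\mid 2m}\psi(d)=\sum_{d\mid m}\psi(d)-\sum_{d\mid 2m,\,d\nmid m}\psi(d).
\]
Comparing the two expressions proves the identity.

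For the determination statement from $\{f_n(1)\}_{n\ge 1}$, I would note that $n\,f_n(1)=\sum_{d\mid n}\psi(d)$ is a standard divisor-sum relation; Möbius inversion yields $\psi(n)=\sum_{d\mid n}\mu(n/d)\,d\,f_d(1)$, which determines $\psi$ and therefore each $f_n$. For the determination from $\{f_n(-1)\}_{n\ge 1}$, I would use the two identities just proven to recover the $f_n(1)$ inductively: for odd $n$ the first identity gives $f_n(1)=-f_n(-1)$ directly, and for $n=2^a b$ with $b$ odd and $a\ge 1$, the second identity gives $f_n(1)=f_{n/2}(1)-f_n(-1)$, so iterating,
\[
f_{2^a b}(1)=-f_b(-1)-\sum_{k=1}^{a}f_{2^k b}(-1),
\]
an induction on $a$ (the $2$-adic valuation of $n$) that expresses each $f_n(1)$ in terms of the values $f_m(-1)$ for $m\mid n$.

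There is no real obstacle here; the only care required is in the bookkeeping for the divisor split in the second identity (the equivalence ``$2m/d$ even $\iff d\mid m$'' for $d\mid 2m$), and in organizing the induction so that the chain of values $f_b(-1), f_{2b}(-1),\ldots, f_{2^a b}(-1)$ indeed reconstructs $f_n(1)$, at which point the Möbius-inversion step completes the recovery of $\psi$ and hence of $f_n$.
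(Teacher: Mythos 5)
Your proof is correct. The paper states Proposition \ref{metaf} by citing \cite[Lemma 3.3]{Su1} and does not reproduce an argument, so there is no internal proof to compare against, but your direct computation is sound and is the natural self-contained approach: the parity split ``$2m/d$ even $\iff d\mid m$'' for $d\mid 2m$ is exactly the bookkeeping needed for the even case, the odd case is immediate, and M\"obius inversion on $n\,f_n(1)=\sum_{d\mid n}\psi(d)$ together with the induction on the $2$-adic valuation correctly recovers $\psi$ (and hence the $f_n$) from either sequence of values.
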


Recall from Section 1.1 that we define  $H^{\pm}=\sum_{r\geq 0} (-1)^r h_r$ and $E^{\pm}=\sum_{r\geq 0} (-1)^r e_r.$ Thus 
$H^{\pm}=1-H^{alt},$ where $H^{alt}=\sum_{r\geq 1} (-1)^{r-1} h_r,$ and likewise $E^{\pm}=1-E^{alt}.$  The following identity is well known (see \cite[(2.6)]{M}, \cite[Section 7.6]{St4EC2}).
\begin{equation}\label{HE}\left(\sum_{n\geq 0} t^n h_n\right) \left( \sum_{n\geq 0} (-t)^n e_n\right) =1. \text{ Equivalently, } H^{\pm}\cdot E=1=H\cdot E^{\pm}.\end{equation}
This identity is generalised in Lemma \ref{pm} below.

\begin{lem}\label{pm} Let $F=\sum_{n\geq 1} f_n,$  $G=1+\sum_{n\geq 1} g_n$ and $K=1+\sum_{n\geq 1} k_n$ be arbitrary formal series of symmetric functions, as usual with $f_n, g_n, k_n$ being of homogeneous degree $n.$
\begin{enumerate}
\item 
$H[F]=G\iff E^{\pm}[F]=\dfrac{1}{G} \iff \sum_{r\geq 1} (-1)^{r-1} e_r[F] =\dfrac{G-1}{G}.$
\item $E[F]=K\iff H^{\pm}[F]=\dfrac{1}{K} \iff \sum_{r\geq 1} (-1)^{r-1} h_r[F] =\dfrac{K-1}{K}.$
\end{enumerate}
\end{lem}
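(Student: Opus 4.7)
The plan is to deduce both parts directly from the classical identity $H\cdot E^{\pm}=1=H^{\pm}\cdot E$ recorded in \eqref{HE}, combined with the fact that for a fixed symmetric function $F$ the assignment $q\mapsto q[F]$ is a ring homomorphism on the ring of symmetric functions (additive and multiplicative in the first slot). Since $F=\sum_{n\geq 1}f_n$ has no constant term, each of $H[F]$, $E[F]$, $H^{\pm}[F]$, $E^{\pm}[F]$ has constant term $1$ and is therefore invertible in the completed ring of symmetric functions; the same holds for $G$ and $K$.

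For part (1), I would apply the homomorphism $q\mapsto q[F]$ to $H\cdot E^{\pm}=1$ to obtain $H[F]\cdot E^{\pm}[F]=1[F]=1$, so that $E^{\pm}[F]=(H[F])^{-1}$. The first equivalence $H[F]=G\iff E^{\pm}[F]=1/G$ is then immediate, since $G$ is invertible. For the second equivalence I would expand $E^{\pm}=1-\sum_{r\geq 1}(-1)^{r-1}e_r$ and use additivity of plethysm in the first argument to get $E^{\pm}[F]=1-\sum_{r\geq 1}(-1)^{r-1}e_r[F]$; equating this to $1/G$ and rearranging yields $\sum_{r\geq 1}(-1)^{r-1}e_r[F]=1-1/G=(G-1)/G$.

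Part (2) is a verbatim repetition of the same argument starting from $H^{\pm}\cdot E=1$, with the roles of $H$ and $E$ (and correspondingly $h_r$ and $e_r$) interchanged. There is essentially no obstacle; the only point requiring care is the sign bookkeeping between $E^{\pm}$ (respectively $H^{\pm}$) and its truncated alternating form $\sum_{r\geq 1}(-1)^{r-1}e_r$ (respectively $\sum_{r\geq 1}(-1)^{r-1}h_r$), which is a purely formal manipulation.
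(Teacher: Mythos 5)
Your proof is correct and takes essentially the same route as the paper: both arguments apply the plethysm homomorphism $q\mapsto q[F]$ to the classical identity $H\cdot E^{\pm}=1$ (resp.\ $H^{\pm}\cdot E=1$) and then do the sign bookkeeping to pass between $E^{\pm}$ and $\sum_{r\geq 1}(-1)^{r-1}e_r$. The paper is merely terser, writing $E^{\pm}=1/H$ and then substituting $F$, while you spell out the ring-homomorphism property and the invertibility of the constant-term-1 series explicitly.
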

\begin{proof}\begin{enumerate}
\item By definition,  $E^{\pm}=\sum_{r\geq 0} (-1)^r e_r=1/H,$ 
and hence the first equality follows. For the second equality, note that $\sum_{r\geq 1} (-1)^{r-1} e_r= 1-E^{\pm}=1 -1/H,$ and hence 
$\sum_{r\geq 1} (-1)^{r-1} e_r[F]= 1 -1/H[F]$ as claimed. The reverse direction is clear.
\item This follows exactly as above, since $H^{\pm}=\sum_{r\geq 0} (-1)^r h_r=1/E.$
\end{enumerate}
\end{proof}

\begin{lem}\label{pmalt} Let $G=\sum_{n\geq 1} g_n,$ $K=\sum_{n\geq 0} k_n,$ where $g_n, k_n$ are symmetric functions of homogeneous degree $n$ for $n\geq 1,$ and $k_0=1.$   Let $K^\pm$ denote the sum 
$\sum_{n\geq 0} (-1)^n k_n.$ Then 
\begin{align} 
H[\sum_{n\geq 1} (-1)^{n-1}\omega(g_n)]=K&\iff 
H[\sum_{n\geq 1} g_n] = \dfrac{1}{K[-p_1]}=\dfrac{1}{\omega(K)^\pm}\\
&\iff E^{\pm}[\sum_{n\geq 1} g_n] = \omega(K)^\pm.
\end{align}
\begin{align} 
E[\sum_{n\geq 1} (-1)^{n-1}\omega(g_n)]=K &\iff 
E[\sum_{n\geq 1} g_n] =  \dfrac{1}{K[-p_1]}=\dfrac{1}{\omega(K)^\pm}\\
&\iff H^{\pm}[\sum_{n\geq 1} g_n] = \omega(K)^\pm.
\end{align}

\end{lem}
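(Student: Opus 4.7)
The plan is to reduce all four equivalences to two algebraic facts that are essentially independent of $G$ and $K$: first, the plethystic involution identity $f[-p_1] = (-1)^{\deg f}\,\omega(f)$ for homogeneous $f$, which immediately gives the equality $K[-p_1]=\omega(K)^{\pm}$; second, the reciprocal relations $H\cdot E^{\pm}=1=E\cdot H^{\pm}$ from (\ref{HE}), which give the bridges $H[g]\cdot E^{\pm}[g]=1$ and $E[g]\cdot H^{\pm}[g]=1$ once plethystically substituted.

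First, writing a homogeneous $f$ of degree $n$ as $\sum_{\lambda\vdash n} a_\lambda p_\lambda$ and using $p_k[-p_1]=-p_k$, I get $f[-p_1]=\sum_\lambda a_\lambda (-1)^{\ell(\lambda)} p_\lambda=(-1)^n\omega(f)$ since $\omega(p_\lambda)=(-1)^{n-\ell(\lambda)}p_\lambda$. Applied termwise to the graded series $K=\sum_{n\ge 0} k_n$ this yields $K[-p_1]=\sum_{n\ge 0}(-1)^n\omega(k_n)=\omega(K)^{\pm}$, which is the second equality in each displayed line. In addition $(-p_1)[-p_1]=p_1$, so the map $f\mapsto f[-p_1]$ is an involution on $\Lambda$. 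A direct consequence is that $\sum_{n\ge 1}(-1)^{n-1}\omega(g_n)=-G[-p_1]$.

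Next, I establish the auxiliary plethystic identities $H[-g]=E^{\pm}[g]$ and $E[-g]=H^{\pm}[g]$. One clean derivation uses the exponential formulas in (\ref{defHE}): since $p_k[-g]=-p_k[g]$, one has
\[
H(t)[-g]=\exp\sum_k\tfrac{t^k(-p_k[g])}{k}=\exp\sum_k\tfrac{(-1)^{k-1}(-t)^k p_k[g]}{k}=E(-t)[g],
\]
so $h_n[-g]=(-1)^n e_n[g]$ and summing gives $H[-g]=E^{\pm}[g]$; the companion identity is obtained by swapping $H$ and $E$. Now combining the two steps,
\[
H\big[\textstyle\sum_{n\ge 1}(-1)^{n-1}\omega(g_n)\big]=H[-G[-p_1]]=E^{\pm}[G[-p_1]]=E^{\pm}[G]\,[-p_1],
\]
where the last equality is the associativity $f[g][h]=f[g[h]]$ of plethysm. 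Setting this equal to $K$ and applying the involution $[-p_1]$ to both sides yields $E^{\pm}[G]=K[-p_1]=\omega(K)^{\pm}$, and multiplying by $H[G]$ and using $H\cdot E^{\pm}=1$ gives the remaining form $H[G]=1/K[-p_1]$. The second block of equivalences follows by the same argument with the roles of $H,E$ and $H^{\pm},E^{\pm}$ interchanged.

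I expect the main obstacle to be purely bookkeeping: keeping the sign conventions straight for plethystic substitution by the ``virtual alphabet'' $-p_1$ (under the convention used in the paper, $p_k[c\cdot f]=c\cdot p_k[f]$ so $p_k[-p_1]=-p_k$, not $(-1)^k p_k$) and being explicit that plethystic associativity applies when the inner argument has negative coefficients. Once those conventions are fixed, every step is a one-line manipulation; no genuinely difficult identity is needed beyond the two ingredients isolated in the first two paragraphs.
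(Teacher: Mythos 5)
Your proof is correct and follows essentially the same route as the paper's: both use the identity $f[-p_1]=(-1)^{\deg f}\omega(f)$ to get $K[-p_1]=\omega(K)^{\pm}$ and to rewrite $\sum_{n\geq 1}(-1)^{n-1}\omega(g_n)=-G[-p_1]$, then invoke associativity of plethysm together with $H\cdot E^{\pm}=1$ to pass between the three equivalent forms. The only difference is cosmetic — you apply the $H[-\cdot]=E^{\pm}[\cdot]$ identity before commuting past $[-p_1]$ while the paper commutes first — and you spell out the exponential derivation of $H[-g]=E^{\pm}[g]$ that the paper leaves implicit.
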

\begin{proof} We use the fact that for any symmetric functions $f_1, f_2$ of homogeneous degree, $f_1[-f_2]=(-1)^{\mathrm{deg} f_1} \omega(f_1)[f_2].$ In particular this implies $K[-p_1]=\omega(K)^{\pm}$ whenever $K$ is a series of symmetric functions $k_n$ of homogeneous degree $n.$
Hence, using associativity of plethysm,
$$K=H[-\sum_{n\geq 1} (-1)^n  \omega(g_n)]=H[-G[-p_1]]
=(H[-G])[-p_1],$$
$$\text{or equivalently }\quad K[-p_1]=E^{\pm}[G]=(\frac{1}{H})[G]=\dfrac{1}{H[G]},$$
$$\text{ and finally }\quad H[G]=\dfrac{1}{K[-p_1]}=\dfrac{1}{\omega(K)^{\pm}}.$$
The equivalence of the first two equations  is a consequence of the fact that $H[G]=(\dfrac{1}{E^{\pm}})[G] =\dfrac{1}{E^{\pm}[G]}.$
The   equivalences of the second pair follow in a similar manner. 
\end{proof}

Lemma (\ref{pmalt}) explains, in greater generality,  the connection between equations (\ref{metaSym}) and (\ref{metaAltExt}) (resp. (\ref{metaExt}) and (\ref{metaAltSym})).  In fact these lemmas give us  the following observation. Let $F,$ $H,$ and $E$ be as defined in Theorem \ref{metathm}.  Then

\begin{prop}\label{metaequiv}  The identities of Theorem \ref{metathm} are all equivalent, and are also equivalent to 
\begin{equation}E^{\pm}(v)[F] 
=\prod_{m\geq 1} (1- p_m)^ {f_m(v)}.\end{equation}

\begin{equation}H^{\pm}(v)[F]=\prod_{m\geq 1} (1- p_m)^{-f_m(-v)}.\end{equation}
\end{prop}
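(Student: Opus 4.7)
The plan is to show that the four identities of Theorem~\ref{metathm} together with the two new ones form a fully connected web of equivalences. Three natural moves---plethystic reciprocation via Lemma~\ref{pm}, the substitution $v\mapsto -v$, and the $\omega$-bridge of Lemma~\ref{pmalt}---suffice to link any two of the six identities. All three moves extend verbatim from $H,E$ to the $v$-weighted generating functions $H(v),E(v)$, treating $v$ as a scalar.

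First, from the classical identity~\eqref{HE}, we have $H(v)\cdot E^\pm(v)=1=E(v)\cdot H^\pm(v)$, and plethystic substitution by $F$ gives $E^\pm(v)[F]=1/H(v)[F]$ and $H^\pm(v)[F]=1/E(v)[F]$; inverting both sides of~\eqref{metaSym} (resp.~\eqref{metaExt}) produces the first (resp.\ second) new identity. Next, the substitution $v\mapsto -v$ exchanges $H(v)\leftrightarrow H^\pm(v)$ and $E(v)\leftrightarrow E^\pm(v)$ directly from~\eqref{defHE}; applied to~\eqref{metaSym} it produces $H^\pm(v)[F]=\prod_m(1-p_m)^{-f_m(-v)}$, which is exactly the second new identity, and hence~\eqref{metaSym}$\iff$\eqref{metaExt}.

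Finally, Lemma~\ref{pmalt} bridges~\eqref{metaSym} with~\eqref{metaAltExt}: taking $G=F$ and $K$ equal to the right-hand side of~\eqref{metaAltExt}, the lemma asserts that $H(v)[\omega(F)^{alt}]=K$ is equivalent to $H(v)[F]=1/K[-p_1]$. Since plethysm with $-p_1$ acts on power sums by $p_m\mapsto -p_m$, leaving the scalar exponents $f_m(v)$ untouched, $K[-p_1]=\prod_m(1-p_m)^{f_m(v)}$, whose reciprocal matches the right-hand side of~\eqref{metaSym}. The same argument with $E$ replacing $H$ gives~\eqref{metaExt}$\iff$\eqref{metaAltSym}, closing the web. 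The only technical point requiring care is the action of $K\mapsto K[-p_1]$ on the infinite product, which follows from the exponential form $H(v)=\exp\sum_k v^k p_k/k$ together with $p_k[-p_1]=-p_k$; no substantive obstacle arises.
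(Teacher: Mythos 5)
Your proof is correct and follows the approach the paper gestures at: the paper offers no explicit proof for Proposition~\ref{metaequiv}, merely remarking just before it that Lemmas~\ref{pm} and~\ref{pmalt} ``give us the following observation,'' and your argument fills in the details of that remark, using reciprocation via~\eqref{HE}/Lemma~\ref{pm} to get the two new identities, Lemma~\ref{pmalt} to bridge~\eqref{metaSym} with~\eqref{metaAltExt} and~\eqref{metaExt} with~\eqref{metaAltSym}, and the substitution $v\mapsto -v$ (which you correctly observe sends $H(v)\mapsto H^{\pm}(v)$, $E(v)\mapsto E^{\pm}(v)$, and $f_m(v)\mapsto f_m(-v)$) to close the web between the $H$-side and $E$-side families.
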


Now let $F=\sum_{n\geq 1} f_n$ be an arbitrary series of symmetric functions $f_n$ homogeneous of degree $n.$   In particular $f_n$ need not be of the form (\ref{definef_n}). We write $F_{\geq 2}$ for the series $\sum_{n\geq 2} f_n.$  

\begin{thm}\label{metage2}\cite[Proposition 2.3, Corollary 2.4]{Su1} Assume that $F=\sum_{n\geq 1} f_n$ is any series of symmetric functions $f_n$ homogeneous of degree $n.$ Also assume $f_1=p_1.$  Then we have the following  identities:
 \begin{equation}\label{metage2a}
H(v)[F_{\geq 2}]
=E(-v) \cdot H(v)[F].
\end{equation}
Equivalently,
\begin{equation}\label{metage2b}
E^{\pm}(v)[F_{\geq 2}]=E^{\pm}(v)[\sum_{n\geq 2} f_n] 
=\dfrac{H(v)}{ H(v)[F]}.
\end{equation}
\begin{equation}\label{metage2c}
E(v)[F_{\geq 2}]
=H(-v) \cdot E(v)[F].
\end{equation}
Equivalently,
\begin{equation}\label{metage2d}
H^{\pm}(v)[F_{\geq 2}]=H^{\pm}(v)[\sum_{n\geq 2} f_n] 
=\dfrac{E(v)}{ E(v)[F]}.
\end{equation}

\end{thm}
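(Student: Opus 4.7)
The plan is to exploit the hypothesis $f_1 = p_1$, which lets me split $F = p_1 + F_{\geq 2}$, together with the fact that the generating functions $H(v)$ and $E(v)$ behave like plethystic exponentials. For any symmetric functions $a, b$, one has the multiplicative property
\begin{equation*}
H(v)[a+b] = H(v)[a] \cdot H(v)[b], \qquad E(v)[a+b] = E(v)[a] \cdot E(v)[b],
\end{equation*}
which is immediate from writing $H(v) = \exp\bigl(\sum_{i\geq 1} v^i p_i/i\bigr)$ (and similarly for $E(v)$ via \eqref{defHE}) and recalling that power sums are additive in the first slot of plethysm, $p_i[a+b] = p_i[a] + p_i[b]$.

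First I will record the trivial base computations $H(v)[p_1] = \sum_{n \geq 0} v^n h_n[p_1] = H(v)$ and $E(v)[p_1] = E(v)$. Applying multiplicativity to the decomposition $F = p_1 + F_{\geq 2}$ then yields
\begin{equation*}
H(v)[F] = H(v) \cdot H(v)[F_{\geq 2}], \qquad E(v)[F] = E(v) \cdot E(v)[F_{\geq 2}].
\end{equation*}
Solving for the plethysms on $F_{\geq 2}$ and invoking the classical duality $H(v) \cdot E(-v) = 1 = E(v) \cdot H(-v)$ from equation \eqref{HE} immediately produces \eqref{metage2a} and \eqref{metage2c}. The equivalent alternating forms \eqref{metage2b} and \eqref{metage2d} then follow by a direct appeal to Lemma \ref{pm}, applied to the series $F_{\geq 2}$ with $G = H(v)[F]/H(v)$ in one case and $K = E(v)[F]/E(v)$ in the other; the lemma converts each symmetric-power identity into its $E^{\pm}$ or $H^{\pm}$ counterpart by taking reciprocals.

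I do not anticipate a genuine obstacle; the argument is essentially a one-line consequence of the two ingredients above. The only point that demands slight care is consistency of the grading variable, namely that $E^{\pm}(v)$ and $H^{\pm}(v)$ be interpreted as $E(-v)$ and $H(-v)$ so that Lemma \ref{pm} applies with the variable $v$ tracked throughout. Once that bookkeeping is fixed, each of the four equations in the theorem follows by a short rearrangement, and the equivalences among them are automatic.
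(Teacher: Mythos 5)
Your proof is correct and follows essentially the same route as the paper: split $F=p_1+F_{\geq 2}$, use the multiplicativity of $H(v)[\cdot]$ and $E(v)[\cdot]$ over plethystic sums together with $H(v)[p_1]=H(v)$, $E(v)[p_1]=E(v)$, and finish with the duality $H(v)E(-v)=1$. The only cosmetic difference is that you cite Lemma~\ref{pm} for the passage to \eqref{metage2b} and \eqref{metage2d}, while the paper invokes the underlying identity \eqref{HE} directly.
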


\begin{proof} The equivalence of (\ref{metage2a}) and (\ref{metage2b}) follows because of the identity (\ref{HE}).  Consider now equation (\ref{metage2a}). By standard properties of the skewing operation and the plethysm operation (see, e.g. 
\cite[(8.8)]{M}), we know that 
$h_n[G_1+G_2]=\sum_{k=0}^n h_k[G_1] h_{n-k}[G_2].$
This in turn gives $$H[G_1+G_2]= H[G_1] \, H[G_2].$$  
Taking $G_1=f_1$ and $G_2=F- f_1,$ we have
$$ H[F] =H[f_1]\, H[\sum_{n\geq 2} f_n ].$$
But $H[ f_1]=H[ p_1]=\sum_{n\geq 0} h_n.$   
Hence, using (\ref{HE}), $$\dfrac{1}{H(v)[ f_1]}= \sum_{n\geq 0} (-v)^n e_n.$$
The equations (\ref{metage2c}) and (\ref{metage2d}) are obtained in  entirely analogous fashion.
\end{proof}

An important consequence of Theorem \ref{metage2} is worth pointing out.  Denote by $Lie_n$ the Frobenius characteristic of the $S_n$-representation afforded by the multilinear component of the free Lie algebra on $n$ generators.  Let $Lie=\sum_{n\geq 1} Lie_n.$  
This  special case of equation (\ref{metaSym}),  Theorem \ref{metathm}, obtained by taking $\psi(d)=\mu(d)$ in \eqref{definef_n}, and hence $f_n=Lie_n,$ yields
\begin{equation*} H[Lie]=(1-p_1)^{-1} \end{equation*}
This is Thrall's theorem.  See 
Theorem \ref{ThrallPBWCadoganSolomon} and more generally\cite{R}. 
Define a symmetric function $\kappa=\sum_{n\geq 2} s_{(n-1,1)},$ where $s_{(n-1,1)}$ is the Schur function indexed by the partition $(n-1,1).$ (This is the Frobenius characteristic of the standard  representation of $S_n.$) 

Lemma \ref{pm} and Theorem \ref{metage2} now imply that  Thrall's theorem is in fact {\it equivalent} to the derived series decomposition of the free Lie algebra \cite{R}.  More precisely, the following identities are equivalent:
\begin{thm}\label{EquivPBW}  (Equivalence of Thrall's theorem and derived series for free Lie algebra)
\begin{equation}\label{EquivPBWSym}  (H-1)[Lie]=(\sum_{r\geq 1} h_r)[Lie]=\sum_{n\geq 1} p_1^n. \end{equation}
\begin{equation}\label{PlInv-HCadogan}  (H-1)[\sum_{i\geq 1} (-1)^{i-1}\omega(Lie_i)]=p_1,
\end{equation}
(the plethystic inverse of the sum of homogeneous symmetric functions $\sum_{r\geq 1} h_r$).
\begin{equation}\label{EquivPBWAltExt}(1-E^{\pm})[Lie]= (\sum_{r\geq 1} (-1)^{r-1} e_r)[Lie]=p_1,\end{equation}
(the plethystic inverse of the sum $\sum_{r\geq 1} Lie_r$).
\begin{equation}\label{EquivPBWAltExtge2a}(1-E^{\pm})[Lie_{\geq 2}] =(\sum_{r\geq 1} (-1)^{r-1} e_r)[Lie_{\geq 2}]=\kappa. \end{equation}
\begin{equation}\label{EquivPBWAltExtge2b}  \text{ The degree $n$ term in }\sum_{r\geq 0} (-1)^{n-r} e_{n-r}[Lie_{\geq 2}] \text{ is }(-1)^{n-1}  s_{(n-1,1)},
\end{equation}
\begin{equation}\label{LieFilta}   Lie_{\geq 2}=Lie[\kappa],\end{equation}
\begin{equation} \label{LieFiltb}
 Lie_{\geq 2}=\kappa+\kappa[\kappa]+\kappa[\kappa[\kappa]]+\ldots  
  \end{equation}
  (The derived series filtration of the free Lie algebra)
 \begin{equation}\label{LieHodge} (H-1)[Lie_{\geq 2}]=(\sum_{r\geq 1} h_r)[Lie_{\geq 2}]=(1-p_1)^{-1}\cdot E^{\pm}-1
 =\sum_{n\geq 2}\sum_{k=0}^n (-1)^k p_1^{n-k}e_k. \end{equation}
\end{thm}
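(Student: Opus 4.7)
The plan is to derive all eight identities from Thrall's theorem~(\ref{EquivPBWSym}), which in compact form reads $H[Lie]=(1-p_1)^{-1}$ and is taken as given (Theorem~\ref{ThrallPBWCadoganSolomon}). The tools are the meta-identities of Section~\ref{SecMetaThms}, in particular Lemma~\ref{pm}, Lemma~\ref{pmalt}, and Theorem~\ref{metage2}, each a biconditional so that every derivation below yields a chain of equivalences.

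First I handle the non-restricted reformulations. Applying Lemma~\ref{pm}(1) with $F=Lie$ and $G=(1-p_1)^{-1}$ gives $E^{\pm}[Lie]=1-p_1$, which is (\ref{EquivPBWAltExt}) after rearrangement. Applying Lemma~\ref{pmalt} with $g_n=Lie_n$ and $K=1+p_1$ (so that $K[-p_1]=1-p_1=1/H[Lie]$) yields $H\bigl[\sum_{i\geq 1}(-1)^{i-1}\omega(Lie_i)\bigr]=1+p_1$, which is Cadogan's identity~(\ref{PlInv-HCadogan}).

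Next I pass to $Lie_{\geq 2}$ via Theorem~\ref{metage2}, which applies because $Lie_1=p_1$. Equation~(\ref{metage2b}) gives
$$E^{\pm}[Lie_{\geq 2}]=\frac{H}{H[Lie]}=H(1-p_1).$$
A short Pieri telescope, using $p_1 h_n=h_{n+1}+s_{(n,1)}$ for $n\geq 1$ together with $p_1 h_0=h_1$, shows $H(1-p_1)=1-\sum_{n\geq 1}s_{(n,1)}=1-\kappa$; this is (\ref{EquivPBWAltExtge2a}), and (\ref{EquivPBWAltExtge2b}) is the same statement read degree by degree, using that the degree-$n$ part of $\kappa$ is $s_{(n-1,1)}$ for $n\geq 2$. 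Equation~(\ref{metage2a}) analogously yields $H[Lie_{\geq 2}]=H[Lie]/H=(1-p_1)^{-1}E^{\pm}$ (via $HE^{\pm}=1$), which rearranges to (\ref{LieHodge}) once the product is expanded as a double sum in $p_1^{n-k}$ and $e_k$.

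Finally, the filtration identities come from plethystic composition. Identity~(\ref{EquivPBWAltExt}) exhibits $1-E^{\pm}$ and $Lie$ as mutual plethystic inverses (both vanishing in degree $0$), so applying $Lie$ on the left of (\ref{EquivPBWAltExtge2a}) yields $Lie_{\geq 2}=Lie[\kappa]$, which is~(\ref{LieFilta}). Splitting $Lie=p_1+Lie_{\geq 2}$ and using linearity of plethysm in its outer slot (with $p_1[\kappa]=\kappa$) rewrites this as the fixed-point equation $Lie_{\geq 2}=\kappa+Lie_{\geq 2}[\kappa]$; iterating, and invoking associativity of plethysm, produces the derived-series sum~(\ref{LieFiltb}), with convergence in each fixed total degree automatic because $\kappa$ starts in degree~$2$. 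The main obstacle is bookkeeping: each step is a two-line rewriting, but care is needed with the Pieri telescope, the double-sum expansion yielding (\ref{LieHodge}), and verifying that the derived-series iteration converges degree by degree.
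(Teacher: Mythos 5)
Your proof is correct and follows essentially the same route as the paper: start from Thrall's $H[Lie]=(1-p_1)^{-1}$, pass to $E^{\pm}[Lie]=1-p_1$ via Lemma~\ref{pm}, obtain $E^{\pm}[Lie_{\geq 2}]=H\cdot(1-p_1)=1-\kappa$ via (\ref{metage2b}) and a Pieri telescope, apply the plethystic inverse $Lie$ to reach $Lie_{\geq 2}=Lie[\kappa]$ and iterate, and use (\ref{metage2a}) with (\ref{HE}) for (\ref{LieHodge}). The only visible difference is that you explicitly derive Cadogan's identity (\ref{PlInv-HCadogan}) from Lemma~\ref{pmalt} with $K=1+p_1$, a step the paper's proof leaves implicit, and you phrase the telescope via $p_1h_n=h_{n+1}+s_{(n,1)}$ rather than the paper's $h_n-h_{n-1}p_1=-s_{(n-1,1)}$; these are cosmetic, not a different argument.
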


\begin{proof} We specialise the preceding identities to $v=1.$  Equation (\ref{EquivPBWSym}) is equivalent to $H[Lie]=(1-p_1)^{-1},$ and hence by Lemma \ref{pm}, we obtain $E^{\pm}[Lie]=1-p_1,$ which is equation (\ref{EquivPBWAltExt}) after removing the constant term and adjusting signs.

Now invoke (\ref{metage2b}) of Theorem \ref{metage2}.  We have 
$$E^{\pm}[Lie_{\geq 2} ]=H\cdot (1-p_1)
=1+ \sum_{n\geq 2} (h_n-h_{n-1} p_1)
= 1-\kappa, $$
and this is precisely equation (\ref{EquivPBWAltExtge2a}), again after cancelling the constant term and adjusting signs.  Since these steps are clearly reversible, we see that (\ref{EquivPBWAltExt}) and (\ref{EquivPBWAltExtge2a}) are in fact equivalent.

The equivalence of (\ref{EquivPBWAltExtge2a}) and (\ref{EquivPBWAltExtge2b})-(\ref{LieFilta}) follows by applying the plethystic inverse of $\sum_{r\geq 1} (-1)^{r-1}e_r, $ which is given by (\ref{EquivPBWAltExt}). 

It is clear by iteration that (\ref{LieFilta}) gives (\ref{LieFiltb}).  In the reverse direction, we can rewrite (\ref{LieFiltb}) as 
\begin{center}
$Lie=p_1+\kappa+\kappa[\kappa]+\kappa[\kappa[\kappa]]+\ldots,$\end{center} 
and hence 
$Lie[\kappa]=Lie-p_1=Lie_{\geq 2},$
which is (\ref{LieFilta}).
Finally the equivalence of (\ref{EquivPBWSym}) and ((\ref{LieHodge})) follows from equation (\ref{metage2a}) of Theorem \ref{metage2} and equation (\ref{HE}).  
\end{proof} 

\begin{thm}\label{Restrictge2}  Let $F,H,E$ be as in Theorem \ref{metathm}, and assume $f_1=p_1.$ 
\begin{enumerate}
\item Let $\prod_{m\geq 2} (1-p_m)^{-f_m(1)}=\sum_{n\geq 0} g_n$ for homogeneous symmetric functions $g_n$ of degree $n,$ $g_0=1.$  (Note that $g_1=0.$)
Also define $\sigma_n=\sum_{i\geq 0} (-1)^i e_{n-i} g_i.$ Then $\sigma_n, n\geq 1,$ is the characteristic of a one-dimensional, possibly  virtual representation, with the property that its restriction to $S_{n-1}$ is $\sigma_{n-1}.$   Let $\alpha_n$ be the degree $n$ term in $H[F_{\geq 2}], n\geq 0.$ (Note that $\alpha_0=1$ and $\alpha_1=0.$ )  Then we have the recurrence
\begin{equation} 
\alpha_n=p_1\alpha_{n-1}+(-1)^n\sigma_n.
\end{equation}
\item  Let $\prod_{m\geq 2} (1-p_m)^{f_m(-1)}=\sum_{n\geq 0} k_n$ for homogeneous symmetric functions $k_n$ of degree $n,$ $k_0=1.$  (Note that $k_1=0.$)
Also define $\tau_n=\sum_{i\geq 0} (-1)^i h_{n-i} k_i.$ Then $\tau_n, n\geq 1, $ is the characteristic of a one-dimensional, possibly  virtual representation, with the property that its restriction to $S_{n-1}$ is $\tau_{n-1}.$ Let $\beta_n$ be the degree $n$ term in $E[F_{\geq 2}], n\geq 0.$ (Note that $\beta_0=1$ and $\beta_1=0.$ )  Then we have the recurrence
\begin{equation} 
\beta_n=p_1\beta_{n-1}+(-1)^n\tau_n.
\end{equation}
\end{enumerate}
\end{thm}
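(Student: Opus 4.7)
The plan is to derive both recurrences from the ``$F_{\geq 2}$'' identities of Theorem~\ref{metage2} by specialising at $v=1$ and combining with the product formulas of Theorem~\ref{metathm}, and then to read off the dimension and restriction claims from the explicit $p_1$-dependence of the auxiliary series $G$ and $K$.

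For Part~(1) I begin from equation~(\ref{metage2a}) at $v=1$, which reads $H[F_{\geq 2}]=E^{\pm}\cdot H[F]$. By equation~(\ref{metaSym}) at $v=1$ we have $H[F]=\prod_{m\geq 1}(1-p_m)^{-f_m(1)}$; the hypothesis $f_1=p_1$ forces $f_1(t)=t$ and so $f_1(1)=1$, which isolates the $m=1$ factor and yields $H[F]=(1-p_1)^{-1}\cdot G$ with $G=\sum_{n\geq 0}g_n$. Multiplying the first identity through by $1-p_1$ reduces it to the clean statement
\[
(1-p_1)\,H[F_{\geq 2}]\;=\;E^{\pm}\cdot G.
\]
Extracting the degree-$N$ component on both sides gives $\alpha_N-p_1\alpha_{N-1}=\sum_{k=0}^{N}(-1)^k e_k\,g_{N-k}$, and the reindexing $k\mapsto N-k$ identifies the right-hand side with $(-1)^N\sigma_N$, which is the desired recurrence.

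The dimension and restriction claims for $\sigma_n$ then follow from the observation that $G=\prod_{m\geq 2}(1-p_m)^{-f_m(1)}$ involves only the power sums $p_m$ with $m\geq 2$. Hence $g_i$ has vanishing $p_1^i$-coefficient (equivalently, dimension zero as a virtual character) for every $i\geq 1$, which gives $\dim(e_{n-i}g_i)=\binom{n}{n-i}\dim(e_{n-i})\dim(g_i)=0$ for $i\geq 1$ and therefore $\dim\sigma_n=\dim e_n=1$. Since the restriction from $S_n$ to $S_{n-1}$ acts on Frobenius characteristics as $p_1^\perp=\partial/\partial p_1$, and $\partial g_i/\partial p_1=0$ while $\partial e_n/\partial p_1=e_{n-1}$ (the classical branching of the sign representation), a termwise differentiation of $\sigma_n=\sum_i(-1)^i e_{n-i}g_i$ immediately yields $\partial\sigma_n/\partial p_1=\sigma_{n-1}$.

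Part~(2) is a mirror argument: equation~(\ref{metage2c}) at $v=1$ gives $E[F_{\geq 2}]=H^{\pm}\cdot E[F]$, while equation~(\ref{metaExt}) together with $f_1(-1)=-1$ furnishes $E[F]=(1-p_1)^{-1}\cdot K$. Multiplying by $1-p_1$ and extracting the degree-$N$ component produces $\beta_N-p_1\beta_{N-1}=\sum_k(-1)^k h_k\,k_{N-k}=(-1)^N\tau_N$, and the dimension/restriction analysis for $\tau_n$ goes through identically once the identity $\partial e_n/\partial p_1=e_{n-1}$ is replaced by its trivial-representation counterpart $\partial h_n/\partial p_1=h_{n-1}$. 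No step is really an obstacle; the only point demanding some care is the formal bookkeeping when detaching the divergent factor $(1-p_1)^{-1}$ inside a plethystically expanded series before taking degree components, but once that is done both recurrences drop out in a single algebraic line.
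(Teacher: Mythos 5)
Your proof is correct and follows essentially the same route as the paper's: specialise the $F_{\geq 2}$ identities of Theorem~\ref{metage2} at $v=1$, combine with the product formulas of Theorem~\ref{metathm} to detach the $(1-p_1)^{-1}$ factor, extract degree-$n$ components to get the recurrence, and read off the dimension and restriction properties of $\sigma_n,\tau_n$ from the absence of $p_1$ in $g_i,k_i$ together with $\partial e_n/\partial p_1=e_{n-1}$ (resp.\ $\partial h_n/\partial p_1=h_{n-1}$). The only presentational difference is that the paper computes the dimension by taking the scalar product with $p_1^n$ whereas you use the binomial-coefficient dimension formula for the product, which is of course equivalent.
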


\begin{proof}  \begin{enumerate} 
\item The hypothesis that the degree one term $f_1$ in $F$ equals $p_1$ implies that $f_1(1)=1.$ 
 From equation (\ref{metaSym}) of Theorem \ref{metathm} and (\ref{metage2a}) of Theorem \ref{metage2}, we now have 
\begin{align*}
(1-p_1) H[F_{\geq 2}]&= E^{\pm} \cdot \prod_{m\geq 2} (1-p_m)^{-f_m(1)}\\
&=\sum_{r\geq 0} (-1)^r e_r\sum_{n\geq 0} g_n=\sum_{n\geq 0} \sum_{i=0}^n (-1)^{n-i} e_{n-i} g_i
=\sum_{n\geq 0} (-1)^n \sigma_n,
\end{align*}
from which the recurrence is clear.  It remains to establish the statement about $\tau_n.$ First observe that since $p_1$ does not appear in the power-sum expansion of $g_n,$ for $i\geq 1,$ $e_{n-i}g_i$ is the Frobenius characteristic of a zero-dimensional (hence virtual) representation (dimension is computed, for example,  by taking the scalar product with $p_1^n$).  The dimension of $\sigma_n$ is therefore that of $e_n,$ and is thus one.  To verify the statement about the restriction, we use the fact that the Frobenius characteristic of the restriction is the partial derivative $\dfrac{\partial \sigma}{\partial p_1}.$ The partial derivative of $e_n$ with respect to $p_1$ is clearly $e_{n-1}, n\geq 1,$ and that of $g_n$ with respect to $p_1$ is clearly 0.  The claim follows.

\item Again, $f_1=p_1$ implies $1=-f_1(-1).$ The argument is identical, but now use equation (\ref{metaExt}) and equation (\ref{metage2c}).
\end{enumerate}
\end{proof}

Note that, with $F$ as in Theorem \ref{metathm},  the dimension of the representation whose characteristic is $h_j[F]|_{{\rm deg\ }n} $ (respectively $e_j[F]|_{{\rm deg\ }n} $) is the number $c(n,j)$ of permutations in $S_n$ with $j$ disjoint cycles. 
Similarly  the dimension of the representation whose characteristic is $h_j[F_{\geq 2}]|_{{\rm deg\ }n} $ 
(respectively $e_j[F_{\geq 2}]|_{{\rm deg\ }n} $) is the number $d(n,j)$ of fixed-point-free permutations, or derangements, in $S_n$ with $j$ disjoint cycles, and hence the dimension of $\alpha_n $ (respectively $\beta_n$) is the total number of derangements $d_n.$ Hence, taking dimensions in either of the above recurrences, we recover the well-known recurrence $d_n=nd_{n-1} +(-1)^n, n\geq 2.$   On the other hand, the recurrence (\ref{Restrict2}) below is the symmetric function analogue of the recurrence 
$$d(n,j)=n(d(n-1,j)+d(n-2, j-1)),$$ while (\ref{Restrict1}) is the analogue of 
$$c(n,j)=c(n-1,j)+n c(n-1, j-1).$$

From Theorem \ref{metage2} we can also deduce interesting recurrences for the restrictions of the symmetric and exterior powers of $F$ from $S_n$ to $S_{n-1},$ for an arbitrary formal sum $F$ of homogeneous symmetric functions $f_n$ having the following key property: each $f_n$ is the Frobenius characteristic of an $S_n$-representation (possibly virtual) which restricts to the regular representation of $S_{n-1}.$ See also \cite[Proposition 3.5]{Su0}.  

\begin{thm}\label{Restrict}  Let $F=\sum_{n\geq 1} f_n.$  Assume $f_n$ is a symmetric function of homogeneous degree $n$ with the following property: $\dfrac{\partial}{\partial p_1} f_n=p_1^{n-1}, n\geq 1.$ 
\begin{enumerate}
\item Let ${G}^j_n$ equal either $h_j[F]|_{{\rm deg\ }n}$  or  $e_j[F]|_{{\rm deg\ }n}.$ Then for $n\geq 1$ and $0\leq j\leq n$ we have
\begin{equation} \label{Restrict1}
\dfrac{\partial}{\partial p_1} {G}^{n-j}_{n}={G}^{n-1-j}_{n-1}+p_1\dfrac{\partial}{\partial p_1} {G}^{n-1-(j-1)}_{n-1}. 
\end{equation}
\item 
 Let $\hat{G}^j_n$ equal either $h_v[F_{\geq 2}]|_{{\rm deg\ }n}$   or $e_j[F_{\geq 2}]|_{{\rm deg\ }n}.$ Then for $n\geq 2$ and $1\leq j\leq n-1,$  we have
\begin{equation}\label{Restrict2}
\dfrac{\partial}{\partial p_1} \hat{G}^{n-j}_n=p_1(\dfrac{\partial}{\partial p_1} \hat{G}^{(n-1)-(j-1)}_{n-1} +\hat{G}^{n-2-(j-1)}_{n-2}).
\end{equation}
\end{enumerate}
\end{thm}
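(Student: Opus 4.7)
The plan is to prove both recurrences simultaneously by differentiating the generating functions $H(v)[F]$, $E(v)[F]$, $H(v)[F_{\geq 2}]$, $E(v)[F_{\geq 2}]$ with respect to $p_1$ and then extracting the coefficient of $v^j$ at degree $n$. The key observation underpinning everything is a simple chain-rule formula for plethysm: since $g[F]$ is computed by substituting $p_k \mapsto p_k[F]$, and for $k\geq 2$ the symmetric function $p_k[F]$ involves only $p_m$ with $m\geq 2$, only the $k=1$ term survives in the chain rule. Thus for any symmetric function $g$ and any series $F$,
\[
\frac{\partial}{\partial p_1}\bigl(g[F]\bigr) \;=\; \Bigl(\frac{\partial g}{\partial p_1}\Bigr)[F]\cdot \frac{\partial F}{\partial p_1}.
\]

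For part (1), I would apply this with $g=H(v)$. The standard identity $\partial h_r/\partial p_1 = h_{r-1}$ gives $\partial H(v)/\partial p_1 = vH(v)$, while the hypothesis $\partial f_n/\partial p_1 = p_1^{n-1}$ yields $\partial F/\partial p_1 = \sum_{n\geq 1} p_1^{n-1} = (1-p_1)^{-1}$. Clearing the denominator,
\[
(1-p_1)\,\frac{\partial}{\partial p_1}\bigl(H(v)[F]\bigr) \;=\; v\, H(v)[F].
\]
Writing $H(v)[F]=\sum_{n,j} v^j G^j_n$ with $G^j_n=h_j[F]|_{\deg n}$, and matching the coefficient of $v^j$ in degree $n$ on both sides (noting that differentiation drops degree by one, while the $(1-p_1)$ factor mixes adjacent degrees), one obtains
\[
\frac{\partial G^j_n}{\partial p_1}\;=\; G^{j-1}_{n-1} + p_1\,\frac{\partial G^j_{n-1}}{\partial p_1},
\]
which is exactly the stated recurrence after the reindexing $j\mapsto n-j$. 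The case $G^j_n = e_j[F]|_{\deg n}$ is identical, using instead $\partial e_r/\partial p_1 = e_{r-1}$ to get $\partial E(v)/\partial p_1 = vE(v)$.

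For part (2), the same argument applies with $F$ replaced by $F_{\geq 2}$. The only change is that $\partial F_{\geq 2}/\partial p_1 = \sum_{n\geq 2} p_1^{n-1} = p_1/(1-p_1)$, so the identity becomes
\[
(1-p_1)\,\frac{\partial}{\partial p_1}\bigl(H(v)[F_{\geq 2}]\bigr) \;=\; v\,p_1\, H(v)[F_{\geq 2}],
\]
and similarly with $E(v)$ in place of $H(v)$. Extracting the coefficient of $v^j$ at degree $n$ now yields
\[
\frac{\partial \hat{G}^j_n}{\partial p_1}\;=\; p_1\Bigl(\frac{\partial \hat{G}^j_{n-1}}{\partial p_1} + \hat{G}^{j-1}_{n-2}\Bigr),
\]
which matches the claim after the reindexing $j\mapsto n-j$.

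The main obstacle is essentially notational: keeping track of the degrees and the index shifts when reading off coefficients of $v^j$ from an identity that mixes terms of adjacent degrees via the $(1-p_1)$ factor. Once the chain-rule identity is in hand and the two values $\partial F/\partial p_1=(1-p_1)^{-1}$, $\partial F_{\geq 2}/\partial p_1 = p_1/(1-p_1)$ are computed from the hypothesis, everything else is straightforward bookkeeping. As a sanity check, evaluating both recurrences at $p_i\mapsto \delta_{i,1}$ (which corresponds to taking dimensions) recovers the classical recurrences $c(n,j)=c(n-1,j)+nc(n-1,j-1)$ and $d(n,j)=n\bigl(d(n-1,j)+d(n-2,j-1)\bigr)$, exactly as indicated in the remark following the theorem.
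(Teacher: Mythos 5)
Your proof is correct, and for part (1) it follows the paper's argument exactly: differentiate $H(v)[F]$ (resp.\ $E(v)[F]$) via the plethystic chain rule, use $\partial F/\partial p_1=(1-p_1)^{-1}$, clear the denominator, and extract the coefficient of $v^{n-j}$ in degree $n-1$. For part (2) your route is genuinely a bit different, and cleaner: you apply the chain rule directly to $H(v)[F_{\geq 2}]$ using $\partial F_{\geq 2}/\partial p_1 = p_1(1-p_1)^{-1}$, arriving immediately at $(1-p_1)\frac{\partial}{\partial p_1}\bigl(H(v)[F_{\geq 2}]\bigr) = v\,p_1\,H(v)[F_{\geq 2}]$. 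The paper instead first invokes the factorisation $H(v)[F_{\geq 2}] = E^{\pm}(v)\cdot H(v)[F]$ from Theorem \ref{metage2}, differentiates that product (using $\partial E^{\pm}(v)/\partial p_1 = -vE^{\pm}(v)$ together with the part (1) computation), and rearranges to obtain the same identity. Your approach is more uniform with part (1) and avoids any appeal to Theorem \ref{metage2}; the paper's version has the virtue of re-using the already-established factorisation, though it is somewhat longer. Both are standard and give the same recurrence after extraction of coefficients.
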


\begin{proof}  The hypothesis about the $f_n$ implies that  derivative of $F$ with respect to $p_1$ is $\sum_{n\geq 1} p_1^{n-1}=(1-p_1)^{-1}.$ Also note that 
$$\dfrac{\partial}{\partial p_1}H(v)=vH(v), \qquad
\dfrac{\partial}{\partial p_1}E(v)=vE(v).$$
\begin{enumerate}
\item  The chain rule gives 
\begin{equation*}\dfrac{\partial}{\partial p_1}\left(H(v)[F]\right)=v\cdot H(v)[F]\cdot (1-p_1)^{-1}\Longrightarrow
v\cdot H(v)[F]=(1-p_1)\dfrac{\partial}{\partial p_1}\left(H(v)[F]\right);\qquad (A)\end{equation*}
 \begin{equation*}v\cdot E(v)[F]=(1-p_1)\dfrac{\partial}{\partial p_1}\left(E(v)[F]\right).\qquad(B) \end{equation*}
If ${G}^j_n=h_j[F]|_{{\rm deg\ }n}, $ then 
$H(v)[F]=\sum_{j\geq 0}\sum_{n\geq 0} G^j_n.$
The result follows by extracting the symmetric function of degree $n-1$ on each side of $(A)$, and the coefficient of $v^{n-j}.$ 
The  recurrence for $e_j[F]$ is identical in view of equation $(B)$; now use the expansion $E(v)[F]=\sum_{j\geq 0}\sum_{n\geq 0} G^j_n.$

\item  Now we use the identity (\ref{metage2a}) of Theorem \ref{metage2}.  We have 
$$H(v)[F_{\geq 2}]=E^{\pm}(v)\cdot H(v)[F].$$
Using the fact that $$\dfrac{\partial}{\partial p_1}E^{\pm}(v)=-v\cdot E^{\pm}(v),$$ and the chain rule, we obtain
\begin{align*} 
\dfrac{\partial}{\partial p_1} (H(v)[F_{\geq 2}]) &=-v\cdot E^{\pm}(v) \cdot H(v)[F_{\geq 2}]
+E^{\pm}\cdot \dfrac{\partial}{\partial p_1} (H(v)[F_{\geq 2}])\\
&= -v\cdot H(v)[F_{\geq 2}] +E^{\pm}\cdot v\cdot H(v)[F]\cdot (1-p_1)^{-1},
\end{align*}
where we have used the computation in (1).  It follows that 
$$(1-p_1)\dfrac{\partial}{\partial p_1} (H(v)[F_{\geq 2}])
= v\cdot p_1 (H(v)[F_{\geq 2}]), \text{ and hence}$$
\begin{equation*}\dfrac{\partial}{\partial p_1} (H(v)[F_{\geq 2}])
=p_1\left( \dfrac{\partial}{\partial p_1} (H(v)[F_{\geq 2}])+
v\cdot \dfrac{\partial}{\partial p_1} (H(v)[F_{\geq 2}])\right).\end{equation*}
 Let $H(v)[F_{\geq 2}]= \sum_{i\geq 0} \sum_{n\geq 1} v^j \hat{G}^j_n.$
The recurrence follows by extracting the symmetric function of degree $n-1$ on each side, and the coefficient of $v^{n-j}.$  
Similarly, in view of the identity (\ref{metage2c}) of Theorem \ref{metage2} and the fact that $$\dfrac{\partial}{\partial p_1}H^{\pm}(v)=-v\cdot H^{\pm}(v),$$
we obtain 
\begin{equation*}\dfrac{\partial}{\partial p_1} (E(v)[F_{\geq 2}])
=p_1\left( \dfrac{\partial}{\partial p_1} (E(v)[F_{\geq 2}])+
v\cdot \dfrac{\partial}{\partial p_1} (E(v)[F_{\geq 2}])\right).\end{equation*}
Hence it is clear that the same recurrence holds for $e_j[F_{\geq 2}].$
\end{enumerate}
\end{proof}

In particular, Theorem \ref{Restrict} applies to the family of representations whose characteristic $f_n$ is defined by equation (\ref{definef_n}), provided $\psi(1)=1.$ The latter condition guarantees that each $f_n$ restricts to the regular representation of $S_{n-1}.$ 

In the recent paper \cite{HR}, Hersh and Reiner derive several identities and recurrences for what are essentially the symmetric and exterior powers of $Lie.$  The connection between the work of \cite{HR} and the specialisation of our results to $F=Lie$, is the well-known fact ( see \cite{LS}, \cite{Su1}) that the exterior power of $Lie,$ when tensored with the sign representation, coincides with the Whitney homology of the lattice of set partitions.  Here we record the conclusions for the special setting of $F=Lie$.   In the notation of \cite{HR}, we have ${\rm ch\ }\widehat{Lie}^i_n=h_{n-i}[Lie_{\geq 2}]\vert_{{\rm deg\ }n},$ 
while ${\rm ch\ }\omega(\widehat{W}^i_n)=e_{n-i}[Lie_{\geq 2}]\vert_{{\rm deg\ }n}.$  Also let $\widehat{Lie}_n=\sum_{i\geq 0}^{n-1}\widehat{Lie}^i_n,$ and $\widehat{W}_n=\sum_{i\geq 0}^{n-1}\widehat{W}^i_n.$

\begin{cor}\label{HRRepStab} (The case $F=Lie$)
\begin{enumerate}
\item (\cite[Theorem 1.7]{HR})  
$\sum_{i\geq 0}(-1)^i {\rm ch\ }\omega(\widehat{W}^i_n)
=(-1)^{n-1} s_{(2, 1^{n-2})}.$
\item (\cite[Theorem 1.2]{HR})  

 $ {\rm ch\ }\widehat{Lie}_n=
(H-1)[Lie_{\geq 2}]|_{{\rm deg\ }n}= p_1 \cdot  {\rm ch\ }\widehat{Lie}_{n-1}+(-1)^n e_n, 
$

$ {\rm ch\ }\widehat{W}_n=
(E-1)[Lie_{\geq 2}]|_{{\rm deg\ }n}= p_1 \cdot {\rm ch\ }\widehat{W}_{n-1} +(-1)^n \tau_n , $

where $\tau_n= s_{(n-2, 1^2)}-s_{(n-2,2)},n \geq 4,$ and $\tau_3=s_{(n-2, 1^2)}.$
 
 \item  (\cite[Theorem 1.4]{HR}) $$\dfrac{\partial}{\partial p_1} {\rm ch\ }\widehat{Lie}^j_n=p_1(\dfrac{\partial}{\partial p_1} {\rm ch\ }\widehat{Lie}^{(j-1)}_{n-1} +{\rm ch\ }\widehat{Lie}^{(j-1)}_{n-2}),$$
 $$\dfrac{\partial}{\partial p_1} {\rm ch\ }\widehat{W}^j_n=p_1(\dfrac{\partial}{\partial p_1} {\rm ch\ }\widehat{W}^{(j-1)}_{n-1} +{\rm ch\ }\widehat{W}^{(j-1)}_{n-2}).$$
\end{enumerate}
\end{cor}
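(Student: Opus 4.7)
The plan is to obtain all three parts by specializing the general machinery of this section to $F = Lie$ (equivalently, to $\psi(d) = \mu(d)$ in \eqref{definef_n}). Three arithmetic observations about the $Lie$ sequence do essentially all the work. First, by the M\"obius identity $\sum_{d\mid n}\mu(d) = \delta_{n,1}$, we have $Lie_n(1) = 0$ for every $n \geq 2$ and $Lie_1(1) = 1$. Second, Proposition \ref{metaf} then forces $Lie_n(-1) = 0$ for every $n \geq 3$, while $Lie_1(-1) = -1$ and $Lie_2(-1) = Lie_1(1) - Lie_2(1) = 1$. Third, only the $d=1$ summand of $Lie_n = \tfrac{1}{n}\sum_{d\mid n}\mu(d)\,p_d^{n/d}$ contains $p_1$, so $\partial Lie_n/\partial p_1 = p_1^{n-1}$, which is precisely the hypothesis needed in Theorem \ref{Restrict}.

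Part (1) is then a direct restatement of \eqref{EquivPBWAltExtge2b} of Theorem \ref{EquivPBW}, after the change of index $r = n - i$ and (to match the sign-tensoring convention that makes $\widehat{W}^i_n$ into Whitney homology $W\!H^i_{\geq 2}(\Pi_n)$, as discussed earlier in the paper) applying $\omega$ to pass between $s_{(n-1,1)}$ and its conjugate $s_{(2,1^{n-2})}$. No ingredient beyond Theorem \ref{EquivPBW}, which we have already obtained from Thrall's theorem, is required.

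For part (2), I would apply Theorem \ref{Restrictge2} to $F = Lie$. For the $\widehat{Lie}$ recurrence, the first observation above yields $\prod_{m\geq 2}(1-p_m)^{-Lie_m(1)} = 1$, hence $g_i = 0$ for $i \geq 1$, so $\sigma_n$ collapses to $e_n$ and Theorem \ref{Restrictge2}(1) delivers exactly $\alpha_n = p_1 \alpha_{n-1} + (-1)^n e_n$. For the $\widehat{W}$ recurrence, the second observation yields $\prod_{m\geq 2}(1-p_m)^{Lie_m(-1)} = 1 - p_2$, so $k_0 = 1$, $k_2 = -p_2$, and $k_i = 0$ otherwise, producing $\tau_n = h_n - h_{n-2}p_2$. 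Using $p_2 = s_{(2)} - s_{(1^2)}$ together with the Pieri expansions $h_{n-2}h_2 = s_{(n)} + s_{(n-1,1)} + s_{(n-2,2)}$ and $h_{n-2}e_2 = s_{(n-1,1)} + s_{(n-2,1^2)}$, the $s_{(n)}$ and $s_{(n-1,1)}$ contributions cancel, leaving $\tau_n = s_{(n-2,1^2)} - s_{(n-2,2)}$ for $n \geq 4$. The degenerate case $n = 3$ must be handled separately, since $s_{(n-2,2)} = s_{(1,2)}$ is not a valid Schur function; the same Pieri computation, with that term absent, collapses to $\tau_3 = s_{(1^3)} = s_{(n-2,1^2)}$, as claimed.

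Part (3) follows verbatim from Theorem \ref{Restrict}(2) thanks to the third observation above, with no further calculation. The only step that is not a one-line substitution into a general theorem already proved is the Pieri simplification producing the two-term expression for $\tau_n$ in part (2); this, together with the small-$n$ bookkeeping, is where I expect the (still routine) effort to lie.
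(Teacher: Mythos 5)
Your argument is correct and follows the same route as the paper: part (1) is read off from equation \eqref{EquivPBWAltExtge2b} after applying $\omega$; part (2) is Theorem \ref{Restrictge2} specialised to $F=Lie$, using $Lie_m(1)=0$ for $m\geq 2$ (so $g_i=0$, $\sigma_n=e_n$) and $Lie_2(-1)=1$, $Lie_m(-1)=0$ for $m\geq 3$ (so $k_2=-p_2$, $\tau_n=h_n-h_{n-2}p_2$), together with the Pieri computation you supply; and part (3) is Theorem \ref{Restrict}(2), which applies because $\partial Lie_n/\partial p_1=p_1^{n-1}$. Your three preliminary observations and the $\tau_n$ simplification (including the $n=3$ degenerate case) are exactly the ingredients the paper uses.
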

\begin{proof} Clearly (1) is just equation (\ref{EquivPBWAltExtge2b}) tensored with the sign.

For (2),  apply Theorem \ref{Restrictge2}  to $F=Lie=\sum_{n\geq 1} Lie_n.$  
It is clear that in this case we have $g_n=0, n\geq 2,$ and $k_2=-p_2, k_n=0, n\geq 3.$ Hence Theorem \ref{Restrictge2} gives the following:
  \begin{equation*}
(H-1)[Lie_{\geq 2}]|_{{\rm deg\ }n}= p_1 \cdot (H-1)[Lie_{\geq 2}]|_{{\rm deg\ }n-1} +(-1)^n e_n 
\end{equation*}
and 
\begin{equation*}
(E-1)[Lie_{\geq 2}]|_{{\rm deg\ }n}= p_1 \cdot (E-1)[Lie_{\geq 2}]|_{{\rm deg\ }n-1} +(-1)^n \tau_n , 
\end{equation*}
where $\tau_n=h_n-h_{n-2} p_2 = s_{(n-2, 1^2)}-s_{(n-2,2)},n \geq 4,$ and $\tau_3=s_{(n-2, 1^2)}.$

Part (3) is immediate from Theorem \ref{Restrict}, which applies since it is well known that $Lie_n$ restricts to the regular representation of $S_{n-1}.$  When $F=Lie_n,$ the functions $G_n^j$ become ${\rm ch\ }\widehat{Lie}_n^{n-j}$ when applied to the symmetric powers $H$, and $\omega({\rm ch\ }\widehat{W}_n^{n-j})$  when applied to the exterior powers $E$.
\end{proof}

  Hersh and Reiner use the second recurrence in (2) above  to establish a remarkable formula for the decomposition into irreducibles for  $(E-1)[Lie_{\geq 2}]|_{{\rm deg\ }n},$ in terms of certain standard Young tableaux that they call {\it Whitney-generating tableaux} \cite[Theorem 1.3]{HR}.   The decomposition into irreducibles of $(H-1)[Lie_{\geq 2}]|_{{\rm deg\ }n}$ is similarly given as a sum of {\it desarrangement  tableaux} \cite[Section 7]{HR}. This was established from the first recurrence in (2) above, for the sign-tensored version, in \cite[Proposition 2.3]{RW}. 


\begin{thebibliography}{<99>}

\bibitem{C} C.~Cadogan, \textit{ The M\"obius function and connected graphs}, J. Combinatorial Theory (B) \textbf{11} (1971), 193-200 (3).


\bibitem{F} H.~O.~Foulkes,  \textit{ Characters of symmetric groups induced by characters of cyclic subgroups}, Proc. Conf. on Combinatorial Math., Oxford  (D.J.A.~Welsh and D.R. Woodall, eds.), The Institute for Mathematics and its Applications, Southend-on-Sea, Essex (1972), 141-154. 

\bibitem{GS} M.~Gerstenhaber and S.~D.~ Schack, \textit{A  Hodge-type decomposition for commutative algebra cohomology}, J. Pure Appl. Algebra \textbf{48} (1987), no. 3, 229--247.

\bibitem{GRR} I.~M.~Gessel, A.~ Restivo, C.~Reutenauer, 
\textit{A bijection between words and multisets of necklaces}, European J. Combin. {\textbf 33} (2012), no. 7, 1537--1546.

\bibitem{Ha}  Phil ~Hanlon,\textit{The action of $S_n$ on the components of the Hodge decomposition of Hochschild homology}, Michigan Math. J. \textbf{37} (1990), no. 1, 105--124.

\bibitem{HH} P.~Hanlon and P.~Hersh, \textit{A Hodge decomposition for the complex of injective words}, Pacific J. Math. \textbf{214} (2004), 109-125.

\bibitem{HR} P.~Hersh and V.~ Reiner, \textit{Representation stability for cohomology of configuration spaces in $R^{\bf d}$}, Int. Math. Res. Not. \textbf{5} (2017), 1433-1486.

\bibitem{HL} T.~Hyde and J.~Lagarias,  \textit{Polynomial splitting measures and cohomology of the pure braid group}, Arnold Math. J. \textbf{3}  (2017) 219-249.

\bibitem{KW} W.~Kr\'askiewicz and J.~Weyman, \textit{Algebra of coinvariants and the action of a Coxeter element}, Bayreuth Math. Schr. \textbf{63} (2001), 265--284.

\bibitem{Le} G.~I.~Lehrer,  \textit{ On the Poincar\'e series associated with Coxeter group actions on complements of hyperplanes},
J. Lond. Math. Soc. \textbf{ 36 (2)} (1987), 275-294.

\bibitem{LS} G.~I.~Lehrer and L.~Solomon, \textit{ On the action of the symmetric group on the cohomology of the complement of its reflecting hyperplanes},
J. Algebra  \textbf{ 104 }(1986), no. 2, 410-424.


\bibitem{M}  I.~G.~Macdonald, \textit{Symmetric functions and 
Hall polynomials}, Second Edition, Oxford University Press (1995).

\bibitem{RW} V.~Reiner and P.~Webb, \textit{The combinatorics of the bar resolution in group cohomology}, J.
Pure Appl. Algebra \textbf{190} (2004), 291-327.

\bibitem{R} C.~Reutenauer, \textit{Free Lie Algebras}, London Mathematical Society Monographs, Oxford Science Publications (1993).

\bibitem{RWh} A.~Robinson and S.~Whitehouse, \textit{The tree representation of $\Sigma_{n+1}$}, J. Pure Appl. Alg. \textbf{ 111} (1996), 245-253.

\bibitem{So}  L.~Solomon,
\textit{On the sum of the elements in the character table of a finite group}, Proc. Amer. Math. Soc. \textbf{12} No. 6 (1961), 
962--963.

\bibitem{St4EC2} R.~P.~Stanley, {Enumerative 
Combinatorics, Vol.2}, Cambridge Studies in Advanced Mathematics 62, Cambridge University Press, Cambridge, 1999.

\bibitem{Su0} S.~Sundaram, \textit{The homology representations of the symmetric group on Cohen-Macaulay subposets of the partition lattice}, Adv. in Math. \textbf{104} (2)(1994), 225--296.

\bibitem{Su1} S.~Sundaram, \textit{The conjugacy action of $S_n$ and modules induced from centralisers},  J. Algebraic Combin. 48 (2018), no. 2, 179-–225. DOI 10.1007/s10801-017-0796-9

\bibitem{SuFPSAC2018} S.~Sundaram, \textit{On a variant of $Lie_n$}, Extended Abstract for FPSAC 2018,  S\'{e}m. Lothar. Combin. 80B (2018), Art. 19, 12 pp.

\bibitem{SuFPSAC2019} S.~Sundaram, \textit{On the Schur positivity of sums of power sums}, Extended Abstract for FPSAC 2019,  S\'{e}m. Lothar. Combin. 82B (2019), Art. 49, 12 pp. 

\bibitem{SW} S.~Sundaram and V.~Welker, \textit{Group actions on arrangements of linear subspaces and applications to configuration spaces}, Trans. Amer. Math. Soc. \textbf{ 349} (4) (1997), 1389-1420.

\bibitem{T} R.~M.~Thrall, \textit{On symmetrized Kronecker powers and the structure of the free Lie ring}, Amer. J. Math. \textbf{ 64} (1942), 371-388. 

\bibitem{Wh} S.~Whitehouse, {The integral tree representation of the symmetric group}, J. Alg. Comb. {\textbf 13} (2001), 317-326.


\end{thebibliography}

\bibliographystyle{amsplain.bst}

\end{document}